\documentclass{amsart}
\usepackage{amsmath,amssymb,amsthm,bm,bbm}
\usepackage{amscd}
\usepackage{graphicx,enumerate}
\usepackage{multirow}
\usepackage{layout}
\usepackage[OT2,T1]{fontenc}
\DeclareSymbolFont{cyrletters}{OT2}{wncyr}{m}{n}
\DeclareMathSymbol{\Sha}{\mathalpha}{cyrletters}{"58}
\usepackage[OT2,OT1]{fontenc}
\newcommand\cyr{\renewcommand\rmdefault{wncyr}
\renewcommand\sfdefault{wncyss}
\renewcommand\encodingdefault{OT2}
\normalfont\selectfont}
\DeclareTextFontCommand{\textcyr}{\cyr}

\usepackage{url}

\setlength{\topmargin}{-50pt}
\setlength{\oddsidemargin}{-20pt}
\setlength{\evensidemargin}{-20pt}
\setlength{\textwidth}{490pt}
\setlength{\textheight}{725pt}

\theoremstyle{plain}
\newtheorem{theorem}{Theorem}[section]
\newtheorem*{theorem-nn}{Theorem}
\newtheorem{lemma}[theorem]{Lemma}
\newtheorem{proposition}[theorem]{Proposition}
\newtheorem*{proposition-nn}{Proposition}
\newtheorem{corollary}[theorem]{Corollary}

\theoremstyle{definition}
\newtheorem{definition}[theorem]{Definition}
\newtheorem{example}[theorem]{Example}
\newtheorem{remark}[theorem]{Remark}

\newtheorem*{acknowledgment}{Acknowledgment}
\theoremstyle{remark}

\newcommand{\bZ}{\mathbbm{Z}}\newcommand{\bQ}{\mathbbm{Q}}
\newcommand{\bC}{\mathbbm{C}}
\newcommand{\bG}{\mathbbm{G}}\newcommand{\bF}{\mathbbm{F}}
\newcommand{\bA}{\mathbbm{A}}

\newcommand{\GL}{{\rm GL}}
\newcommand{\PSL}{{\rm PSL}}

%%\numberwithin{equation}{section}

\title{Norm one tori and Hasse norm principle, III: Degree $16$ case}
%\author{Akinari Hoshi}
%\date{}

\author[A. Hoshi]{Akinari Hoshi}
\address{Department of Mathematics, Niigata University, Niigata 950-2181, Japan}
\email{hoshi@math.sc.niigata-u.ac.jp}

\author[K. Kanai]{Kazuki Kanai}
\address{General Education Program, National Institute of Technology, Kure College, Hiroshima 737-8506, Japan}
\email{k-kanai@kure-nct.ac.jp}

\author[A. Yamasaki]{Aiichi Yamasaki}
\address{Department of Mathematics, Kyoto University, Kyoto 606-8502, Japan}
\email{aiichi.yamasaki@gmail.com}

\thanks{{\it Key words and phrases.} 
Algebraic tori, norm one tori, Hasse norm principle, weak approximation, Tamagawa number.\\ 
% rationality problem.\\ %, flabby resolution.\\
%stably rational, retract rational, 
This work was partially supported by JSPS KAKENHI Grant Numbers 
19K03418, 20H00115, 20K0351, 24K00519, 24K06647.
}

\subjclass[2010]{Primary 11E72, 12F20, 13A50, 14E08, 20C10, 20G15.}
%%11-XX:Number theory
%%11Exx:Forms and linear algebraic groups
%%11E72 Galois cohomology of linear algebraic groups
%%12-XX:Field theory and polynomials
%%12Fxx:Field extensions
%%12F20 Transcendental extensions
%%13-XX:Commutative algebra
%%13Axx:General commutative ring theory
%%13A50 Actions of groups on commutative rings; invariant theory
%%14-XX:Algebraic geometry
%%14Exx:Birational Geometry
%%14E08 Rationality questions 
%%20-XX Group theory and generalizations
%%20Cxx Representation theory of groups
%%20C10 Integral representations of finite groups
%%20Gxx:Group theory and generalizations
%%20G15 Linear algebraic groups over arbitrary fields

%%%%%%    TEXT START    %%%%%%

\begin{document}
\maketitle
\begin{abstract}
Let $k$ be a field, $T$ be an algebraic $k$-torus, 
$X$ be a smooth $k$-compactification of $T$ 
and ${\rm Pic}\,\overline{X}$ be the Picard group of 
$\overline{X}=X\times_k\overline{k}$ where $\overline{k}$ 
is a fixed separable closure of $k$.  
Hoshi, Kanai and Yamasaki \cite{HKY22}, \cite{HKY23} determined 
$H^1(k,{\rm Pic}\, \overline{X})$ 
for norm one tori $T=R^{(1)}_{K/k}(\bG_m)$ 
and gave 
a necessary and sufficient condition for the Hasse norm principle for 
extensions $K/k$ of number fields with $[K:k]\leq 15$. 
In this paper, we treat the case where $[K:k]=16$. 
Among $1954$ transitive subgroups 
$G=16Tm\leq S_{16}$ $(1\leq m\leq 1954)$ up to conjugacy, 
we determine $1101$ (resp. $774$, $31$, $37$, $1$, $1$, $9$) cases 
with $H^1(k,{\rm Pic}\, \overline{X})=0$ (resp. $\bZ/2\bZ$, 
$(\bZ/2\bZ)^{\oplus 2}$, $(\bZ/2\bZ)^{\oplus 3}$, $(\bZ/2\bZ)^{\oplus 4}$, 
$(\bZ/2\bZ)^{\oplus 6}$, $\bZ/4\bZ$) 
where $G$ is the Galois group of the Galois closure $L/k$ of $K/k$. 
We see that $H^1(k,{\rm Pic}\, \overline{X})=0$ 
implies that the Hasse norm principle holds for $K/k$. 
In particular, among $22$ primitive $G=16Tm$ cases, i.e. 
$H\leq G=16Tm$ is maximal with $[G:H]=16$, 
we determine exactly 
$6$ cases $(m=178, 708, 1080, 1329, 1654, 1753)$ 
with $H^1(k,{\rm Pic}\, \overline{X})\neq 0$ $($$(\bZ/2\bZ)^{\oplus 2}$, 
$\bZ/2\bZ$, $(\bZ/2\bZ)^{\oplus 2}$, $\bZ/2\bZ$, $\bZ/2\bZ$,  $\bZ/2\bZ$). 
Moreover, we give a necessary and sufficient condition for the Hasse norm principle 
for $K/k$ with $[K:k]=16$ for $22$ primitive $G=16Tm$ cases. 
As a consequence of the $22$ primitive $G$ cases,  
we get the Tamagawa number $\tau(T)=1$, $1/2$, $1/4$ of 
$T=R^{(1)}_{K/k}(\bG_m)$ over a number field $k$ via Ono's formula 
$\tau(T)=1/|\Sha(T)|$ where $\Sha(T)$ is the Shafarevich-Tate group of $T$. 
\end{abstract}
\tableofcontents
%
%%%%%%%%%%%%%%%%%%%%%%%%%%%%%%%%%%%%%%%%%%%%%
\section{Introduction}\label{S1}
Let $k$ be a field, 
$K/k$ be a separable field extension of degree $n$ 
and $L/k$ be the Galois closure of $K/k$ with $G={\rm Gal}(L/k)$. 
Let $H={\rm Gal}(L/K)\leq G$ with $[G:H]=n$. 
Let $nTm$ be the $m$-th transitive subgroup of the symmetric group 
$S_n$ of degree $n$ up to conjugacy 
(see Butler and McKay \cite{BM83}, \cite{GAP}). 
The Galois group $G$ may be regarded as a transitive subgroup 
$nTm\leq S_n$ via an injection $G\to S_n$ 
which is derived from the action of $G$ on the left cosets 
$\{g_1H,\ldots,g_nH\}$ by $g(g_iH)=(gg_i)H$ for any $g\in G$. 
We may assume that 
$H$ is the stabilizer of one of the letters in $G$, 
i.e. $L=k(\theta_1,\ldots,\theta_n)$ and $K=k(\theta_i)$ for some 
$1\leq i\leq n$. 
We just take $K=k(\theta_1)$ for the simplicity. 
Let $S_n$ (resp. $A_n$, $D_n$, $C_n$) be the symmetric 
(resp. the alternating, the dihedral, the cyclic) group 
of degree $n$ of order $n!$ (resp. $n!/2$, $2n$, $n$). 
Let $V_4\simeq C_2\times C_2$ be the Klein four group. 

Voskresenskii \cite{Vos67} proved that 
all the $2$-dimensional algebraic $k$-tori are $k$-rational. 
This implies that 
$H^1(k,{\rm Pic}\,\overline{X})\simeq A(T)\simeq \Sha(T)=0$ 
where $X$ is a smooth $k$-compactification of an algebraic $k$-torus $T$, 
i.e. smooth projective $k$-variety $X$ 
containing $T$ as a dense open subvariety, 
${\rm Pic}\,\overline{X}$ is the Picard group of 
$\overline{X}=X\times_k\overline{k}$ over a fixed separable closure $\overline{k}$ of $k$,  
$A(T)$ is the kernel of the weak approximation of $T$ 
and $\Sha(T)$ is the Shafarevich-Tate group of $T$ 
(see Section \ref{S3} and also Manin \cite[\S 30]{Man86}). 
Kunyavskii \cite{Kun84} showed that, among $73$ cases of 
$3$-dimensional $k$-tori $T$, 
there exist exactly $2$ cases with 
$H^1(k,{\rm Pic}\,\overline{X})\neq 0$ 
which are of special type called norm one tori. 
He also proved that 
%, among $73$ cases, 
there exist $58$ (resp. $15$) stably $k$-rational 
(resp. not retract $k$-rational) cases (see also Hoshi and Yamasaki \cite[Theorem 1.2, Example 5.3]{HY17}). 
For the stably/retract rational classification of algebraic $k$-tori of dimensions $4$ and $5$, 
see Hoshi and Yamasaki \cite{HY17}. 
Hoshi, Kanai and Yamasaki \cite[Theorem 1.15]{HKY22}, \cite[Theorem 1.1]{HKY23} determined 
$H^1(k,{\rm Pic}\, \overline{X})$ 
for norm one tori $T=R^{(1)}_{K/k}(\bG_m)$ 
with $n=[K:k]\leq 15$. 
%Note that there exist $2$ (resp. $5$, $5$, $16$, $7$, $50$, $34$, $45$, $8$, 
%$301$, $9$, $63$, $104$) transitive subgroups $nTm$ of $S_n$ up to conjugacy 
%for $n=3$ (resp. $4$, $5$, $6$, $7$, $8$, $9$, $10$, $11$, $12$, 
%$13$, $14$, $15$) 
%(see Butler and McKay \cite{BM83} for $n\leq 11$, 
%Royle \cite{Roy87} for $n=12$, 
%Butler \cite{But93} for $n=14,15$ and \cite{GAP}). 

For $n=16$, there exist exactly $1954$ transitive subgroups $16Tm\leq S_{16}$ 
$(1\leq m\leq 1954)$ up to conjugacy (see Hulpke \cite[Tabelle 1]{Hul96}, \cite[Table 1]{Hul05}, \cite{GAP}) 
and we get $H^1(k,{\rm Pic}\, \overline{X})$ as follows: 
\begin{theorem}\label{thmain1}
Let $k$ be a field, 
$K/k$ be a separable field extension of degree $16$ 
and $L/k$ be the Galois closure of $K/k$. 
Assume that $G={\rm Gal}(L/k)=16Tm$ $(1\leq m\leq 1954)$ 
is a transitive subgroup of $S_{16}$ 
and $H={\rm Gal}(L/K)$ with $[G:H]=16$. 
Let $T=R^{(1)}_{K/k}(\bG_m)$ be the norm one torus of $K/k$ of dimension $15$ 
and $X$ be a smooth $k$-compactification of $T$. 
Then 
\begin{align*}
H^1(k,{\rm Pic}\, \overline{X})=
\begin{cases}
\bZ/2\bZ & {\rm if}\ m\ {\rm is\ given\ as\ in\ Table}\ 1\textrm{-}1\ (774\ {\rm cases)},\\
(\bZ/2\bZ)^{\oplus 2} & {\rm if}\  m=7, 10, 11, 46, 58, 61, 73, 76, 82, 87, 89, 107, 113, 118, 120, 128, 129, 138, \\ 
& \hspace*{11mm} 142, 162, 164, 165, 178, 183, 206, 297, 308, 319, 414, 731, 1080\ (31\ {\rm cases)},\\
(\bZ/2\bZ)^{\oplus 3} & {\rm if}\ m=2, 9, 18, 20, 23, 25, 67, 69, 83, 92, 98, 101, 127, 173, 197, 202, 212, 241,\\ 
& \hspace*{11mm}  246, 270, 295, 301, 313, 358, 372, 440, 463, 466, 604, 632, 649, 656,\\
& \hspace*{11mm}  794, 801, 1082, 1187, 1378\ (37\ {\rm cases)},\\
(\bZ/2\bZ)^{\oplus 4} & {\rm if}\ m=64\ (1\ {\rm case),}\\
(\bZ/2\bZ)^{\oplus 6} & {\rm if}\ m=3\ (1\ {\rm case)},\\
\bZ/4\bZ & {\rm if}\ m=4, 51, 63, 143, 185, 323, 375, 430, 769\  (9\ {\rm cases)},\\
0 & {\rm otherwise}\ (1101\ {\rm cases)}. 
\end{cases}
\end{align*} 
In particular, if $H^1(k,{\rm Pic}\, \overline{X})=0$, then 
we get the vanishing 
$H^1(k,{\rm Pic}\, \overline{X})\simeq H^1(G,{\rm Pic}\, X_L)\simeq 
%H^1(G,F)\simeq 
\Sha^2_\omega(G,J_{G/H})\simeq {\rm Br}(X)/{\rm Br}(k)\simeq 
{\rm Br}_{\rm nr}(k(X)/k)/{\rm Br}(k)=0$. 
%where $X$ is a smooth $k$-compactification of $T$. 
%Moreover, 
This implies that, 
when $k$ is a global field, i.e. a finite extension of $\bQ$ or $\bF_q(t)$, 
$A(T)=0$ and $\Sha(T)=0$, i.e. $T$ has the weak approximation property, 
Hasse principle holds for all torsors $E$ under $T$ and 
the Hasse norm principle holds for $K/k$ 
$($see Section \ref{S3}, 
Hoshi, Kanai and Yamasaki \cite[Section 1]{HKY22}, \cite[Section 1]{HKY23},  
Hoshi and Yamasaki \cite[Section 4]{HY1}$)$. 
Moreover, if $k$ is a global field and %number field and 
$L/k$ is an unramified extension, then $A(T)=0$ and 
$H^1(k,{\rm Pic}\,\overline{X})\simeq \Sha(T)$. 
%$($The six {\bf bold} ones are primitive groups $G=16Tm$ with $H^1(k,{\rm Pic}\, \overline{X})\neq 0$, see {\rm Table} $1$-$2$.$)$ 
\end{theorem}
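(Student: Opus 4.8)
The plan is to reduce the statement about $H^1(k,{\rm Pic}\,\overline{X})$ for the $15$-dimensional norm one torus $T=R^{(1)}_{K/k}(\bG_m)$ to a purely group-theoretic computation in the $1954$ transitive groups $G=16Tm\leq S_{16}$, following the strategy already used in \cite{HKY22}, \cite{HKY23} for $n\leq 15$. The key isomorphism to invoke is the chain
\begin{align*}
H^1(k,{\rm Pic}\,\overline{X})\simeq H^1(G,{\rm Pic}\,X_L)\simeq \Sha^2_\omega(G,J_{G/H}),
\end{align*}
where $J_{G/H}$ is the $G$-lattice defined by the exact sequence $0\to \bZ\to \bZ[G/H]\to J_{G/H}\to 0$ (the cocharacter lattice of $T$), and $\Sha^2_\omega$ denotes the subgroup of $H^2$ consisting of classes that vanish after restriction to every \emph{cyclic} subgroup of $G$. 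This reduction is field-independent and valid for any separable extension, so the whole theorem becomes: compute $\Sha^2_\omega(G,J_{G/H})$ for each of the $1954$ conjugacy classes of transitive $G\leq S_{16}$ with $H$ a point stabilizer.

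First I would recall the standard homological machinery for this invariant. By the Tate--Nakayama-type duality one has $\Sha^2_\omega(G,J_{G/H})\simeq \Sha^1_\omega(G,[J_{G/H}]^\circ)$ up to the usual dualities, but in practice the effective route — the one used in the earlier papers and implemented in GAP — is via a flasque resolution $0\to J_{G/H}^\circ\to P\to F\to 0$ with $P$ permutation and $F$ flasque (or coflasque), whereby $H^1(k,{\rm Pic}\,\overline{X})\simeq H^1(G,F)$, combined with the Endo--Miyata / Colliot-Th\'el\`ene--Sansuc description. One then computes $\Sha^2_\omega(G,J_{G/H})$ by the formula expressing it as a quotient of $\bigcap_{g\in G}\ker({\rm res}^G_{\langle g\rangle})$ inside $H^2(G,J_{G/H})$, each $H^2(\langle g\rangle,-)$ and the restriction maps being computable since cyclic cohomology is periodic. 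Equivalently, using $H^2(G,J_{G/H})\simeq H^1(G,\bZ[G/H]/\bZ)$-type shifts and the known presentation, this is a finite linear-algebra problem over $\bZ$ for each $G$.

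The actual computation I would carry out with GAP (\cite{GAP}), using the transitive groups library \cite{Hul96}, \cite{Hul05} and the routines developed for \cite{HY17}, \cite{HKY22}, \cite{HKY23}: for each $m=1,\ldots,1954$, construct $G=16Tm$ and the point stabilizer $H$, form the integral representation on $J_{G/H}$, enumerate conjugacy classes of cyclic subgroups $\langle g\rangle\leq G$, compute the restriction maps on $H^2$, take the intersection of their kernels, and then quotient by the image of $H^2$ of the "obviously split" part to isolate $\Sha^2_\omega$. Sorting the $1954$ outputs then yields the tabulated distribution ($1101$ zeros, $774$ copies of $\bZ/2\bZ$, $31$ of $(\bZ/2\bZ)^{\oplus 2}$, $37$ of $(\bZ/2\bZ)^{\oplus 3}$, one $(\bZ/2\bZ)^{\oplus 4}$ at $m=64$, one $(\bZ/2\bZ)^{\oplus 6}$ at $m=3$, and nine $\bZ/4\bZ$ cases). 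The final clause — that $H^1(k,{\rm Pic}\,\overline{X})=0$ forces vanishing of all of ${\rm Br}(X)/{\rm Br}(k)$, $\Sha^2_\omega(G,J_{G/H})$, ${\rm Br}_{\rm nr}(k(X)/k)/{\rm Br}(k)$ simultaneously, and hence (over a global field) $A(T)=0$, $\Sha(T)=0$, and the Hasse norm principle for $K/k$ — is then immediate from the isomorphism chain together with the results of Colliot-Th\'el\`ene--Sansuc and Voskresenskii recalled in Section~\ref{S3}.

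The main obstacle is purely computational scale rather than conceptual: $1954$ groups of degree $16$, several of them of substantial order (the regular representations of groups of order $16$ through the large imprimitive wreath-type groups of order up to $16!/\,\cdot$), pushing the integral cohomology and kernel-intersection computations to the edge of feasibility. The practical difficulties will be (i) handling the largest $G$ — e.g. those built as wreath products $G_1\wr G_2$ acting on $16$ points — where the lattice $J_{G/H}$ has rank $15$ but $|G|$ is huge, which I would mitigate by exploiting wreath-product and induced-module structure to reduce to smaller blocks; and (ii) certifying correctness across all $1954$ cases, for which I would cross-check against the $n\leq 15$ results where overlapping subgroup structures occur, verify internal consistency (e.g. that primitive cases agree with the separately-stated list $m=178,708,1080,1329,1654,1753$ of nonzero $H^1$ among the $22$ primitive groups), and confirm that every nonzero group is elementary $2$-abelian except the nine $\bZ/4\bZ$ exceptions, which is the kind of structural regularity one expects and which serves as a useful sanity check.
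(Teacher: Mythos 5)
Your overall strategy coincides with the paper's: identify $H^1(k,{\rm Pic}\,\overline{X})\simeq H^1(G,{\rm Pic}\,X_L)\simeq H^1(G,[J_{G/H}]^{fl})\simeq\Sha^2_\omega(G,J_{G/H})$, compute this group-theoretic invariant with GAP over the $1954$ transitive groups $16Tm$, and deduce the final clauses from Voskresenskii's exact sequence and Ono's theorem. However, there is a genuine gap in how you propose to make the computation go through. Your plan is a direct case-by-case computation for all $1954$ groups, with the largest cases to be ``mitigated by exploiting wreath-product and induced-module structure.'' This does not work for the groups that are actually problematic: $16T1953\simeq A_{16}$ and $16T1954\simeq S_{16}$ (and more generally the big primitive groups) are not wreath products, and the paper reports explicitly that even its optimized routine {\tt FlabbyResolutionNorm1TorusJ} could not finish on $A_{16}$ and $S_{16}$ for time/memory reasons. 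The paper's proof hinges on a reduction you do not have: since $16=2^4$, Lemma \ref{lem4.1} shows ${\rm Syl}_2(G)$ is again a transitive subgroup of $S_{16}$ (hence some $16Ti$ which is a $2$-group), and Lemma \ref{lem4.2} (restriction--corestriction through $H$, using $[F|_H]=0$, so $H^1(G,F)$ is $2$-primary and injects into $H^1({\rm Syl}_2(G),F|_{{\rm Syl}_2(G)})$) shows that vanishing for the Sylow $2$-subgroup forces vanishing for $G$. Thus one computes the $1427$ transitive $2$-groups first, looks up the Sylow $2$-subgroup via {\tt TransitiveIdentification} for the remaining $527$ groups, and only $240$ of them require a fresh computation; in particular $A_{16}$ and $S_{16}$ are settled by this reduction (or, alternatively, by quoting Voskresenskii--Kunyavskii for $S_n$ and Macedo for $A_n$). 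Without this idea, or an equivalent one, your brute-force plan would stall exactly on the cases that decide several entries of the table.

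Two smaller inaccuracies you should fix: the flabby resolution to be used is that of the character lattice $J_{G/H}=\widehat{T}$ itself, $0\to J_{G/H}\to P\to F\to 0$ with $F$ flabby (your display resolves $J_{G/H}^\circ=I_{G/H}$, whose flabby class computes a different invariant), and $\Sha^2_\omega(G,J_{G/H})$ is by definition the intersection of the kernels of the restrictions to cyclic subgroups inside $H^2(G,J_{G/H})$ --- there is no further quotient by an ``obviously split part'' to take.
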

%
%[ [ [  ], 1101 ], [ [ 2 ], 774 ], [ [ 2, 2 ], 31 ], [ [ 2, 2, 2 ], 37 ], 
%  [ [ 2, 2, 2, 2 ], 1 ], [ [ 2, 2, 2, 2, 2, 2 ], 1 ], [ [ 4 ], 9 ] ]
%gap> Filtered([1..1954],x->H1FJ16tbl[x]=[2,2]);
%[ 7, 10, 11, 46, 58, 61, 73, 76, 82, 87, 89, 107, 113, 118, 120, 128, 129,
% 138, 142, 162, 164, 165, 178, 183, 206, 297, 308, 319, 414, 731, 1080 ]
%gap> Filtered([1..1954],x->H1FJ16tbl[x]=[2,2,2]);
%[ 2, 9, 18, 20, 23, 25, 67, 69, 83, 92, 98, 101, 127, 173, 197, 202, 212,
% 241, 246, 270, 295, 301, 313, 358, 372, 440, 463, 466, 604, 632, 649, 656,
% 794, 801, 1082, 1187, 1378 ]
%gap> Filtered([1..1954],x->H1FJ16tbl[x]=[2,2,2,2]);
%[ 64 ]
%gap> Filtered([1..1954],x->H1FJ16tbl[x]=[2,2,2,2,2,2]);
%[ 3 ]
%gap> Filtered([1..1954],x->H1FJ16tbl[x]=[4]);
%[ 4, 51, 63, 143, 185, 323, 375, 430, 769 ]
%
%\newpage
\begin{center}
\vspace*{1mm}
Table $1$-$1$: $H^1(k,{\rm Pic}\, \overline{X})\simeq H^1(G,[J_{G/H}]^{fl})\simeq\bZ/2\bZ$ 
with $G=16Tm$ $(1\leq m\leq 1954)$\vspace*{2mm}\\
{\footnotesize 
\begin{tabular}{lc} 
$m:  G=16Tm$ with $H^1(k,{\rm Pic}\, \overline{X})\simeq H^1(G,[J_{G/H}]^{fl})\simeq\bZ/2\bZ$ (774 cases)\\\hline
  5, 8, 13, 15, 16, 17, 19, 21, 27, 29, 30, 32, 33, 34, 35, 36, 37, 38, 39, 40, 
  42, 43, 44, 45, 47, 48, 50, 52, 53, 54, 57, 62, 68,\\ 
  70, 71, 72, 74, 75, 78, 80, 81, 86, 88, 90, 91, 93, 94, 96, 99, 100, 102, 105, 106, 
  109, 110, 111, 112, 114, 115, 116, 117, 119,\\ 
  122, 123, 130, 131, 132, 133, 134, 137, 140, 141, 148, 149, 150, 151, 152, 153, 154, 156, 
  159, 160, 166, 167, 169, 170, 171,\\ 
  172, 174, 175, 176, 177, 179, 180, 182, 184, 192, 193, 194, 198, 199, 200, 201, 203, 
  204, 205, 207, 208, 210, 214, 215, 217,\\ 
  218, 221, 223, 225, 229, 230, 231, 232, 237, 239, 244, 245, 248, 249, 250, 251, 
  254, 255, 256, 259,  260, 261, 262, 264, 265,\\ 
  266, 267, 268, 269, 271, 272, 274, 275, 276, 277, 278, 279, 280, 281, 282, 
  283, 284, 287, 288, 291, 292, 293, 294, 296, 298,\\ 
  303, 304, 305, 307, 309, 310, 311, 312, 315, 316, 317, 318, 320, 321, 
  324, 326, 328, 329, 331, 334, 335, 336, 340, 341, 342,\\ 
  343, 344, 345, 347, 350, 351, 352, 353, 357, 359, 360, 361, 362, 
  363, 364, 365, 368, 371, 373, 376, 377, 379, 380, 381, 383,\\ 
  384, 385, 386, 387, 389, 393, 394, 396, 397, 398, 400, 401, 402, 404, 406, 407, 411, 412, 416, 417, 
  419, 420, 423, 424, 426,\\ 
  427, 429, 433, 435, 437, 441, 442, 445, 446, 448, 449, 450, 452, 453, 454, 455, 456, 457, 459, 460, 
  462, 464, 465, 469, 470,\\ 
  473, 475, 477, 482, 487, 489, 494, 496, 497, 498, 499, 501, 504, 506, 507, 508, 509, 510, 512, 513, 
  514, 523, 528, 531, 532,\\ 
  535, 540, 543, 544, 545, 546, 548, 549, 555, 556, 557, 558, 559, 562, 566, 567, 570, 571, 574, 575, 
  576, 579, 582, 583, 585,\\ 
  586, 587, 588, 589, 592, 594, 595, 596, 597, 598, 601, 603, 605, 606, 607, 608, 609, 613, 616, 618, 
  619, 620, 622, 624, 627,\\ 
  628, 633, 635, 638, 639, 640, 641, 642, 643, 644, 645, 647, 651, 652, 653, 654, 655, 657, 658, 659, 
  660, 661, 665, 666, 667,\\ 
  668, 669, 676, 677, 682, 683, 684, 686, 689, 690, 693, 695, 700, 703, 706, 708, 709, 713, 714, 716, 
  717, 718, 720, 721, 722,\\ 
  723, 724, 733, 734, 737, 738, 750, 751, 752, 755, 758, 766, 767, 771, 772, 776, 779, 782, 783, 784, 
  787, 793, 796, 797, 799,\\ 
  802, 803, 804, 805, 806, 807, 809, 810, 812, 813, 815, 820, 821, 822, 823, 824, 825, 829, 831, 833, 
  834, 835, 842, 845, 846,\\ 
  848, 850, 853, 854, 855, 856, 857, 860, 862, 869, 872, 874, 875, 878, 881, 884, 886, 887, 893, 894, 
  895, 896, 903, 905, 913,\\ 
  915, 916, 918, 920, 921, 922, 926, 927, 928, 930, 931, 935, 948, 953, 963, 966, 967, 970, 976, 977, 
  978, 981, 982, 984, 985,\\ 
  988, 991, 996, 1000, 1002, 1005, 1008, 1009, 1010, 1013, 1014, 
  1015, 1020, 1021, 1023, 1024, 1028, 1029, 1032, 1037, 1039,\\ 
  1042, 1043, 1050, 1058, 1059, 1063, 1068, 1069, 1070, 1071, 
  1072, 1083, 1084, 1088, 1089, 1092, 1095, 1097, 1098, 1099,\\ 
  1101, 1102, 1103, 1104, 1110, 1111, 1114, 1115, 1117, 1118, 
  1121, 1122, 1123, 1125, 1130, 1133, 1136, 1139, 1141, 1142,\\ 
  1143, 1144, 1145, 1146, 1148, 1151, 1152, 1154, 1157, 1159, 
  1163, 1167, 1168, 1171, 1172, 1175, 1182, 1183, 1188, 1190,\\ 
  1196, 1201, 1202, 1203, 1206, 1207, 1208, 1209, 1211, 1212, 
  1215, 1219 1220, 1221, 1228, 1229, 1230, 1231, 1232, 1234,\\
  1238, 1241, 1242, 1245, 1246, 1255, 1268, 1273, 1279, 1282, 
  1283, 1284, 1285, 1288, 1290, 1295, 1301, 1302, 1303, 1305,\\
  1306, 1308, 1309, 1327, 1329, 1330, 1331, 1333, 1334, 1335, 
  1336, 1339, 1341, 1349, 1352, 1353, 1355, 1356, 1358, 1360,\\
  1362, 1367, 1369, 1370, 1372, 1375, 1377, 1379, 1381, 1383, 
  1384, 1386, 1389, 1393, 1398, 1403, 1404, 1406, 1407, 1412,\\
  1414, 1415, 1418, 1419, 1424, 1431, 1435, 1436, 1437, 1438, 
  1439, 1443, 1449, 1453, 1457, 1461, 1463, 1468, 1470, 1472,\\
  1479, 1481, 1485, 1492, 1494, 1506, 1509, 1510, 1511, 1512, 
  1513, 1515, 1516, 1517, 1518, 1522, 1525, 1526, 1543, 1544,\\
  1545, 1546, 1547, 1548, 1549, 1550, 1551, 1555, 1557, 1560, 
  1562, 1564, 1566, 1568, 1577, 1580, 1586, 1587, 1588, 1590,\\
  1598, 1599, 1600, 1619, 1620, 1625, 1626, 1639, 1645, 1650, 
  1654, 1656, 1657, 1658, 1668, 1669, 1672, 1676, 1695, 1696,\\
  1697, 1703, 1705, 1707, 1716, 1717, 1747, 1749, 1752, 1753, 
  1767, 1769, 1785, 1786, 1790, 1792, 1794, 1803, 1809, 1810,\\
  1811, 1819, 1822, 1831, 1844, 1845, 1854, 1859, 1874, 1876, 
  1877, 1891, 1892, 1895, 1896, 1898, 1904, 1908, 1911, 1917,\\
  1919, 1921, 1927, 1936, 1937, 1941\\\hline
\end{tabular}
}
\end{center}%\vspace*{2mm}
%%%%%%%%%%%%%%%%%%%%%%%%%%%%%%%%%%%%%%%%%%%%%%%%
\newpage
\begin{remark}
For the reader's convenience, we give a list of some groups $G=16Tm$ in Theorem \ref{thmain1}:\\ 
$16T64\simeq (C_2)^4\rtimes C_3$ with $H^1(k,{\rm Pic}\, \overline{X})\simeq (\bZ/2\bZ)^4$,\\ 
$16T3\simeq (C_2)^4$ with $H^1(k,{\rm Pic}\, \overline{X})\simeq (\bZ/2\bZ)^6$,\\ 
$16T4\simeq (C_4)^2$, 
$16T51\simeq (C_4)^2\rtimes C_2$, 
$16T63\simeq (C_4)^2\rtimes C_3$, 
$16T143\simeq (C_4)^2\rtimes C_4$, 
$16T185\simeq (C_4)^2\rtimes C_6$, 
$16T430\simeq (C_4)^2\rtimes (C_3\rtimes C_4)\simeq (C_4)^2\rtimes Q_{12}$ 
with $H^1(k,{\rm Pic}\, \overline{X})\simeq \bZ/4\bZ$
(see Example \ref{ex4.3}). 
\end{remark}
\begin{remark}
Theorem \ref{thmain1} enables us 
to obtain the group $T(k)/R$ of $R$-equivalence classes 
over a local field $k$ via 
$T(k)/R\simeq H^1(k,{\rm Pic}\,\overline{X})\simeq 
H^1(G,[J_{G/H}]^{fl})$ for norm one tori $T=R^{(1)}_{K/k}(\bG_m)$ 
with $[K:k]=16$ and $G={\rm Gal}(L/k)=16Tm$ %primitive  
(see Colliot-Th\'{e}l\`{e}ne and Sansuc \cite[Corollary 5, page 201]{CTS77}, 
Voskresenskii \cite[Section 17.2]{Vos98} and Hoshi, Kanai and Yamasaki \cite[Section 7, Application 1]{HKY22}). 
We also see that 
$H^1(k,{\rm Pic}\, \overline{X})\simeq\Sha^2_\omega(G,J_{G/H})\simeq 
{\rm Br}(X)/{\rm Br}(k)\simeq {\rm Br}_{\rm nr}(k(X)/k)/{\rm Br}(k)$ (see Section \ref{S2}). 
\end{remark}

Kunyavskii \cite{Kun84} gave a necessary and sufficient condition 
for the Hasse norm principle for $K/k$ with $[K:k]=4$ $(G=4Tm\ (1\leq m\leq 5))$. 
Drakokhrust and Platonov \cite{DP87} gave a necessary and sufficient condition 
for the Hasse norm principle for $K/k$ with $[K:k]=6$ $(G=6Tm\ (1\leq m\leq 16))$. 
Hoshi, Kanai and Yamasaki \cite[Theorem 1.18]{HKY22}, \cite[Theorem 1.3]{HKY23} gave 
a necessary and sufficient condition for the Hasse norm principle 
for $K/k$ with $[K:k]=n\leq 15$ (see also \cite[Section 1]{HKY22}). 

Ono's theorem (Theorem \ref{thOno}) claims that 
the Hasse norm principle holds for $K/k$ if and only if 
$\Sha(T)=0$ where $T=R^{(1)}_{K/k}(\bG_m)$ is the norm one torus of $K/k$. 
Because $H^1(k,{\rm Pic}\, \overline{X})=0$ implies $\Sha(T)=0$ (see Theorem \ref{thV}), 
the Hasse norm principle holds for $K/k$ when $H^1(k,{\rm Pic}\, \overline{X})=0$, 
e.g. the $1101$ cases of $G=16Tm$ with $H^1(k,{\rm Pic}\, \overline{X})=0$ as in Theorem \ref{thmain1}. 
Voskresenskii and Kunyavskii \cite{VK84} proved that 
$H^1(k,{\rm Pic}\, \overline{X})=0$ for $G\simeq S_n$ and $[G:H]=n$ 
(see also Voskresenskii \cite[Theorem 4, Corollary]{Vos88}) 
and 
Macedo \cite{Mac20} showed that 
$H^1(k,{\rm Pic}\, \overline{X})=0$ for $G\simeq A_n$ and $[G:H]=n\geq 5$.  

For the case where $n=16$, because we have $774+31+37+1+1+9=853$ cases 
with $H^1(k,{\rm Pic}\, \overline{X})\neq 0$ as in Theorem \ref{thmain1}, 
we will consider the Hasse norm principle for $K/k$ with $[K:k]=16$  
only for the maximal $H\leq G=16Tm$ cases with $[G:H]=16$, i.e. $G$ is primitive. 
Recall that a transitive subgroup $G=nTm\leq S_n$ is called {\it primitive} if the only $G$-invariant partitions are 
$\{\{i\}\mid i\in\varOmega\}$ and $\{\varOmega\}$ where $\varOmega=\{1,\ldots,n\}$. 
We see that $G=nTm$ is primitive if and only if $H\leq G=nTm$ is maximal with $[G:H]=n$. 

Among $1954$ transitive groups $G=16Tm\leq S_{16}$, only one $16T1953\simeq A_{16}$ is simple.  
%(which is primitive). 
Including simple $A_{16}$, we find that there exist $22$ primitive subgroups $16Tm\leq S_{16}$ 
up to conjugacy (see Hulpke \cite[Tabelle 1]{Hul96}, \cite[Table 1]{Hul05}, \cite{GAP}). 
By Theorem \ref{thmain1}, we obtain $H^1(k,{\rm Pic}\, \overline{X})$ for $22$ primitive groups $G=16Tm$ 
as in Table $1$-$2$. 
%%%%%%%%%%%%%%%%%%%%
In Table $1$-$2$, 
$C_n$ is the cyclic group of order $n$, 
$A_n$ is the alternating group of degree $n$, 
$S_n$ is the symmetric group of degree $n$, 
$\GL_n(\bF_q)$ is the general linear group of degree $n$ 
over the finite field $\bF_q$ of $q$ elements. 
%%%%%%%%%%%%%%%%%%%%%%%%%%%%%%%%%%%%%%%%%%%%%%%%
%
%\newpage
\begin{center}
\vspace*{1mm}
Table $1$-$2$: $H^1(k,{\rm Pic}\, \overline{X})\simeq H^1(G,[J_{G/H}]^{fl})$ 
for $22$ primitive $G=16Tm\leq S_{16}$ ($H\leq G$: maximal)\vspace*{2mm}\\
\begin{tabular}{lc} 
$G$ & $H^1(k,{\rm Pic}\, \overline{X})$ 
$\simeq H^1(G,[J_{G/H}]^{fl})$\\\hline
$16T178\simeq (C_2)^4\rtimes C_5$ & $(\bZ/2\bZ)^{\oplus 2}$\\
$16T415\simeq ((C_2)^4\rtimes C_5)\rtimes C_2$ & $0$\\
$16T447\simeq (C_2)^4\rtimes C_{15}$ & $0$\\
$16T708\simeq (((C_2)^4\rtimes C_3)\rtimes C_2)\rtimes C_3$ & $\bZ/2\bZ$\\
$16T711\simeq ((C_2)^4\rtimes C_5)\rtimes C_4$ & $0$\\
$16T777\simeq (((C_2)^4\rtimes C_5)\rtimes C_2)\rtimes C_3$ & $0$\\
$16T1030\simeq (A_4)^2\rtimes C_4$ & $0$\\
$16T1034\simeq ((((C_2)^4\rtimes C_3)\rtimes C_2)\rtimes C_3)\rtimes C_2$ & $0$\\
$16T1079\simeq ((C_2)^4\rtimes C_{15})\rtimes C_4$ & $0$\\
$16T1080\simeq (C_2)^4\rtimes A_5$ & $(\bZ/2\bZ)^{\oplus 2}$\\
$16T1081\simeq (C_2)^4\rtimes A_5$ & $0$\\
$16T1294\simeq (S_4)^2\rtimes C_2$ & $0$\\
$16T1328\simeq (C_2)^4\rtimes S_5$ & $0$\\
$16T1329\simeq (C_2)^4\rtimes S_5$ & $\bZ/2\bZ$\\
$16T1508\simeq (C_2)^4\rtimes \GL_2(\bF_4)$ & $0$\\
$16T1653\simeq (C_2)^4\rtimes (A_5\rtimes S_3)$ & $0$\\
$16T1654\simeq (C_2)^4\rtimes A_6$ & $\bZ/2\bZ$\\
$16T1753\simeq (C_2)^4\rtimes S_6$ & $\bZ/2\bZ$\\
$16T1840\simeq (C_2)^4\rtimes A_7$ & $0$\\
$16T1906\simeq (C_2)^4\rtimes A_8\simeq (C_2)^4\rtimes \GL_4(\bF_2)$ & $0$\\
$16T1953\simeq A_{16}$ & $0$\\
$16T1954\simeq S_{16}$ & $0$\\\hline
\end{tabular}
\end{center}\vspace*{2mm}

By using Drakokhrust and Platonov's method 
(see Section \ref{S5} for details) 
and GAP functions given in \cite[Section 6]{HKY22}, \cite[Section 6]{HKY23} 
with the new function {\tt StemExtensions} of GAP \cite{GAP} provided in 
Section \ref{S7}, 
we will prove the second main theorem %of this paper 
which gives a necessary and sufficient condition 
for the Hasse norm principle for $K/k$, 
i.e. a computation of $\Sha(T)$ via Ono's theorem (Theorem \ref{thOno}) where $T=R^{(1)}_{K/k}(\bG_m)$, 
with $[K:k]=16$ and primitive %groups 
$G={\rm Gal}(L/k)=16Tm$ as in Table $1$-$2$. 
For the definitions of $\Sha(T)$ and $A(T)$, see Section \ref{S3}.  
%
%%%%%%%%%%%%%%%%%%%%%%%%%%%%%%%%%%%%%%%%%%
\begin{theorem}\label{thmain2}
Let $k$ be a number field, 
$K/k$ be a field extension of degree $16$ 
and $L/k$ be the Galois closure of $K/k$. 
Let $G={\rm Gal}(L/k)=16Tm$ be a transitive subgroup of $S_{16}$, 
$H={\rm Gal}(L/K)$ with $[G:H]=16$ 
and $G_v$ be the decomposition group of $G$ at a place $v$ of $k$. 
Let $T=R^{(1)}_{K/k}(\bG_m)$ be the norm one torus of $K/k$ 
of dimension $15$ and $X$ be a smooth $k$-compactification of $T$. 
%%%%%
Assume that $G=16Tm\leq S_{16}$ is one of the $22$ primitive groups, 
i.e. $H\leq G$ is maximal.  
%%%%%
Then $A(T)\simeq \Sha(T)=0$ except for the $6$ cases $G=16Tm$ 
with $H^1(k,{\rm Pic}\, \overline{X})\neq 0$ as in Table $1$-$2$. 
For the cases 
$G=16T708$, $16T1329$, $16T1654$, $16T1753$ with 
$H^1(k,{\rm Pic}\, \overline{X})\simeq \bZ/2\bZ$,  
%as in Table $1$-$2$,  
we have either {\rm (a)} 
$A(T)=0$ and $\Sha(T)\simeq\bZ/2\bZ$ 
or {\rm (b)} 
$A(T)\simeq\bZ/2\bZ$ and $\Sha(T)=0$. 
We assume that $H$ is the stabilizer of one of the letters in $G$. 
Then $\Sha(T)$ for %$T=R^{(1)}_{K/k}(\bG_m)$ and 
the $6$ primitive cases $G=16Tm$ with 
$H^1(k,{\rm Pic}\, \overline{X})\neq 0$ 
is given as in Table $2$-$1$ and Table $2$-$2$. 
Moreover, for the case $G=16T708$, 
$\Sha(T)$ is also given for general $H\leq G$ with $[G:H]=16$. 
\end{theorem}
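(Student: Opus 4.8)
The plan is to compute, for each of the $22$ primitive groups $G=16Tm$, the obstruction group to the Hasse norm principle via Drakokhrust and Platonov's method, as set up in Section~\ref{S5} and already executed in \cite{HKY22}, \cite{HKY23} for $[K:k]\leq 15$. Concretely, one has the exact sequence relating $\Sha(T)$ (equivalently, the obstruction to the Hasse norm principle for $K/k$ by Ono's theorem, Theorem~\ref{thOno}) to a first obstruction computed inside $G$ and a second obstruction involving the local decomposition groups $G_v$. For the $16$ primitive cases with $H^1(k,{\rm Pic}\,\overline{X})=0$, Theorem~\ref{thmain1} already gives $A(T)\simeq\Sha(T)=0$, so there is nothing to do; the work concentrates on the $6$ cases $m\in\{178,708,1080,1329,1654,1753\}$ where $H^1(k,{\rm Pic}\,\overline{X})\neq 0$.

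First I would carry out the group-theoretic side: for each of the $6$ groups, identify $H$ as a point stabilizer, form the map $H/[H,H]\to G/[G,G]$ and the analogous comparison with all intermediate subgroups, and compute the "first obstruction" $\Sha$-type quotient using the GAP routines of \cite[Section 6]{HKY22}, \cite[Section 6]{HKY23}. The key new ingredient here is the treatment of stem extensions: one must pass to a suitable central extension $\widetilde G$ of $G$ (a stem cover, governed by the Schur multiplier) so that the relevant classes in $H^2$ become visible, and this is precisely where the new GAP function {\tt StemExtensions} provided in Section~\ref{S7} enters. Running this for $m=178,1080$ should reproduce $\Sha(T)$ determined already by the vanishing/non-vanishing bookkeeping, and for $m=708,1329,1654,1753$ it should pin the first obstruction down to a $\bZ/2\bZ$, matching $H^1(k,{\rm Pic}\,\overline{X})\simeq\bZ/2\bZ$.

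Next comes the local analysis. For the four cases with $H^1(k,{\rm Pic}\,\overline{X})\simeq\bZ/2\bZ$, the dichotomy (a)/(b) is forced by the structure of the Drakokhrust--Platonov exact sequence: the global obstruction $\Sha(T)$ is the quotient of the "first obstruction" by the image of a sum over places $v$ of local terms attached to the decomposition groups $G_v$, and since the first obstruction is $\bZ/2\bZ$, the answer is either $0$ (if some $G_v$ kills it) or $\bZ/2\bZ$ (if no $G_v$ does), with the complementary statement for $A(T)$ following from $A(T)\simeq H^1(k,{\rm Pic}\,\overline{X})/\Sha(T)$ in the relevant cofinal sense. So the task is to enumerate, up to conjugacy, the subgroups $G_v\leq G$ that can occur as decomposition groups of a degree-$16$ extension and determine for which of them the local restriction of the nonzero class survives; the cyclic $G_v$ always split the obstruction except when a specific transitive-on-blocks condition holds, and one tabulates the precise list. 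This yields Tables~$2$-$1$ and $2$-$2$: $\Sha(T)\simeq\bZ/2\bZ$ exactly when every $G_v$ lies in the explicit family for which the class is locally trivial, and $\Sha(T)=0$ otherwise (with $A(T)$ the mirror image). For $G=16T178$ and $16T1080$, where $H^1(k,{\rm Pic}\,\overline{X})\simeq(\bZ/2\bZ)^{\oplus 2}$, the same machinery applies componentwise; here one expects $A(T)\simeq\Sha(T)=0$ or a proper subgroup, read off from the two-dimensional local images. Finally, for $G=16T708$ I would repeat the computation for every intermediate $H\leq G$ with $[G:H]=16$, not just the point stabilizer, since $16T708$ is primitive but its isoclinism/subgroup structure makes the general-$H$ statement tractable and worth recording.

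The main obstacle I anticipate is the stem-extension step: correctly choosing the central extension $\widetilde G$ and verifying that the inflation--restriction bookkeeping in the Drakokhrust--Platonov sequence is carried out with the right identifications, so that the "first obstruction" is genuinely computed and not merely bounded. Once {\tt StemExtensions} is validated against the known $[K:k]\leq 15$ data, the remaining local enumeration is a finite, if lengthy, check, and the (a)/(b) dichotomy for the four $\bZ/2\bZ$ cases is then a direct consequence.
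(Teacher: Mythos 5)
Your proposal is correct and follows essentially the same route as the paper: reduce via Ono's theorem (Theorem \ref{thOno}) and Theorem \ref{thV} to computing ${\rm Obs}(K/k)$ by the Drakokhrust--Platonov method, pass to a Schur cover or stem extension to capture the full obstruction, enumerate the possible decomposition groups $G_v$ up to conjugacy by GAP, and deduce the (a)/(b) dichotomy from the exact sequence $0\to A(T)\to H^1(k,{\rm Pic}\,\overline{X})^{\vee}\to\Sha(T)\to 0$. The only difference is organizational: for $m=1329,1654,1753,1080$, whose Schur multipliers contain $\bZ/4\bZ$ or $\bZ/3\bZ$ factors, the paper replaces the full Schur cover by a stem extension with kernel $(\bZ/2\bZ)^{\oplus e}$ and justifies ${\rm Obs}(K/k)={\rm Obs}_1(\overline{L}_j/K/k)$ via Theorem \ref{thDP87} (checking that the corestriction on $\widehat{H}^{-3}$ is surjective, which holds for exactly one such extension), a computational refinement of the same idea you describe.
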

%%%%%%%%%%%%%%%%%%%%%%%%%%%%%%%%%%%%%%%%%%%

In Table $2$-$1$ and Table $2$-$2$, 
$N_G(G^\prime)$ is the normalizer of 
a subgroup $G^\prime\leq G$, 
$D(G)$ is the derived subgroup (commutator subgroup) of $G$, 
${\rm Orb}_{G}(i)$ is the orbit of $1\leq i\leq 16$ 
under the action of $G\leq S_{16}$, 
${\rm Syl}_p(G)$ is a $p$-Sylow subgroup of $G$ 
where $p$ is a prime number, 
$G_{16,3}:=\langle x,y,z\mid x^4=y^2=z^2=1, xy=yx, yz=zy, zxz^{-1}=xy\rangle\simeq (C_4\times C_2)\rtimes C_2$ 
is the group of order $16$ with SmallGroup ID $[16,3]$ in GAP \cite{GAP}. 
Note that we should distinguish $G_{16,3}$ from 
$G_{16,13}:=\langle x,y,z\mid x^4=y^2=z^4=1, x^2=z^2, yxy=x^{-1}, xz=zx, yz=zy\rangle\simeq (C_4\times C_2)\rtimes C_2$  
with SmallGroup ID $[16,13]$ in GAP 
which is the central product of $D_4\simeq\langle x,y\rangle$ and $C_4\simeq\langle z\rangle$ 
over $C_2\simeq\langle x^2\rangle=\langle z^2\rangle$. 
%a common cyclic central subgroup of order two.

Note also that a place $v$ of $k$ with non-cyclic decomposition group $G_v$ 
as in Table $2$-$1$ and Table $2$-$2$ must be ramified in $L$ because 
if $v$ is unramified, then $G_v$ is cyclic. 

\newpage
%%%%%%%%%%%%%%%%%%%%%%%%%%%%%%%%%%%%%%%%%%%%%%%%%%%%%%%%%%%%%%%%%%%%%
\begin{center} % minipage: 12cm -> 11cm
%\vspace*{1mm}
Table 2-1: $\Sha(T)=0$ for $T=R^{(1)}_{K/k}(\bG_m)$ and primitive groups $G={\rm Gal}(L/k)=16Tm$\\
\hspace*{-12mm}
with $H^1(k,{\rm Pic}\, \overline{X})\simeq \bZ/2\bZ$ as in Table $1$-$1$ and Table $1$-$2$\vspace*{2mm}\\
\renewcommand{\arraystretch}{1.05}
\begin{tabular}{ll} 
 & $\Sha(T)\leq \bZ/2\bZ$, and\\
$G$ & $\Sha(T)=0$ if and only if there exists a place $v$ of $k$ such that\\\hline\vspace*{-2mm}\\
%%
%$12T2\simeq C_6\times C_2$ & \multirow{16}{*}{
%\begin{minipage}{12cm}
%$V_4\leq G_v$ {\rm (Tate \cite{Tat67} for $12T2$, $12T3$, $12T4$)}
%\end{minipage}}\\
$16T708\simeq (((C_2)^4\rtimes C_3)\rtimes C_2)\rtimes C_3$ & \multirow{3}{*}{
\begin{minipage}{11cm} 
{\rm (i)} $V_4\leq G_v$ where 
{\rm (i-1)} $[G:N_G(V_4)]=18$ and $V_4 \subset D(G)$ with $|D(G)|=48$, or 
{\rm (i-2)} $[G:N_G(V_4)]=2$, \\
{\rm (ii)} $(C_2)^3\leq G_v$, or\\
{\rm (iii)} $(C_4\times C_2)\leq G_v$
\end{minipage}}\\
 & \\
 & \\
 & \vspace*{1mm}\\\hline\vspace*{-2mm}\\
$16T1329\simeq (C_2)^4\rtimes S_5$ & \multirow{3}{*}{
\begin{minipage}{11cm} 
{\rm (i)} $V_4\leq G_v$ where 
$[G:N_G(V_4)]=40$, \\
{\rm (ii)} $D_4\leq G_v$ where 
$|{\rm Orb}_{D_4}(i)|=4$ for $12$ of $1\leq i\leq 16$ and 
$|{\rm Orb}_{D_4}(j)|=2$ for $4$ of $1\leq j\leq 16$, or\\
{\rm (iii)} $C_4\times C_4\leq G_v$ where
$[G:N_G(C_4\times C_4)]=10$
\end{minipage}}\\
 & \\
 & \\
 & \vspace*{1mm}\\\hline\vspace*{-2mm}\\
$16T1654\simeq (C_2)^4\rtimes A_6$ & \multirow{3}{*}{
\begin{minipage}{11cm} 
{\rm (i)} $V_4\leq G_v$ where 
$|N_{N_{S_{16}}(G)}(V_4)|=96$ and
${\rm Syl}_2(N_{N_{S_{16}}(G)}(V_4))=(C_2)^5$, \\
{\rm (ii)} $D_4\leq G_v$ where 
$|{\rm Orb}_{D_4}(i)|=4$ for $12$ of $1\leq i\leq 16$ and 
$|{\rm Orb}_{D_4}(j)|=2$ for $4$ of $1\leq j\leq 16$, or\\
{\rm (iii)} $C_4\times C_4\leq G_v$ where
$[G:N_G(C_4\times C_4)]=30$
\end{minipage}}\\
 & \\
 & \\
 & \vspace*{1mm}\\\hline\vspace*{-2mm}\\
$16T1753\simeq (C_2)^4\rtimes S_6$ & \multirow{3}{*}{
\begin{minipage}{11cm} 
{\rm (i)} $V_4\leq G_v$ where
{\rm (i-1)}
$|{\rm Orb}_{V_4}(i)|=2$ for $1\leq i\leq 16$ and
$|N_{N_{S_{16}}(G)}(V_4)|=64$, or 
{\rm (i-2)}
$|{\rm Orb}_{V_4}(i)|=2$ for $12$ of $1\leq i\leq 16$ and
$|{\rm Orb}_{V_4}(j)|=4$ for $4$ of $1\leq j\leq 16$ and
$|N_{N_{S_{16}}(G)}(V_4)|=96$, \\
{\rm (ii)} $C_4\times C_2\leq G_v$ where
$|N_{N_{S_{16}}(G)}(C_4\times C_2)|=32$ and
{\rm (ii-1)} 
$|{\rm Orb}_{C_4\times C_2}(i)|=4$ for $12$ of $1\leq i\leq 16$ and
$|{\rm Orb}_{C_4 \times C_2}(j)|=2$ for $4$ of $1\leq j\leq 16$, or 
{\rm (ii-2)}
$|{\rm Orb}_{V_4}(i)|=8$ for $1\leq i\leq 16$, \\
{\rm (iii)} $D_4\leq G_v$ where
$|{\rm Orb}_{D_4}(i)|=4$ for $12$ of $1\leq i\leq 16$ and
$|{\rm Orb}_{D_4}(j)|=2$ for $4$ of $1\leq j\leq 16$, \\
{\rm (iv)} $C_4\times C_4\leq G_v$ where
$|G:N_G(C_4\times C_4)|=30$, \\
{\rm (v)} $G_{16,3}\leq G_v$ where
$|G:N_G(G_{16,3})|=180$ and
$|G_{16,3} \cap G_{16,3}^\prime|=1$
for 160 of 180 conjugate subgroups $G_{16,3}^\prime$ of $G_{16,3}$, \\
{\rm (vi)} $C_8\times C_2\leq G_v$, or\\
{\rm (vii)} $C_2\times Q_8\leq G_v$ where
$|G \cap E_{16}|=2$ ($E_{16} \simeq (C_2)^4$ is the unique characteristic subgroup
of $G$ of order $16$)
\end{minipage}}\\
 & \\
 & \\
 & \\
 & \\
 & \\
 & \\
 & \\
 & \\
 & \\
 & \\
 & \\
 & \vspace*{4mm}\\\hline
\end{tabular}
\end{center}~\\

\newpage
%%%%%%%%%%%%%%%%%%%%%%%%%%%%%%%%%%%%%%%%%%%%%%%%%%%%%%%%%%%%%%%%%%%%%
\begin{center}
%\vspace*{1mm}
Table 2-2: $\Sha(T)$ for $T=R^{(1)}_{K/k}(\bG_m)$ 
and primitive groups $G={\rm Gal}(L/k)=16Tm$\\
\hspace*{5mm}
with $H^1(k,{\rm Pic}\, \overline{X})\simeq (\bZ/2\bZ)^{\oplus 2}$ 
as in Theorem \ref{thmain1} and Table $1$-$2$\vspace*{2mm}\\
\renewcommand{\arraystretch}{1.05}
\begin{tabular}{ll} 
$G$ & $\Sha(T)\leq (\bZ/2\bZ)^{\oplus 2}$\\\hline\vspace*{-2mm}\\
%%
%$12T2\simeq C_6\times C_2$ & \multirow{16}{*}{
%\begin{minipage}{12cm}
%$V_4\leq G_v$ {\rm (Tate \cite{Tat67} for $12T2$, $12T3$, $12T4$)}
%\end{minipage}}\\
$16T178\simeq (C_2)^4\rtimes C_5$ & \multirow{8}{*}{
\begin{minipage}{12cm}
~\vspace*{-1mm}\\
{\rm (I)} $\Sha(T)=0$ if and only if\\ 
{\rm (I-i)} 
there exists a place $v$ of $k$ such that 
$(C_2)^3\leq G_v$, or\\
{\rm (I-ii)} 
there exist places $v_1,v_2$ of $k$ such that 
$V_4\simeq V_4^{(i)}\leq G_{v_1}$, $V_4\simeq V_4^{(j)}\leq G_{v_2}$ 
%$|{\rm Orb}_{V_4^{(1)}}(i)|=|{\rm Orb}_{V_4^{(2)}}(i)|=2$ 
%for any $1\leq i\leq 12$ and 
%$V_4^{(1)}\neq V_4^{(2)}$
$(1\leq i<j\leq 7)$ with $(i,j)\in\{(1,3)$, $(1,4)$, $(1,5)$, $(1,6)$, $(2,3)$, 
$(2,4)$, $(2,5)$, $(2,6)$, $(3,5)$, $(3,6)$, $(4,5)$, $(4,6)\}$;\\
%and $G_{v_1}\cap G_{v_2}=1$;\\
%$($equivalently, 
%$(\bZ/2\bZ)^{\oplus 2}\leq H^3(G_{v_1},\bZ)\oplus H^3(G_{v_2},\bZ)
%\leq H^3(G,\bZ)\simeq (\bZ/2\bZ)^{\oplus 4}$$)$
{\rm (II)} $\Sha(T)\simeq \bZ/2\bZ$ if and only if 
there exists a place $v$ of $k$ such that 
$V_4^{(i)}\leq G_v$ $(1\leq i\leq 6)$
%where 
%$|{\rm Orb}_{V_4}(i)|=2$ for any $1\leq i\leq 12$ 
and {\rm (I)} does not hold\\
where there exist $7$ non-conjugate $V_4\simeq V_4^{(i)}\leq G$ $(1\leq i\leq 7)$ 
which can be distinguished by the following subtable:\vspace*{2mm}\\
Subtable: the number of the $t_i$'s with ${\rm Orb}_{V_4}{(1)}=\{1,t_1,t_2,t_3\}$ 
which belong to ${\rm Orb}_H(s_j)$ with 
$\{1,\ldots,16\}={\rm Orb}_H(1)$ $\cup$ ${\rm Orb}_H(s_1)$ $\cup$ 
${\rm Orb}_H(s_2)$ $\cup$ ${\rm Orb}_H(s_3)$ 
where $H={\rm Stab}_1(G)\simeq C_5$ with  
${\rm Orb}_H(1)=\{1\}$ and $|{\rm Orb}_H(s_j)|=5$ $(1\leq j\leq 3)$.\vspace*{2mm}\\
\begin{tabular}{cccc}
 & ${\rm Orb}_H(s_1)$ & ${\rm Orb}_H(s_2)$ & ${\rm Orb}_H(s_3)$\\\hline
$V_4^{(1)}$ & $1$ & $0$ & $2$\\
$V_4^{(2)}$ & $1$ & $2$ & $0$\\
$V_4^{(3)}$ & $2$ & $0$ & $1$\\
$V_4^{(4)}$ & $0$ & $2$ & $1$\\
$V_4^{(5)}$ & $2$ & $1$ & $0$\\
$V_4^{(6)}$ & $0$ & $1$ & $2$\\
$V_4^{(7)}$ & $1$ & $1$ & $1$\\
\end{tabular}
\end{minipage}}\\
 & \\
 & \\
 & \\
 & \\
 & \\
 & \\
 & \\
 & \\
 & \\
 & \\
 & \\
 & \\
 & \\
 & \\
 & \\
 & \\
 & \\
 & \\
 & \\
 & \\
 & \\
 & \vspace*{1mm}\\\hline\\~\vspace*{-4mm}
$16T1080\simeq (C_2)^4\rtimes A_5$ & \multirow{8}{*}{
\begin{minipage}{12cm}
{\rm (I)} $\Sha(T)=0$ if and only if\\ 
{\rm (I-i)} 
there exists a place $v$ of $k$ such that 
$(C_2)^3\leq G_v$ where 
$(C_2)^3\cap E_{16}=C_2$ for the unique characteristic subgroup 
$E_{16}\simeq (C_2)^4\lhd G$ of order $16$, or\\
{\rm (I-ii)} 
there exist places $v_1,v_2$ of $k$ such that 
$V_4\simeq V_4^{(i)}\leq G_{v_1}$, $V_4\simeq V_4^{(j)}\leq G_{v_2}$ 
$(1\leq i<j\leq 3)$ 
where there exist $3$ non-conjugate $V_4\simeq V_4^{(i)}\leq G$ 
$(1\leq i\leq 3)$ which satisfy 
$|{\rm Orb}_{V_4^{(i)}}(s_1)|=2$ for $12$ of $1\leq s_1\leq 16$ 
and $|{\rm Orb}_{V_4^{(i)}}(s_2)|=4$ for $4$ of $1\leq s_2\leq 16$, or\\ 
%where there exist $3$ non-conjugate $V_4\simeq V_4^{(i)}\leq G$ $(1\leq i\leq 3)$, or\\ 
{\rm (I-iii)} there exist places $v_1,v_2$ of $k$ such that 
$(C_4)^2\simeq ((C_4)^2)^{(i)}\leq G_{v_1}$, 
$(C_4)^2\simeq ((C_4)^2)^{(j)}\leq G_{v_2}$ 
$(1\leq i<j\leq 3)$ 
where there exist $3$ non-conjugate 
$(C_4)^2\simeq ((C_4)^2)^{(i)}\leq G$ $(1\leq i\leq 3)$, or\\
%where there exist $3$ non-conjugate $(C_4)^2\simeq ((C_4)^2)^{(i)}$ $(1\leq i\leq 3)$
{\rm (I-iv)} 
there exist places $v_1,v_2$ of $k$ such that 
$V_4\simeq V_4^{(i)}\leq G_{v_1}$ $(1\leq i\leq 3)$ 
where there exist $3$ non-conjugate $V_4\simeq V_4^{(i)}\leq G$ 
$(1\leq i\leq 3)$ which satisfy 
$|{\rm Orb}_{V_4^{(i)}}(s_1)|=2$ for $12$ of $1\leq s_1\leq 16$ 
and $|{\rm Orb}_{V_4^{(i)}}(s_2)|=4$ for $4$ of $1\leq s_2\leq 16$, 
$(C_4)^2\simeq ((C_4)^2)^{(j)}\leq G_{v_2}$ $(1\leq j\leq 3)$ 
where there exist $3$ non-conjugate 
$(C_4)^2\simeq ((C_4)^2)^{(j)}\leq G$ $(1\leq j\leq 3)$ 
and $|N_{N_{S_{16}}}(x)\cap N_{N_{S_{16}}}(y)|=12$ or $48$ 
for $x\in V_4^{(i)}$ and $y\in ((C_4)^2)^{(j)}$ 
(the last condition is satisfied by $6$ $(i,j)$'s out of $9$);\\
{\rm (II)} $\Sha(T)\simeq \bZ/2\bZ$ if and only if 
there exists a place $v$ of $k$ such that 
$V_4\leq G_v$ 
which satisfies 
$|{\rm Orb}_{V_4}(s_1)|=2$ for $12$ of $1\leq s_1\leq 16$ 
and $|{\rm Orb}_{V_4}(s_2)|=4$ for $4$ of $1\leq s_2\leq 16$, 
or $(C_4)^2\leq G_v$, 
and {\rm (I)} does not hold
\end{minipage}}\\
 & \\
 & \\
 & \\
 & \\
 & \\
 & \\
 & \\
 & \\
 & \\
 & \\
 & \\
 & \\
 & \\
 & \\
 & \\
 & \\
 & \\
 & \\
 & \vspace*{4mm}\\\hline
\end{tabular}
\end{center}~\\
\newpage
%%%%%%%%%%%%%%%%%%%%%%%%%%%
As a consequence of Theorem \ref{thmain2}, 
we get the 
Tamagawa number $\tau(T)$ of %norm one tori 
$T=R^{(1)}_{k/k}(\bG_m)$ 
over a number field $k$ 
via Ono's formula $\tau(T)=|H^1(G,J_{G/H})|/|\Sha(T)|$ 
where $J_{G/H}\simeq \widehat{T}={\rm Hom}(T,\bG_m)$ 
(see Ono \cite[Main theorem, page 68]{Ono63}, \cite{Ono65}, 
Voskresenskii \cite[Theorem 2, page 146]{Vos98} and 
Hoshi, Kanai and Yamasaki \cite[Section 8, Application 2]{HKY22}). 
%
%We may confirm that $H^1(k,\widehat{T})\simeq H^1(G,J_{G/H})=0$ 
%for primitive $G=16Tm\leq S_{16}$ (see Section 8).  
%%(Appendix) of the arXiv version (arXiv:1910.01469) of \cite{HKY22} 
%
\begin{corollary}\label{cor1.5}
%Let $k$ be a global field, %number field, 
%$K/k$ be a field extension of degree $n$ 
%and $L/k$ be the Galois closure of $K/k$. 
%Assume that $G={\rm Gal}(L/k)\simeq M_{11}$  
%s the Mathieu group of degree $11$ and $H={\rm Gal}(L/K)\lneq G$.
%Let $T=R^{(1)}_{K/k}(\bG_m)$ be the norm one torus of $K/k$ 
%of dimension $n-1$ and $X$ be a smooth $k$-compactification of $T$. 
Let the notation be as in Theorem \ref{thmain2}. 
Then the Tamagawa number 
$\tau(T)=1/|\Sha(T)|=1$, $1/2$ or $1/4$ where $\Sha(T)$ is given as in 
Table $2$-$1$ and Table $2$-$2$. 
\end{corollary}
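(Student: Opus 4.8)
The plan is to deduce Corollary~\ref{cor1.5} directly from Theorem~\ref{thmain2} together with Ono's formula $\tau(T)=|H^1(G,J_{G/H})|/|\Sha(T)|$ for $T=R^{(1)}_{K/k}(\bG_m)$ (see Ono \cite{Ono63}, \cite{Ono65}, Voskresenskii \cite[Theorem 2, page 146]{Vos98}). The key input besides Theorem~\ref{thmain2} is the vanishing $H^1(G,J_{G/H})=0$ for norm one tori, which holds because $J_{G/H}$ fits into the exact sequence $0\to J_{G/H}\to \bZ[G/H]\to \bZ\to 0$ of $G$-lattices; taking $G$-cohomology and using that $\bZ[G/H]$ is a permutation (hence cohomologically trivial in degree one after the standard Shapiro reduction to $H^1(H,\bZ)=\mathrm{Hom}(H,\bZ)=0$) argument yields $H^1(G,J_{G/H})\cong \widehat{H^{-1}}(G,\bZ)=0$ since $\widehat{H}^{-1}(G,\bZ)=0$. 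Thus $\tau(T)=1/|\Sha(T)|$ for all $22$ primitive cases.

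First I would record that, by Theorem~\ref{thmain2}, the Shafarevich--Tate group $\Sha(T)$ is $0$ for all $22$ primitive $G=16Tm$ except the six cases with $H^1(k,{\rm Pic}\,\overline{X})\neq 0$, so $\tau(T)=1$ in those $16$ cases. Next, for the four cases $G=16T708$, $16T1329$, $16T1654$, $16T1753$ with $H^1(k,{\rm Pic}\,\overline{X})\simeq\bZ/2\bZ$, Theorem~\ref{thmain2} gives $\Sha(T)\leq\bZ/2\bZ$, with $\Sha(T)=0$ or $\bZ/2\bZ$ according to the explicit decomposition-group conditions in Table~$2$-$1$; hence $\tau(T)\in\{1,1/2\}$. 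Finally, for $G=16T178$ and $G=16T1080$ with $H^1(k,{\rm Pic}\,\overline{X})\simeq(\bZ/2\bZ)^{\oplus 2}$, Theorem~\ref{thmain2} and Table~$2$-$2$ give $\Sha(T)\leq(\bZ/2\bZ)^{\oplus 2}$ with the precise value $0$, $\bZ/2\bZ$ or $(\bZ/2\bZ)^{\oplus 2}$ determined by the stated local conditions; so $\tau(T)\in\{1,1/2,1/4\}$. Combining the three ranges gives $\tau(T)\in\{1,1/2,1/4\}$ as claimed.

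There is essentially no obstacle here: Corollary~\ref{cor1.5} is a formal consequence of Theorem~\ref{thmain2} once $H^1(G,J_{G/H})=0$ is in hand, and the latter is standard for norm one tori. The only point requiring a line of care is confirming that the order $|\Sha(T)|$ read off from Tables~$2$-$1$ and $2$-$2$ exhausts exactly the divisors $1,2,4$ of $|H^1(k,{\rm Pic}\,\overline{X})|$ that can be realized; this is immediate from the ``if and only if'' clauses in those tables, which show that each listed value of $\Sha(T)$ does occur for a suitable choice of local behaviour, and that no larger value occurs since $\Sha(T)$ is a subquotient of $H^1(k,{\rm Pic}\,\overline{X})$ via Theorem~\ref{thV}. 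Hence $\tau(T)=1/|\Sha(T)|\in\{1,1/2,1/4\}$, completing the proof.
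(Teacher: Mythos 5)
The easy half of your argument (reading $|\Sha(T)|\in\{1,2,4\}$ off Theorem \ref{thmain2} and Tables $2$-$1$, $2$-$2$) is fine, but the key step, the vanishing $H^1(G,J_{G/H})=0$, is both misstated and incorrectly justified. First, your exact sequence is the wrong one: $J_{G/H}$ is the dual of the augmentation ideal and occurs as a quotient in $0\to\bZ\to\bZ[G/H]\to J_{G/H}\to 0$, not as a submodule in $0\to J_{G/H}\to\bZ[G/H]\to\bZ\to 0$ (the lattice in your sequence is $I_{G/H}$, and for it one computes $H^1(G,I_{G/H})\simeq\bZ/16\bZ\neq 0$; likewise, running Tate cohomology on your sequence would give $\bZ/16\bZ$, not $\widehat H^{-1}(G,\bZ)$). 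Second, and more seriously, the vanishing is not ``standard for norm one tori.'' From the correct sequence and Shapiro's lemma one gets the exact sequence $0\to H^1(G,J_{G/H})\to{\rm Hom}(G,\bQ/\bZ)\xrightarrow{\rm res}{\rm Hom}(H,\bQ/\bZ)$, so $H^1(G,J_{G/H})$ is the group of characters of $G$ killing the normal closure of $H$; in the Galois case $H=1$ this is all of $G^{ab}$, which the paper itself records in Section \ref{S2} as $H^1(G,J_G)\simeq G^{ab}$. So your claimed general vanishing is false, and the corollary's formula $\tau(T)=1/|\Sha(T)|$ is not a formal consequence of Ono's formula alone.

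What is actually needed, and what the paper supplies, is an argument special to the $22$ primitive cases: if $f\in{\rm Hom}(G,\bQ/\bZ)$ satisfies $f|_H=0$, then $H\leq{\rm Ker}\,f\lhd G$; since $H={\rm Stab}_1(G)$ is maximal in $G$ (this is exactly primitivity) and is not normal, it follows that ${\rm Ker}\,f=G$, hence $f=0$, the restriction map ${\rm Hom}(G,\bQ/\bZ)\to{\rm Hom}(H,\bQ/\bZ)$ is injective, and therefore $H^1(G,J_{G/H})=0$ and $\tau(T)=1/|\Sha(T)|$. Your outline omits precisely this maximality/non-normality step, which is where the hypothesis that $G$ is primitive is used; inserting it (and correcting the direction of the exact sequence) repairs the proof.
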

%

%%%%%%%%%%%%%%
We organize this paper as follows. 
In Section \ref{S2}, 
we prepare basic definitions and known results 
about algebraic $k$-tori and norm one tori.  
In particular, 
we recall the definition of a flabby resolution of a $G$-lattice  
which is a basic tool to investigate algebraic $k$-tori. 
In Section \ref{S3}, 
we recall some known results about the Hasse norm principle. 
In Section \ref{S4}, we give the proof of Theorem \ref{thmain1}. 
In Section \ref{S5}, we recall 
Drakokhrust and Platonov's method for 
the Hasse norm principle for $K/k$ 
and results in Hoshi, Kanai and Yamasaki \cite[Section 6]{HKY22}. 
In Section \ref{S6}, we prove Theorem \ref{thmain2} 
by using Drakokhrust and Platonov's method and 
some new useful functions of GAP \cite{GAP}. 
A proof of Corollary \ref{cor1.5} is also given. 
In Section \ref{S7}, 
GAP algorithms are given 
which are also available 
as in \cite{Norm1ToriHNP}. 
%from\\ 
%\url{https://www.math.kyoto-u.ac.jp/~yamasaki/Algorithm/Norm1ToriHNP/}.
%
\begin{acknowledgment}
We thank the referee for very careful reading of the manuscript 
and also pointing out an erratum in the last line of Theorem \ref{thDP2}. 
\end{acknowledgment}

%%%%%%%%%%%%%%%%%%%%%%%%%%%%%%%%%%%%%%%%%%%%%%%%%%%%%%%%%%%%%%%%%%%%%%%%%%
\section{Rationality problem for algebraic tori and norm one tori}\label{S2}

Let $k$ be a field and $K$ %and $K^\prime$ 
be a finitely generated field extension of $k$. 
A field $K$ is called {\it rational over $k$} 
(or {\it $k$-rational} for short) 
if $K$ is purely transcendental over $k$, 
i.e. $K$ is isomorphic to $k(x_1,\ldots,x_n)$, 
the rational function field over $k$ with $n$ variables $x_1,\ldots,x_n$ 
for some integer $n$. 
$K$ is called {\it stably $k$-rational} 
if $K(y_1,\ldots,y_m)$ is $k$-rational for some algebraically 
independent elements $y_1,\ldots,y_m$ over $K$. 
Two fields 
$K$ and $K^\prime$ are called {\it stably $k$-isomorphic} if 
$K(y_1,\ldots,y_m)\simeq K^\prime(z_1,\ldots,z_n)$ over $k$ 
for some algebraically independent elements $y_1,\ldots,y_m$ over $K$ 
and $z_1,\ldots,z_n$ over $K^\prime$. 
When $k$ is an infinite field, 
%%%
$K$ is called {\it retract $k$-rational} 
if there is a $k$-algebra $R$ contained in $K$ such that 
(i) $K$ is the quotient field of $R$, and (ii) 
the identity map $1_R : R\rightarrow R$ factors through a localized 
polynomial ring over $k$, i.e. there is an element $f\in k[x_1,\ldots,x_n]$, 
which is the polynomial ring over $k$, and there are $k$-algebra 
homomorphisms $\varphi : R\rightarrow k[x_1,\ldots,x_n][1/f]$ 
and $\psi : k[x_1,\ldots,x_n][1/f]\rightarrow R$ satisfying 
$\psi\circ\varphi=1_R$ (cf. Saltman \cite{Sal84}). 
$K$ is called {\it $k$-unirational} 
if $k\subset K\subset k(x_1,\ldots,x_n)$ for some integer $n$. 
It is not difficult to see that
\begin{center}
$k$-rational \ \ $\Rightarrow$\ \ 
stably $k$-rational\ \ $\Rightarrow$ \ \ 
retract $k$-rational\ \ $\Rightarrow$ \ \ 
$k$-unirational.
\end{center} 

%%%%%%%%%%%%%%%
Let $\overline{k}$ be a fixed separable closure of the base field $k$. 
Let $T$ be an algebraic $k$-torus, 
i.e. a group $k$-scheme with fiber product (base change) 
$T\times_k \overline{k}=
T\times_{{\rm Spec}\, k}\,{\rm Spec}\, \overline{k}
\simeq (\bG_{m,\overline{k}})^n$; 
$k$-form of the split torus $(\bG_m)^n$. 
%%%%%%%%%%%%%%%%%%%%%%%%%%%%%%%%%%%
Then there exists a finite Galois extension $K/k$ 
with Galois group $G={\rm Gal}(K/k)$ such that 
$T$ splits over $K$: $T\times_k K\simeq (\bG_{m,K})^n$. 
It is also well-known that 
there is the duality between the category of $G$-lattices, 
i.e. finitely generated $\bZ[G]$-modules which are $\bZ$-free 
as abelian groups, 
and the category of algebraic $k$-tori which split over $K$ 
(see Ono \cite[Section 1.2]{Ono61}, 
Voskresenskii \cite[page 27, Example 6]{Vos98} and 
Knus, Merkurjev, Rost and Tignol \cite[page 333, Proposition 20.17]{KMRT98}). 
Indeed, if $T$ is an algebraic $k$-torus, then the character 
module $\widehat{T}={\rm Hom}(T,\bG_m)$ of $T$ 
may be regarded as a $G$-lattice. 
Let $X$ be a smooth $k$-compactification of $T$, 
i.e. smooth projective $k$-variety $X$ 
containing $T$ as a dense open subvariety, 
and $\overline{X}=X\times_k\overline{k}$. 
There exists such a smooth $k$-compactification of an algebraic $k$-torus $T$ 
over any field $k$ (due to Hironaka \cite{Hir64} for ${\rm char}\, k=0$, 
see Colliot-Th\'{e}l\`{e}ne, Harari and Skorobogatov 
\cite[Corollaire 1]{CTHS05} for any field $k$). 

%%%%%%%%%%%%%%%%%%%%%%%%%%%%%%%
An algebraic $k$-torus 
$T$ is said to be {\it $k$-rational} (resp. {\it stably $k$-rational}, 
{\it retract $k$-rational}, {\it $k$-unirational}) 
if the function field $k(T)$ of $T$ over $k$ is $k$-rational 
(resp. stably $k$-rational, retract $k$-rational, $k$-unirational). 
Two algebraic $k$-tori $T$ and $T^\prime$ 
are said to be 
{\it birationally $k$-equivalent $(k$-isomorphic$)$} 
%(resp. {\it stably birationally $k$-equivalent}), 
%denoted by $T\approx T^\prime$ 
%(resp. $T\stackrel{\rm s.b.}{\approx} T^\prime$), 
if their function fields $k(T)$ and $k(T^\prime)$ are 
$k$-isomorphic. %(resp. stably $k$-equivalent). 
For an equivalent definition in the language of algebraic geometry, 
see e.g. 
Manin \cite{Man86}, 
Manin and Tsfasman, \cite{MT86}, 
Colliot-Th\'{e}l\`{e}ne and Sansuc \cite[Section 1]{CTS07}, 
Beauville \cite{Bea16}, 
Merkurjev \cite[Section 3]{Mer17}. 

Let $L/k$ be a finite Galois extension with Galois group $G={\rm Gal}(L/k)$. 
Let $M=\bigoplus_{1\leq i\leq n}\bZ\cdot u_i$ be a $G$-lattice with 
a $\bZ$-basis $\{u_1,\ldots,u_n\}$, 
i.e. finitely generated $\bZ[G]$-module 
which is $\bZ$-free as an abelian group. 
Let $G$ act on the rational function field $L(x_1,\ldots,x_n)$ 
over $L$ with $n$ variables $x_1,\ldots,x_n$ by 
\begin{align*}
\sigma(x_i)=\prod_{j=1}^n x_j^{a_{i,j}},\quad 1\leq i\leq n%\label{acts}
\end{align*}
for any $\sigma\in G$, when $\sigma (u_i)=\sum_{j=1}^n a_{i,j} u_j$, 
$a_{i,j}\in\bZ$. 
The field $L(x_1,\ldots,x_n)$ with this action of $G$ will be denoted 
by $L(M)$.
%An algebraic torus $T$ over $k$ is a group $k$-scheme 
%such that $T\otimes_k \overline{k}\simeq \bG_{m,\overline{k}}^n$. 
There is the duality between the category of $G$-lattices 
and the category of algebraic $k$-tori which split over $L$ 
(see \cite[Section 1.2]{Ono61}, \cite[page 27, Example 6]{Vos98}). 
%A torus $T$ corresponds in this duality to the dual $X(T)^\circ$ 
%of the character group $X(T)={\rm Hom}(T,\bG_m)$. 
In fact, if $T$ is an algebraic $k$-torus, then the character 
group $\widehat{T}={\rm Hom}(T,\bG_m)$ of $T$ 
may be regarded as a $G$-lattice. 
Conversely, for a given $G$-lattice $M$, there exists an algebraic 
$k$-torus $T={\rm Spec}(L[M]^G)$ which splits over $L$ 
such that $\widehat{T}\simeq M$ as $G$-lattices 
where $L[M]^G$ is the invariant ring of $L[M]=L[x_1,\ldots,x_n]$ under the action of $G$. 

The invariant field $L(M)^G$ of $L(M)$ under the action of $G$ 
may be identified with the function field $k(T)$ 
of the algebraic $k$-torus $T$ where $\widehat{T}\simeq M$. 
Note that the field $L(M)^G$ is always $k$-unirational 
(see \cite[page 40, Example 21]{Vos98}). 
Isomorphism classes of tori of dimension $n$ over $k$ correspond bijectively 
to the elements of the set $H^1(k,\GL_n(\bZ)):=H^1(\mathcal{G},\GL_n(\bZ))$ 
where $\mathcal{G}={\rm Gal}(\overline{k}/k)$ since 
${\rm Aut}(\bG_m^n)=\GL_n(\bZ)$. 
The $k$-torus $T$ of dimension $n$ is determined uniquely by the integral 
representation $h : \mathcal{G}\rightarrow \GL_n(\bZ)$ up to conjugacy, 
and the group $G=h(\mathcal{G})$ is a finite subgroup of $\GL_n(\bZ)$ 
(see \cite[page 57, Section 4.9]{Vos98})). 
The minimal splitting field $L$ of $T$ corresponds to 
$\mathcal{H}$ where 
$\rho: \mathcal{G}\rightarrow G$ and $\mathcal{H}={\rm Ker}(\rho)$ 
with $G\simeq \mathcal{G}/\mathcal{H}$.

%Let $M$ be a $G$-lattice. 
A $G$-lattice $M$ is called {\it permutation} $G$-lattice 
if $M$ has a $\bZ$-basis permuted by $G$, 
i.e. $M\simeq \oplus_{1\leq i\leq m}\bZ[G/H_i]$ 
for some subgroups $H_1,\ldots,H_m$ of $G$. 
$M$ is called {\it stably permutation} 
$G$-lattice if $M\oplus P\simeq P^\prime$ 
for some permutation $G$-lattices $P$ and $P^\prime$. 
$M$ is called {\it invertible} 
if it is a direct summand of a permutation $G$-lattice, 
i.e. $P\simeq M\oplus M^\prime$ for some permutation $G$-lattice 
$P$ and a $G$-lattice $M^\prime$. 
$M$ is called {\it flabby} (or {\it flasque}) if $\widehat H^{-1}(H,M)=0$ 
for any subgroup $H$ of $G$ where $\widehat H$ is the Tate cohomology. 
$M$ is called {\it coflabby} (or {\it coflasque}) if $H^1(H,M)=0$
for any subgroup $H$ of $G$. 
We say that two $G$-lattices $M_1$ and $M_2$ are {\it similar} 
if there exist permutation $G$-lattices $P_1$ and $P_2$ such that 
$M_1\oplus P_1\simeq M_2\oplus P_2$. 
The set of similarity classes becomes a commutative monoid 
with respect to the sum $[M_1]+[M_2]:=[M_1\oplus M_2]$ 
and the zero $0=[P]$ where $P$ is a permutation $G$-lattice. 
For a $G$-lattice $M$, there exists a short exact sequence of $G$-lattices 
$0 \rightarrow M \rightarrow P \rightarrow F \rightarrow 0$
where $P$ is permutation and $F$ is flabby which is called a 
{\it flabby resolution} of $M$ 
(see Endo and Miyata \cite[Lemma 1.1]{EM75}, 
Colliot-Th\'el\`ene and Sansuc \cite[Lemma 3]{CTS77}, 
Manin \cite[Appendix, page 286]{Man86}). 
The similarity class $[F]$ of $F$ is determined uniquely 
and is called {\it the flabby class} of $M$. 
We denote the flabby class $[F]$ of $M$ by $[M]^{fl}$. 
We say that $[M]^{fl}$ is invertible if $[M]^{fl}=[E]$ for some 
invertible $G$-lattice $E$. 
For a $G$-lattice $M$, 
it is not difficult to see that 
\begin{align*}
\textrm{permutation}\ \ 
\Rightarrow\ \ 
&\textrm{stably\ permutation}\ \ 
\Rightarrow\ \ 
\textrm{invertible}\ \ 
\Rightarrow\ \ 
\textrm{flabby\ and\ coflabby}\\
&\hspace*{8mm}\Downarrow\hspace*{34mm} \Downarrow\\
&\hspace*{7mm}[M]^{fl}=0\hspace*{10mm}\Rightarrow\hspace*{5mm}[M]^{fl}\ 
\textrm{is\ invertible}.
\end{align*}
The above implications in each step cannot be reversed 
(see, for example, \cite[Section 1]{HY17}). 
For some basic facts of flabby (flasque) $G$-lattices, 
see \cite{CTS77}, \cite{Swa83}, \cite[Chapter 2]{Vos98}, \cite[Chapter 2]{Lor05}, \cite{Swa10}. 
%
%For algebraic $k$-tori $T$, we see that 
%$[\widehat{T}]^{fl}=[{\rm Pic}\,\overline{X}]$ 
%where $X$ is a smooth $k$-compactification of $T$, 
%i.e. smooth projective $k$-variety $X$ containing $T$ as a dense 
%open subvariety, 
%and $\overline{X}=X\times_k\overline{k}$ 
%(see Voskresenskii \cite[Section 4, page 1213]{Vos69}, \cite[Section 3, page 7]{Vos70}, \cite{Vos74}, \cite[Section 4.6]{Vos98}, 
%Kunyavskii \cite[Theorem 1.9]{Kun07} and Colliot-Th\'el\`ene \cite[Theorem 5.1, page 19]{CT07} for any field $k$). 

%%%%%%%%%%
The flabby class $[M]^{fl}=[\widehat{T}]^{fl}$ 
plays a crucial role in the rationality problem 
for $L(M)^G\simeq k(T)$ 
as follows (see %Voskresenskii's fundamental book 
Colliot-Th\'el\`ene and Sansuc \cite[Section 2]{CTS77}, 
\cite[Proposition 7.4]{CTS87}, 
Voskresenskii \cite[Section 4.6]{Vos98}, 
Kunyavskii \cite[Theorem 1.7]{Kun07}, 
Colliot-Th\'el\`ene \cite[Theorem 5.4]{CT07}, 
Hoshi and Yamasaki \cite[Section 1]{HY17}, 
see also e.g. Swan \cite{Swa83}, 
Kunyavskii \cite[Section 2]{Kun90}, 
Lemire, Popov and Reichstein \cite[Section 2]{LPR06}, 
Kang \cite{Kan12}, Yamasaki \cite{Yam12}). 

%%%%%%%%%%%%%%%%%%%%%%%%%%%%%%%%%%%%%%%%%%%%%%%%%%%%%%%%%%%%%%%
%
\begin{theorem}[{Voskresenskii \cite[Section 4, page 1213]{Vos69}, \cite[Section 3, page 7]{Vos70}, see also 
\cite{Vos74}, \cite[Section 4.6]{Vos98}, Kunyavskii \cite[Theorem 1.9]{Kun07} and 
Colliot-Th\'el\`ene \cite[Theorem 5.1, page 19]{CT07} for any field $k$}]\label{thVos69}
Let $k$ be a field 
and $\mathcal{G}={\rm Gal}(\overline{k}/k)$. 
Let $T$ be an algebraic $k$-torus, 
$X$ be a smooth $k$-compactification of $T$ 
and $\overline{X}=X\times_k\overline{k}$. 
Then there exists an exact sequence of $\mathcal{G}$-lattices 
\begin{align*}
0\to \widehat{T}\to \widehat{Q}\to {\rm Pic}\,\overline{X}\to 0%\label{seM}
\end{align*}
where $\widehat{Q}$ is permutation 
and ${\rm Pic}\ \overline{X}$ is flabby. 
\end{theorem}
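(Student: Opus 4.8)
\noindent\emph{Proof proposal.}
The plan is to build the sequence out of the standard exact sequence relating units, codimension-one cycles and the Picard group of the smooth variety $\overline{X}$, and then to verify the two asserted properties --- that $\widehat{Q}$ is permutation and that ${\rm Pic}\,\overline{X}$ is flabby --- one at a time. First I fix a finite Galois extension $L/k$ over which $T$ splits, with $G={\rm Gal}(L/k)$; then $\mathcal{G}={\rm Gal}(\overline{k}/k)$ acts on all the lattices below through its quotient $G$, so it is enough to work with $G$. Let $D_1,\dots,D_r$ be the codimension-one irreducible components of the closed set $\overline{X}\setminus\overline{T}$. Since $X$ and $T$ are defined over $k$, the group $\mathcal{G}$ permutes $\{D_1,\dots,D_r\}$.

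Because $\overline{X}$ is smooth, ${\rm Pic}\,\overline{X}$ is its divisor class group, and the excision sequence for the dense open immersion $\overline{T}\hookrightarrow\overline{X}$, after dividing out the constants $\overline{k}^{\times}$ common to the two unit groups, reads
\[
0\to \overline{k}[\overline{X}]^{\times}/\overline{k}^{\times}\to \overline{k}[\overline{T}]^{\times}/\overline{k}^{\times}\xrightarrow{{\rm div}_{\overline{X}}}\bigoplus_{i=1}^{r}\bZ\,D_i\to {\rm Pic}\,\overline{X}\to {\rm Pic}\,\overline{T}\to 0 .
\]
I then invoke three standard facts: $X$ is proper and geometrically integral over $k$, whence $\overline{k}[\overline{X}]^{\times}=\overline{k}^{\times}$; and $\overline{T}\simeq(\bG_{m,\overline{k}})^{n}$, whence $\overline{k}[\overline{T}]^{\times}/\overline{k}^{\times}\simeq\widehat{T}$ (a character of $\overline{T}$ is exactly a rational function on $\overline{X}$ with divisor supported on the boundary) and ${\rm Pic}\,\overline{T}=0$. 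Writing $\widehat{Q}:=\bigoplus_{i}\bZ\,D_i$, the sequence collapses to $0\to\widehat{T}\to\widehat{Q}\to{\rm Pic}\,\overline{X}\to 0$, an exact sequence of $\mathcal{G}$-lattices. Here $\widehat{Q}$ is a permutation lattice essentially by construction, the $\mathcal{G}$-set $\{D_1,\dots,D_r\}$ being a $\bZ$-basis; and $\widehat{T}\hookrightarrow\widehat{Q}$ is injective because a character with empty divisor on the proper variety $\overline{X}$ is constant, hence trivial in $\widehat{T}$.

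It remains to prove that ${\rm Pic}\,\overline{X}$ is flabby, i.e. $\widehat{H}^{-1}(H,{\rm Pic}\,\overline{X})=0$ for every subgroup $H\leq G$. From the Tate cohomology long exact sequence of $0\to\widehat{T}\to\widehat{Q}\to{\rm Pic}\,\overline{X}\to0$ together with $\widehat{H}^{-1}(H,\widehat{Q})=0$ (a permutation lattice, and its restriction to any subgroup, has trivial $\widehat{H}^{-1}$), this is equivalent to the injectivity of $\widehat{H}^{0}(H,\widehat{T})\to\widehat{H}^{0}(H,\widehat{Q})$. Dualizing --- using that $\widehat{Q}$ is self-dual, that $X_{*}(T):={\rm Hom}(\widehat{T},\bZ)\simeq{\rm Hom}(\bG_{m},T)$, and the cup-product duality $\widehat{H}^{0}(H,M)\simeq\widehat{H}^{0}(H,{\rm Hom}(M,\bZ))^{\wedge}$ (Pontryagin dual) for a lattice $M$ --- this injectivity follows from the surjectivity, for every $H\leq G$, of the map
\[
\widehat{Q}^{\,H}\longrightarrow X_{*}(T)^{H},\qquad D_i\longmapsto\bigl(\chi\mapsto{\rm ord}_{D_i}(\chi)\bigr).
\]
I expect this surjectivity to be the real content; everything above it is formal. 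To establish it --- and this is exactly where the properness of $\overline{X}$ enters --- take $\lambda\in X_{*}(T)^{H}$, i.e. a one-parameter subgroup $\lambda\colon\bG_{m}\to T$ defined over $L^{H}$. By properness and the valuative criterion, $\lambda$ extends to a morphism $\widetilde{\lambda}\colon\bP^{1}\to\overline{X}$ over $L^{H}$. If $\widetilde{\lambda}(0)$ lay in $T$, then $\lambda$ would extend to a morphism $\bA^{1}\to T$, forcing $\langle\lambda,\chi\rangle\geq0$ for every $\chi\in\widehat{T}$ and hence, $\widehat{T}$ being a lattice, $\lambda=0$. So for $\lambda\neq0$ the point $\widetilde{\lambda}(0)$ lies on the boundary; pulling ${\rm div}_{\overline{X}}(\chi)$ back along $\widetilde{\lambda}$ and comparing orders at $t=0$ gives $\langle\lambda,\chi\rangle=\sum_{i}m_i\,{\rm ord}_{D_i}(\chi)$ for all $\chi$, with $m_i\geq0$ the intersection multiplicity of $\widetilde{\lambda}$ with $D_i$ at $\widetilde{\lambda}(0)$. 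Thus $\lambda$ is the image of $\sum_{i}m_i D_i$; and since $\widetilde{\lambda}(0)$ is an $L^{H}$-point, $H$ permutes the components $D_i$ through it and preserves the $m_i$, so $\sum_i m_i D_i\in\widehat{Q}^{\,H}$. This yields the required surjectivity, hence the flabbiness of ${\rm Pic}\,\overline{X}$. Note finally that the sequence $0\to\widehat{T}\to\widehat{Q}\to{\rm Pic}\,\overline{X}\to0$ so produced is, by definition, a flabby resolution of $\widehat{T}$, and thus also records $[\widehat{T}]^{fl}=[{\rm Pic}\,\overline{X}]$, the form in which this result is used in the sequel.
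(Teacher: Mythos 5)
The paper states this theorem without proof, citing Voskresenskii and Colliot-Th\'el\`ene--Sansuc; your argument is correct and is essentially the classical proof from those references: the units--divisors--Picard localization sequence for $\overline{T}\subset\overline{X}$ produces the permutation resolution, and flabbiness reduces by Tate-cohomology duality to the surjectivity of $\widehat{Q}^{\,H}\to X_{*}(T)^{H}$ for every $H$, which you establish by the standard specialization of one-parameter subgroups to the boundary via the valuative criterion. The only cosmetic point is at the start: you should (harmlessly) enlarge $L$ so that the boundary components $D_i$ are also defined over $L$, since splitting $T$ alone does not a priori guarantee that $\mathcal{G}$ acts on $\{D_1,\dots,D_r\}$ through ${\rm Gal}(L/k)$.
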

We have 
$H^1(k,{\rm Pic}\,\overline{X})\simeq H^1(G,{\rm Pic}\, X_K)$ 
where $K$ is the splitting field of $T$, $G={\rm Gal}(K/k)$ and 
$X_K=X\times_k K$. 
Hence Theorem \ref{thVos69} says that 
for $G$-lattices $M=\widehat{T}$ and $P=\widehat{Q}$, 
the exact sequence 
$0\to M\to P\to {\rm Pic}\, X_K\to 0$ 
gives a flabby resolution of 
$M$ and the flabby class of $M$ is 
$[M]^{fl}=[{\rm Pic}\ X_K]$ as $G$-lattices. 

%%%%%%%%%%%%%%%%%%%%%
%
\begin{theorem}%[Endo and Miyata, Voskresenskii, Saltman]
\label{th2-1}
Let $L/k$ be a finite Galois extension with Galois group $G={\rm Gal}(L/k)$ 
and $M$ and $M^\prime$ be $G$-lattices. 
Let $T$ and $T^\prime$ be algebraic $k$-tori with $\widehat{T}\simeq M$ 
and $\widehat{T}^\prime\simeq M^\prime$, 
i.e. $L(M)^G\simeq k(T)$ and $L(M^\prime)^G\simeq k(T^\prime)$.\\
{\rm (i)} $(${\rm Endo and Miyata} \cite[Theorem 1.6]{EM73}$)$ 
$[M]^{fl}=0$ if and only if $k(T)$ is stably $k$-rational.\\
{\rm (ii)} $(${\rm Voskresenskii} \cite[Theorem 2]{Vos74}$)$ 
$[M]^{fl}=[M^\prime]^{fl}$ if and only if $k(T)$ and $k(T^\prime)$ 
are stably $k$-isomorphic.\\
{\rm (iii)} $(${\rm Saltman} \cite[Theorem 3.14]{Sal84}$)$ 
$[M]^{fl}$ is invertible if and only if $k(T)$ is 
retract $k$-rational.
\end{theorem}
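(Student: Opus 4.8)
\medskip
\noindent\textbf{Proof proposal.}
The three equivalences are facets of one principle: up to stable $k$-isomorphism, the field $k(T)=L(M)^G$ depends only on the flabby class $[M]^{fl}$, which by Theorem~\ref{thVos69} is realized as $[{\rm Pic}\,\overline{X}]$. The plan is to reduce everything to two mechanisms. \emph{(A) Absorption of quasi-split tori.} Write $T_N$ for the $L$-split $k$-torus with character lattice $N$, so that $T=T_M$. If $P=\bigoplus_i\bZ[G/H_i]$ is a permutation $G$-lattice then $T_P\simeq\prod_i R_{L^{H_i}/k}(\bG_m)$ is an open subvariety of the affine space $\prod_i R_{L^{H_i}/k}(\bA^1)$, hence $k$-rational; and for any $G$-lattice $N$ the field $L(N\oplus P)^G=k(T_N\times_k T_P)$ is purely transcendental over $L(N)^G=k(T_N)$, since $T_P\times_k{\rm Spec}\,k(T_N)$ is a quasi-split $k(T_N)$-torus and hence $k(T_N)$-rational. \emph{(B) Hilbert~90.} If a $k$-torus $\mathcal{T}$ sits in an exact sequence $1\to T_Q\to\mathcal{T}\to T'\to 1$ with $T_Q$ quasi-split, then $\mathcal{T}\to T'$ is a $T_Q$-torsor whose generic fibre is trivial (because $H^1(k(T'),T_Q)=0$ by Hilbert's Theorem~90 and Shapiro's lemma), so $k(\mathcal{T})$ is purely transcendental over $k(T')$. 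Mechanism (A) gives at once the Endo--Miyata reduction: if $M$ and $M'$ are \emph{similar}, say $M\oplus P_1\simeq M'\oplus P_2$ with $P_1,P_2$ permutation, then $L(M)^G$ and $L(M')^G$ are stably $k$-isomorphic, both being purely transcendental over the common field $L(M\oplus P_1)^G=L(M'\oplus P_2)^G$.

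For part (i), I would prove $[M]^{fl}=0\Rightarrow k(T)$ stably $k$-rational directly. Adding a permutation summand to a flabby resolution $0\to M\to P_0\to F\to 0$ and using that $F$ is then stably permutation, one obtains an exact sequence of $G$-lattices $0\to M\to P\to Q\to 0$ with \emph{both} $P$ and $Q$ permutation. Applying the contravariant exact functor ${\rm Hom}(-,\bG_m)$ gives an exact sequence of $k$-tori $1\to T_Q\to T_P\to T\to 1$, so by (B) the field $k(T_P)$ is purely transcendental over $k(T)$; since $k(T_P)=L(P)^G$ is $k$-rational by (A), the field $k(T)$ is stably $k$-rational. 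The reverse implication of (i) then follows from part (ii): a stably $k$-rational $k(T)$ is stably $k$-isomorphic to $k(\bG_m^n)=L(\bZ^n)^G$ with $\bZ^n$ a trivial, hence permutation, $G$-lattice, so $[M]^{fl}=[\bZ^n]^{fl}=0$.

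For part (ii), I would prove $[M]^{fl}=[M']^{fl}\Rightarrow$ (stably $k$-isomorphic) by a fibre-product construction. Take flabby resolutions $0\to M\to P\to F\to 0$ and $0\to M'\to P'\to F'\to 0$; since $[M]^{fl}=[M']^{fl}$, the flabby parts $F$ and $F'$ are similar, so after adding permutation summands the two resolutions may be assumed to read $0\to M\to Q_1\to F''\to 0$ and $0\to M'\to Q_2\to F''\to 0$ with $Q_1,Q_2$ permutation and the \emph{same} cokernel $F''$. The fibre product $\mathcal{P}=Q_1\times_{F''}Q_2$ then sits in two short exact sequences $0\to M'\to\mathcal{P}\to Q_1\to 0$ and $0\to M\to\mathcal{P}\to Q_2\to 0$; dualizing these and applying (B) to the quasi-split tori $T_{Q_1}$ and $T_{Q_2}$ shows that $k(T_{\mathcal{P}})$ is purely transcendental over both $k(T)$ and $k(T')$, whence these are stably $k$-isomorphic. (Part~(i)$\,\Leftarrow$ is essentially the case $M'=\bZ^n$ of this argument, but the direct proof above is shorter.) The converse of (ii) --- the \emph{stable $k$-birational invariance} of $[M]^{fl}$ --- is where the real content lies: here I would leave the category of tori, pass to a smooth $k$-compactification $X$ of $T$, use Theorem~\ref{thVos69} to write $[M]^{fl}=[{\rm Pic}\,\overline{X}]$, and then invoke Colliot-Th\'{e}l\`{e}ne--Sansuc's comparison showing that, for smooth projective $k$-varieties, the class of ${\rm Pic}\,\overline{X}$ in the monoid of $G$-lattices modulo permutation lattices is unchanged by a blow-up (exceptional divisors contribute only a permutation summand) and by $X\mapsto X\times\bP^n$ (which adds a trivial $\bZ$ summand).

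For part (iii), $[M]^{fl}$ invertible means that the flabby part of a resolution is similar to, hence itself is, an invertible lattice $E$, i.e. $E\oplus E'\simeq P_0$ is a permutation $G$-lattice; I would follow Saltman's argument, using that $E$ is then a retract of $P_0$ --- equivalently, $T_E$ is a retract of the quasi-split torus $T_{P_0}$ --- to realize the required retraction at the level of the torsor $T_P\to T$ and thereby extract the $k$-algebra homomorphisms $\varphi,\psi$ through a localized polynomial ring demanded by the definition of retract $k$-rationality. Conversely, retract $k$-rationality forces, by Saltman's specialization of the structure morphism of a smooth model, that ${\rm Pic}\,\overline{X}$ be a direct summand of a permutation $G$-lattice, i.e. that $[M]^{fl}$ be invertible. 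The single hardest ingredient is this stable $k$-birational invariance of the flabby class needed for (ii)$\,\Leftarrow$, equivalently for (i)$\,\Rightarrow$: everything internal to the category of tori is formal once (A) and (B) are in hand, but controlling ${\rm Pic}\,\overline{X}$ under blow-ups genuinely requires the geometry of a smooth projective model rather than lattice bookkeeping alone.
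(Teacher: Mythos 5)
The paper offers no proof of this theorem: it is stated as a survey of known results, with the three parts attributed directly to Endo--Miyata, Voskresenskii and Saltman, so there is no in-paper argument to compare yours against. Your sketch is a correct reconstruction of the standard proofs from those sources and from Colliot-Th\'el\`ene--Sansuc \cite{CTS77}: mechanisms (A) and (B) are exactly the two lemmas on which the whole theory rests; the dualized sequence $1\to T_Q\to T_P\to T\to 1$ obtained from a resolution with permutation cokernel gives (i)$\Rightarrow$; the fibre-product lattice $Q_1\times_{F''}Q_2$ with its two projections is the classical device for the forward direction of (ii); and you correctly isolate the stable birational invariance of the flabby class as the one genuinely geometric ingredient. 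Two points of precision. First, for that invariance you should not argue via a factorization of the stable birational equivalence into blow-ups and products with $\bP^n$ --- in dimension $>2$ that is the weak factorization theorem, which is far deeper than what is needed and unavailable in positive characteristic. The standard route resolves the graph of the birational map to obtain a smooth projective $Z$ with proper birational morphisms onto both models and uses that, for such a morphism onto a smooth target, ${\rm Pic}\,\overline{Z}$ is the direct sum of the pullback of ${\rm Pic}\,\overline{X}$ and the free module on the exceptional divisor classes, which Galois permutes; this yields similarity of the two Picard lattices directly. Second, in (iii) the flabby quotient $F$ of the chosen resolution is \emph{similar} to an invertible lattice and hence is itself invertible, but it need not be isomorphic to a prechosen $E$; the retraction of the torsor $T_P\to T$ should be built from a permutation lattice containing $F$ itself as a direct summand. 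Neither point affects the soundness of your overall plan.
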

%%%%%%%%%%%%%

%%%%%%%%%%%%%%%%%%%
Let $G$ be a finite group and $M$ be a $G$-lattice. We define 
\begin{align*}
\Sha^i_\omega(G,M):={\rm Ker}\left\{H^i(G,M)\xrightarrow{{\rm res}}\bigoplus_{g\in G}H^i(\langle g\rangle,M)\right\}\quad (i\geq 1) .
\end{align*}
The following is a theorem of Colliot-Th\'{e}l\`{e}ne and Sansuc \cite{CTS87}: 
\begin{theorem}[{Colliot-Th\'{e}l\`{e}ne and Sansuc \cite[Proposition 9.5 (ii)]{CTS87}, see also \cite[Proposition 9.8]{San81} and \cite[page 98]{Vos98}}]\label{thCTS87}
Let $k$ be a field 
with ${\rm char}\, k=0$
and $K/k$ be a finite Galois extension 
with Galois group $G={\rm Gal}(K/k)$. 
Let $T$ be an algebraic $k$-torus which splits over $K$ and 
$X$ be a smooth $k$-compactification of $T$. 
Then we have 
\begin{align*}
\Sha^2_\omega(G,\widehat{T})\simeq 
H^1(G,{\rm Pic}\, X_K)\simeq {\rm Br}(X)/{\rm Br}(k)
\end{align*}
where 
${\rm Br}(X)$ is the \'etale cohomological Brauer Group of $X$ 
$($it is the same as the Azumaya-Brauer group of $X$ 
for such $X$, see \cite[page 199]{CTS87}$)$. 
\end{theorem}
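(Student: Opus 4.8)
The plan is to establish the two isomorphisms separately: the \emph{geometric} one ${\rm Br}(X)/{\rm Br}(k)\simeq H^1(G,{\rm Pic}\, X_K)$, via the Hochschild--Serre spectral sequence in \'etale cohomology together with the rational point $1\in T(k)\subset X(k)$, and the \emph{arithmetic} one $\Sha^2_\omega(G,\widehat{T})\simeq H^1(G,{\rm Pic}\, X_K)$, via the flabby resolution of $\widehat{T}$ supplied by Theorem~\ref{thVos69} together with the periodicity of Tate cohomology over cyclic groups. Throughout one uses that, since $T$ (hence $X$) splits over $K$, the natural map ${\rm Pic}\, X_K\to{\rm Pic}\,\overline{X}$ is an isomorphism of $G$-lattices.

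\emph{The geometric isomorphism.} Let $\mathcal{G}={\rm Gal}(\overline{k}/k)$ and consider the spectral sequence $E_2^{p,q}=H^p(\mathcal{G},H^q(\overline{X},\bG_m))\Rightarrow H^{p+q}(X,\bG_m)$. Since $X$ is proper and geometrically integral, $\overline{k}[X]^\times=\overline{k}^\times$, so $H^0(\overline{X},\bG_m)=\overline{k}^\times$; since $\overline{X}$ is a smooth compactification of a split torus it is $\overline{k}$-rational, hence ${\rm Br}(\overline{X})=H^2(\overline{X},\bG_m)=0$ by birational invariance of the Brauer group of smooth projective varieties. Feeding in Hilbert~90, $H^1(k,\overline{k}^\times)=0$, the low-degree exact sequence reads
\[
0\to{\rm Pic}\, X\to({\rm Pic}\,\overline{X})^{\mathcal{G}}\to{\rm Br}(k)\to{\rm Br}(X)\to H^1(k,{\rm Pic}\,\overline{X})\to H^3(k,\overline{k}^\times).
\]
The rational point $1\in T(k)\subset X(k)$ gives a section of the structure morphism, hence a splitting of the corresponding map of spectral sequences; consequently ${\rm Br}(k)\to{\rm Br}(X)$ is split injective and the differential $d_2\colon H^1(k,{\rm Pic}\,\overline{X})\to H^3(k,\overline{k}^\times)$ vanishes, so ${\rm Br}(X)/{\rm Br}(k)\simeq H^1(k,{\rm Pic}\,\overline{X})$. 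Finally, ${\rm Pic}\,\overline{X}$ is inflated from the $G$-module ${\rm Pic}\, X_K$ and is torsion-free, so the inflation--restriction sequence gives $H^1(k,{\rm Pic}\,\overline{X})\simeq H^1(G,{\rm Pic}\, X_K)$, the restriction term being continuous homomorphisms from ${\rm Gal}(\overline{k}/K)$ into the torsion-free group ${\rm Pic}\,\overline{X}$, which vanish.

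\emph{The arithmetic isomorphism.} By Theorem~\ref{thVos69} there is a flabby resolution $0\to\widehat{T}\to\widehat{Q}\to{\rm Pic}\, X_K\to 0$ with $\widehat{Q}$ permutation and ${\rm Pic}\, X_K$ flabby. Since $H^1(G,\widehat{Q})=0$ (Shapiro), the long exact sequence yields an injection $\delta\colon H^1(G,{\rm Pic}\, X_K)\hookrightarrow H^2(G,\widehat{T})$, and it remains to identify ${\rm im}\,\delta$ with $\Sha^2_\omega(G,\widehat{T})$. For any cyclic $C\leq G$, flabbiness gives $\widehat{H}^{-1}(C,{\rm Pic}\, X_K)=0$, and by $2$-periodicity of Tate cohomology over a cyclic group, $H^1(C,{\rm Pic}\, X_K)=\widehat{H}^1(C,{\rm Pic}\, X_K)=\widehat{H}^{-1}(C,{\rm Pic}\, X_K)=0$; hence every class in ${\rm im}\,\delta$ restricts to $0$ in each $H^2(\langle g\rangle,\widehat{T})$, so ${\rm im}\,\delta\subseteq\Sha^2_\omega(G,\widehat{T})$. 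Conversely, for $\alpha\in\Sha^2_\omega(G,\widehat{T})$ its image $\overline{\alpha}\in H^2(G,\widehat{Q})$ restricts to $0$ on every cyclic subgroup, i.e. $\overline{\alpha}\in\Sha^2_\omega(G,\widehat{Q})$; but $\Sha^2_\omega(G,P)=0$ for every permutation $G$-lattice $P$ (for $P=\bZ[G/H']$ with $H'\leq G$ one has $H^2(G,\bZ[G/H'])\simeq{\rm Hom}((H')^{\rm ab},\bQ/\bZ)$, and by the Mackey formula vanishing of all restrictions to cyclic subgroups forces the associated character to be trivial). Thus $\overline{\alpha}=0$, so $\alpha$ lifts along $\delta$, which gives ${\rm im}\,\delta=\Sha^2_\omega(G,\widehat{T})$ and therefore $\Sha^2_\omega(G,\widehat{T})\simeq H^1(G,{\rm Pic}\, X_K)$.

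\emph{Main obstacle.} The delicate points are (i) verifying that the restriction maps to cyclic subgroups are genuinely compatible with the flabby resolution, so that flabbiness of ${\rm Pic}\, X_K$ cuts out exactly $\Sha^2_\omega$, and (ii) the vanishing $\Sha^2_\omega(G,P)=0$ for permutation $P$; everything else is formal manipulation of spectral sequences and long exact sequences. The hypothesis ${\rm char}\,k=0$ enters only through the existence of the smooth compactification $X$ (Hironaka) and through ${\rm Br}(\overline{X})=0$, and can be dispensed with via \cite{CTHS05} as noted after Theorem~\ref{thVos69}.
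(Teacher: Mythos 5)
Your argument is correct. Note, however, that the paper does not prove this statement at all: it is quoted as Theorem \ref{thCTS87} with attribution to Colliot-Th\'el\`ene--Sansuc \cite[Proposition 9.5 (ii)]{CTS87} (see also \cite[Proposition 9.8]{San81}), so there is no internal proof to compare against. Your reconstruction follows the same standard route as the cited sources: the Hochschild--Serre spectral sequence together with ${\rm Br}(\overline{X})=0$ and the rational point $1\in T(k)$ gives ${\rm Br}(X)/{\rm Br}(k)\simeq H^1(k,{\rm Pic}\,\overline{X})\simeq H^1(G,{\rm Pic}\,X_K)$, and the flabby resolution $0\to\widehat{T}\to\widehat{Q}\to{\rm Pic}\,X_K\to 0$ of Theorem \ref{thVos69}, combined with Shapiro's lemma, periodicity of Tate cohomology on cyclic subgroups, and the Mackey computation showing $\Sha^2_\omega(G,P)=0$ for permutation $P$, identifies ${\rm im}\,\delta$ with $\Sha^2_\omega(G,\widehat{T})$; both halves are sound, with only the routine (and standard) verification that ${\rm Gal}(\overline{k}/K)$ acts trivially on ${\rm Pic}\,\overline{X}$ left implicit, a fact the paper itself also uses without proof.
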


In other words, for the $G$-lattice $M=\widehat{T}$, 
we have 
$H^1(k,{\rm Pic}\, \overline{X})\simeq H^1(G,{\rm Pic}\, X_K)\simeq 
H^1(G,[M]^{fl})\simeq \Sha^2_\omega(G,M)\simeq {\rm Br}(X)/{\rm Br}(k)$. 
%%%%%
We also see  
%${\rm Br}_{\rm nr}(k(X)/k)\subset {\rm Br}\, X\subset {\rm Br}\, k(X)$ 
%and if $X$ is complete, then 
${\rm Br}_{\rm nr}(k(X)/k)={\rm Br}(X)\subset {\rm Br}(k(X))$ 
(see Saltman \cite[Proposition 10.5]{Sal99}, 
Colliot-Th\'{e}l\`{e}ne \cite[Theorem 5.11]{CTS07}, 
Colliot-Th\'{e}l\`{e}ne and Skorobogatov 
\cite[Proposition 6.2.7]{CTS21}).

%%%%%%%%%%%%%%%%%%%%%%%%
Let $T=R^{(1)}_{K/k}(\bG_m)$ be the norm one torus of $K/k$,
i.e. the kernel of the norm map $R_{K/k}(\bG_m)\rightarrow \bG_m$ where 
$R_{K/k}$ is the Weil restriction (see \cite[page 37, Section 3.12]{Vos98}). 
It is biregularly isomorphic to the norm hypersurface 
$f(x_1,\ldots,x_n)=1$ where 
$f\in k[x_1,\ldots,x_n]$ is the polynomial of total 
degree $n$ defined by the norm map $N_{K/k}:K^\times\to k^\times$ 
and has the 
Chevalley module $\widehat{T}\simeq J_{G/H}$ as its character module 
%and the field $L(J_{G/H})^G$ as its function field 
where $J_{G/H}=(I_{G/H})^\circ={\rm Hom}_\bZ(I_{G/H},\bZ)$ 
is the dual lattice of $I_{G/H}={\rm Ker}\, \varepsilon$ and 
$\varepsilon : \bZ[G/H]\rightarrow \bZ$, 
$\sum_{\overline{g}\in G/H} a_{\overline{g}}\,\overline{g}\mapsto
 \sum_{\overline{g}\in G/H} a_{\overline{g}}$ 
is the augmentation map, 
i.e. the function field $k(T)\simeq L(J_{G/H})^G$ 
(see \cite[Section 4.8]{Vos98}). 

Let $K/k$ be a separable field extension of degree $n$ 
and $L/k$ be the Galois closure of $K/k$. 
Let $G={\rm Gal}(L/k)$ and $H={\rm Gal}(L/K)$ with $[G:H]=n$.
We have the exact sequence 
%\begin{align*}
$0\rightarrow \bZ\rightarrow \bZ[G/H]\rightarrow J_{G/H}\rightarrow 0$ 
%\end{align*}
and rank $J_{G/H}=n-1$. 
Write $J_{G/H}=\oplus_{1\leq i\leq n-1}\bZ u_i$. 
We define the action of $G$ on $L(x_1,\ldots,x_{n-1})$ by 
%\begin{align*}
$\sigma(x_i)=\prod_{j=1}^{n-1} x_j^{a_{i,j}} (1\leq i\leq n-1)$ %\label{acts}
%\end{align*}
for any $\sigma\in G$, when $\sigma (u_i)=\sum_{j=1}^{n-1} a_{i,j} u_j$ 
$(a_{i,j}\in\bZ)$. 
Then the invariant field $L(x_1,\ldots,x_{n-1})^G=L(J_{G/H})^G\simeq k(T)$ (see \cite[Section 1]{EM75}). 

%%%%
The rationality problem for norm one tori is investigated 
by \cite{EM75}, \cite{CTS77}, \cite{Hur84}, \cite{CTS87}, 
\cite{LeB95}, \cite{CK00}, \cite{LL00}, \cite{Flo}, \cite{End11}, 
\cite{HY17}, \cite{HHY20}, \cite{HY21}, \cite{HY24}, \cite{HY2}. 
%%%%%%%%%%%%%%

When $K/k$ is a finite Galois extension, %i.e. $H=1$, 
we have that: 

\begin{theorem}[{Voskresenskii \cite[Theorem 7]{Vos70}, Colliot-Th\'{e}l\`{e}ne and Sansuc \cite[Proposition 1]{CTS77}}]
Let $k$ be a field and 
$K/k$ be a finite Galois extension with Galois group $G={\rm Gal}(K/k)$. 
Let $T=R^{(1)}_{K/k}(\bG_m)$ be the norm one torus of $K/k$ 
and $X$ be a smooth $k$-compactification of $T$. 
Then 
$H^1(H,{\rm Pic}\, X_K)\simeq H^3(H,\bZ)$ for any subgroup $H$ of $G$. 
In particular, 
$H^1(k,{\rm Pic}\, \overline{X})\simeq
H^1(G,{\rm Pic}\, X_K)\simeq H^3(G,\bZ)$ which is isomorphic to 
the Schur multiplier $M(G)$ of $G$.
\end{theorem}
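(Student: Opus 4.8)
The plan is to pass from the geometric group $H^1(H,\mathrm{Pic}\,X_K)$ to a group-cohomology computation via the flabby class, and then to exploit the defining exact sequence of the norm one torus for a dimension shift.

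First I would record that, $K/k$ being Galois, the Galois closure of $K/k$ is $K$ itself, so in the notation of the introduction the subgroup $\mathrm{Gal}(L/K)$ is trivial and $\widehat{T}\simeq J_{G/1}=:J_G$, the $\bZ$-dual of the augmentation ideal $I_G=\ker(\bZ[G]\to\bZ)$. From the exact sequence of $k$-tori $1\to T\to R_{K/k}(\bG_m)\to\bG_m\to 1$ (the last map being the norm) one gets, on character lattices, the short exact sequence of $G$-lattices
\[
0\longrightarrow\bZ\longrightarrow\bZ[G]\longrightarrow J_G\longrightarrow 0,
\]
the first map sending $1$ to the norm element $\sum_{g\in G}g$. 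Now fix any subgroup $H\le G$. By Theorem \ref{thVos69} the sequence $0\to\widehat{T}\to\widehat{Q}\to\mathrm{Pic}\,X_K\to 0$ is a flabby resolution of $\widehat{T}$; restricting it to $H$ (restriction carries permutation lattices to permutation lattices and flabby lattices to flabby lattices) shows $[\widehat{T}|_H]^{fl}=[\mathrm{Pic}\,X_K|_H]$. Hence, by the lattice-theoretic identification $H^1(H,[M]^{fl})\simeq\Sha^2_\omega(H,M)$ recorded after Theorem \ref{thCTS87},
\[
H^1(H,\mathrm{Pic}\,X_K)\simeq\Sha^2_\omega\bigl(H,\widehat{T}|_H\bigr)=\Sha^2_\omega(H,J_G).
\]

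The heart of the argument is the computation of this $\Sha^2_\omega$. For any subgroup $H'\le G$ the $\bZ[H']$-module $\bZ[G]|_{H'}$ is free of rank $[G:H']$, so $H^i(H',\bZ[G])=0$ for all $i\ge 1$; feeding this into the long exact cohomology sequence of the displayed short exact sequence gives connecting isomorphisms $H^i(H',J_G)\xrightarrow{\ \sim\ }H^{i+1}(H',\bZ)$ for all $i\ge 1$, compatible with restriction homomorphisms along inclusions $H''\le H'$. Specializing to $i=2$, $H'=H$ and $H''=\langle g\rangle$ for $g\in H$, and combining with the previous display,
\[
H^1(H,\mathrm{Pic}\,X_K)\simeq\Sha^2_\omega(H,J_G)\simeq\ker\Bigl(H^3(H,\bZ)\xrightarrow{\ \mathrm{res}\ }\bigoplus_{g\in H}H^3(\langle g\rangle,\bZ)\Bigr).
\]
Since cyclic group cohomology is $2$-periodic, $H^3(\langle g\rangle,\bZ)\simeq H^1(\langle g\rangle,\bZ)=\mathrm{Hom}(\langle g\rangle,\bZ)=0$ for every $g$; thus every target of the restriction map vanishes and the kernel is all of $H^3(H,\bZ)$. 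This yields $H^1(H,\mathrm{Pic}\,X_K)\simeq H^3(H,\bZ)$ for every $H\le G$. Taking $H=G$ and using $H^1(k,\mathrm{Pic}\,\overline{X})\simeq H^1(G,\mathrm{Pic}\,X_K)$ together with the standard isomorphisms $H^3(G,\bZ)\simeq H^2(G,\bQ/\bZ)\simeq H_2(G,\bZ)=M(G)$ (from $0\to\bZ\to\bQ\to\bQ/\bZ\to 0$ with $H^i(G,\bQ)=0$ for $i\ge1$, and universal coefficients with $\bQ/\bZ$ injective) completes the proof.

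The step needing the most care is the uniform application, over all subgroups $H$ and over an arbitrary base field $k$, of the identification $H^1(H,\mathrm{Pic}\,X_K)\simeq\Sha^2_\omega(H,\widehat{T}|_H)$: one must check that the restriction to $H$ of a flabby resolution of $\widehat{T}$ is again a flabby resolution, and that the isomorphism $H^1(H,[M]^{fl})\simeq\Sha^2_\omega(H,M)$ — which, unlike the Brauer-group statement of Theorem \ref{thCTS87}, is purely lattice-theoretic — is valid with no restriction on $\mathrm{char}\,k$. Once this bookkeeping is settled, the rest is the routine diagram chase above, the one genuinely decisive input being the vanishing $H^3(\langle g\rangle,\bZ)=0$ for cyclic groups, which removes all local conditions and makes $\Sha^2_\omega(H,J_G)$ equal to the full $H^3(H,\bZ)$.
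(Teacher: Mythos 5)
The paper does not actually prove this statement -- it is quoted from Voskresenskii and Colliot-Th\'el\`ene--Sansuc -- so there is no internal proof to compare against; your argument is a correct reconstruction of the standard one, and it uses exactly the two ingredients the paper itself records in Section \ref{S2}: the lattice-theoretic identification $H^1(H,[M]^{fl})\simeq\Sha^2_\omega(H,M)$ coming from a flabby resolution, and the dimension shift $H^i(H,J_G)\simeq H^{i+1}(H,\bZ)$ $(i\geq 1)$ obtained from $0\to\bZ\to\bZ[G]\to J_G\to 0$ together with the freeness of $\bZ[G]|_H$. Your bookkeeping points are handled correctly: restriction of a permutation (resp.\ flabby) lattice to a subgroup is again permutation (resp.\ flabby), the isomorphism $H^1(H,[M]^{fl})\simeq\Sha^2_\omega(H,M)$ is purely about $G$-lattices and needs no hypothesis on ${\rm char}\,k$ once Theorem \ref{thVos69} is invoked in its any-field form, and the decisive input $H^3(\langle g\rangle,\bZ)\simeq H^1(\langle g\rangle,\bZ)=0$ is what collapses $\Sha^2_\omega(H,J_G)$ to all of $H^3(H,\bZ)$. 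The concluding identification $H^3(G,\bZ)\simeq H^2(G,\bQ/\bZ)\simeq M(G)$ is also the one the paper uses.
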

In other words, for the $G$-lattice $J_G\simeq \widehat{T}$, 
$H^1(H,[J_G]^{fl})\simeq H^3(H,\bZ)$ for any subgroup $H$ of $G$ 
and $H^1(G,[J_G]^{fl})\simeq H^3(G,\bZ)\simeq H^2(G,\bQ/\bZ)$; 
the Schur multiplier of $G$. 
By the exact sequence $0\to\bZ\to\bZ[G]\to J_G\to 0$, 
we also have $\delta:H^1(G,J_G)\simeq H^2(G,\bZ)\simeq G^{ab}\simeq 
G/[G,G]$ where $\delta$ is the connecting homomorphism and 
$G^{ab}$ is the abelianization of $G$. 

%%%%%%%%%%%%%%%%%%%%%%%%%%%%%%

It is easy to see that all the $1$-dimensional algebraic $k$-tori $T$, 
i.e. the trivial torus $\bG_m$ and the norm one torus 
$R^{(1)}_{K/k}(\bG_m)$ of $K/k$ with $[K:k]=2$, are $k$-rational. 
Voskresenskii \cite{Vos67} showed that 
all the $2$-dimensional algebraic $k$-tori $T$ are $k$-rational. 
Kunyavskii \cite{Kun90} gave a 
rational (stably rational, retract rational) classification of 
$3$-dimensional algebraic $k$-tori. 
Hoshi and Yamasaki \cite[Theorem 1.9 and Theorem 1.12]{HY17} classified stably/retract rational 
algebraic $k$-tori of dimension $4$ and $5$ (see also \cite[Section 1]{HKY22}).

A necessary and sufficient condition for the classification 
of stably/retract rational norm one tori $T=R^{(1)}_{K/k}(\bG_m)$ 
is also known in the following cases:\\
{\rm (i)} $[K:k]=n\leq 15$;\\
{\rm (ii)} $n=2^d$;\\
{\rm (iii)} $n=p$ a prime %number 
except for the case when $G={\rm PSL}_2(\bF_{2^e})\leq S_p$  
with Fermat primes $p=2^e+1\geq 17$;\\
{\rm (iv)} $n=2p$ with $G\leq S_{2p}$ primitive. 

Case (i) was proven in Hoshi and Yamasaki \cite{HY21} for $n\leq 10$ 
except for the stable rationality of 
$G=9T27\simeq \PSL_2(\bF_8)$ and of 
$G=10T11\simeq A_5\times C_2$ which were established by 
Hoshi and Yamasaki \cite{HY2} and Hasegawa, Hoshi and Yamasaki \cite{HHY20} respectively. 
Cases (ii) and (iii) were proven in Hoshi and Yamasaki \cite{HY21}. 
Moreover, in \cite{HHY20}, case (i) for $n=12, 14, 15$ and case (iv) were proven. 

%%%%%%%%%%%%%%%%%%%%%%%%%%%%%%%%%%%%%%%%%%%%%
\section{Hasse norm principle and norm one tori}\label{S3}
%%%%%%%%%%%%%%%%%%%%%%%%%%%%%%%%%%%

%%%%%%%%%%%%%%%%%%%%%%%%%%%%%%%%%%%%%%%%%%%%%%%%%%%%%%%%%%%%5
%Let $k$ be a number field, 
Let $k$ be a global field, 
i.e. a number field (a finite extension of $\bQ$) 
or a function field of an algebraic curve over 
$\bF_q$ (a finite extension of $\bF_q(t))$.

\begin{definition}
Let $T$ be an algebraic $k$-torus 
and $T(k)$ be the group of $k$-rational points of $T$. 
Then $T(k)$ 
embeds into $\prod_{v\in V_k} T(k_v)$ by the diagonal map 
where %$T(k_v)$ is the group of $k_v$-rational points and 
$V_k$ is the set of all places of $k$ and 
$k_v$ is the completion of $k$ at $v$. 
Let $\overline{T(k)}$ be the closure of $T(k)$  
in the product $\prod_{v\in V_k} T(k_v)$. 
The group 
\begin{align*}
A(T)=\left(\prod_{v\in V_k} T(k_v)\right)/\overline{T(k)}
\end{align*}
is called {\it the kernel of the weak approximation} of $T$. 
We say that {\it $T$ has the weak approximation property} if $A(T)=0$. 
\end{definition}

\begin{definition}
Let $E$ be a principal homogeneous space (= torsor) under $T$.  
{\it Hasse principle holds for $E$} means that 
if $E$ has a $k_v$-rational point for all $k_v$, 
then $E$ has a $k$-rational point. 
The set $H^1(k,T)$ classifies all such torsors $E$ up 
to (non-unique) isomorphism. 
We define {\it the Shafarevich-Tate group} of $T$: 
\begin{align*}
\Sha(T)={\rm Ker}\left\{H^1(k,T)\xrightarrow{\rm res} \bigoplus_{v\in V_k} 
H^1(k_v,T)\right\}.
\end{align*}
Then 
Hasse principle holds for all torsors $E$ under $T$ 
if and only if $\Sha(T)=0$. 
\end{definition}

%%%%%%%%%%%%%%%%%%%%%%%%%%%%%%%%%%%%%%%%%%%
%
\begin{theorem}[{Voskresenskii \cite[Theorem 5, page 1213]{Vos69}, 
\cite[Theorem 6, page 9]{Vos70}, see also \cite[Section 11.6, Theorem, page 120]{Vos98}}]\label{thV}
Let $k$ be a global field, 
$T$ be an algebraic $k$-torus and $X$ be a smooth $k$-compactification of $T$. %over $k$. 
Then there exists an exact sequence
\begin{align*}
0\to A(T)\to H^1(k,{\rm Pic}\,\overline{X})^{\vee}\to \Sha(T)\to 0
\end{align*}
where $M^{\vee}={\rm Hom}(M,\bQ/\bZ)$ is the Pontryagin dual of $M$. 
In particular, if $T$ is retract $k$-rational, then $ H^1(k,{\rm Pic}\,\overline{X})=0$ and hence 
$A(T)=0$ and $\Sha(T)=0$. 
Moreover, if $L$ is the splitting field of $T$ and $L/k$ 
is an unramified extension, then $A(T)=0$ and 
$H^1(k,{\rm Pic}\,\overline{X})^{\vee}\simeq \Sha(T)$. 
\end{theorem}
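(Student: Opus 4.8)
\emph{Proposal.} The plan is to run the standard flasque-resolution argument (Voskresenskii, Colliot-Th\'{e}l\`{e}ne--Sansuc): realize the flabby class geometrically, dualize it to a short exact sequence of tori whose kernel is quasi-trivial, and then feed the outcome into local and global duality for tori. By Theorem \ref{thVos69} the sequence $0\to\widehat T\to\widehat Q\to{\rm Pic}\,\overline X\to 0$ is a flabby resolution of $\widehat T$ with $\widehat Q$ permutation, so ${\rm Pic}\,\overline X$ represents $[\widehat T]^{fl}$. Applying the duality between $G$-lattices split by $L$ and $k$-tori split by $L$ (Section \ref{S2}), I obtain a short exact sequence of $k$-tori
\[
1\longrightarrow S\longrightarrow Q\longrightarrow T\longrightarrow 1,
\]
where $Q$ is quasi-trivial (a product of Weil restrictions $R_{L^{H_i}/k}(\bG_m)$) and $S$ is the flasque torus with $\widehat S={\rm Pic}\,\overline X$. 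I will use three standard facts about the quasi-trivial $Q$: $H^1(K,Q)=0$ for every field $K\supseteq k$ (Hilbert~90 and Shapiro's lemma); $\Sha^2(Q)=0$ (vanishing of $\Sha^2(\bG_m)$ over number fields, Albert--Brauer--Hasse--Noether); and $Q$ satisfies weak approximation.

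\emph{The $A(T)$ term.} From the long exact cohomology sequences of the displayed sequence over $k$ and over each $k_v$, together with $H^1(Q)=0$, I get compatible isomorphisms $T(k)/{\rm im}\,Q(k)\simeq H^1(k,S)$ and $T(k_v)/{\rm im}\,Q(k_v)\simeq H^1(k_v,S)$, the latter via the connecting map. Each ${\rm im}(Q(k_v)\to T(k_v))$ is open (its cokernel is finite and discrete), and $Q$ has weak approximation, so the closure of $T(k)$ in $\prod_v T(k_v)$ contains $\prod_v{\rm im}(Q(k_v)\to T(k_v))$; dividing out gives $A(T)\simeq{\rm coker}\big(H^1(k,S)\to\prod_v H^1(k_v,S)\big)$. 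The key point making this clean is that $H^1(k_v,S)=0$ whenever $v$ is unramified in $L$: there the decomposition group $D_w$ is cyclic, so by local Tate duality, inflation, and the $2$-periodicity of the cohomology of a cyclic group, $H^1(k_v,S)\simeq H^1(k_v,\widehat S)^{\vee}\simeq\widehat H^{-1}(D_w,{\rm Pic}\,\overline X)^{\vee}=0$ because ${\rm Pic}\,\overline X$ is flabby. Hence only the finitely many ramified and archimedean places contribute, $\prod_v H^1(k_v,S)$ is finite, and no closure correction is needed.

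\emph{The $\Sha(T)$ term and assembly.} The same long exact sequence yields $\Sha(T)=\Sha^1(T)\simeq\ker\big(\Sha^2(S)\to\Sha^2(Q)\big)=\Sha^2(S)$, using $\Sha^2(Q)=0$. I then invoke the Poitou--Tate exact sequence for the torus $S$ over $k$ in the range
\[
H^1(k,S)\longrightarrow\prod_v H^1(k_v,S)\longrightarrow H^1(k,\widehat S)^{\vee}\longrightarrow H^2(k,S)\longrightarrow\prod_v H^2(k_v,S),
\]
where the restricted products may be replaced by full products by the vanishing just observed. Exactness gives a short exact sequence $0\to{\rm coker}\big(H^1(k,S)\to\prod_v H^1(k_v,S)\big)\to H^1(k,\widehat S)^{\vee}\to\Sha^2(S)\to 0$, and substituting $\widehat S={\rm Pic}\,\overline X$, ${\rm coker}(\cdots)=A(T)$, $\Sha^2(S)=\Sha(T)$ produces exactly $0\to A(T)\to H^1(k,{\rm Pic}\,\overline X)^{\vee}\to\Sha(T)\to 0$. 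The two special cases then follow at once: if $T$ is retract $k$-rational, then $[\widehat T]^{fl}$ is invertible by Theorem \ref{th2-1}(iii), so ${\rm Pic}\,\overline X$ is similar to an invertible (hence coflabby) lattice and $H^1(k,{\rm Pic}\,\overline X)\simeq H^1(G,{\rm Pic}\,X_L)=0$, forcing $A(T)=\Sha(T)=0$; and if $L/k$ is unramified, then every place of $k$ is unramified in $L$, so $\prod_v H^1(k_v,S)=0$ by the cyclic-decomposition argument, whence $A(T)=0$ and $H^1(k,{\rm Pic}\,\overline X)^{\vee}\simeq\Sha(T)$.

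\emph{Main obstacle.} No single step is hard in isolation; the real work is assembling the duality inputs (local Tate duality, the Poitou--Tate sequence for tori, and the quasi-trivial vanishing theorems) coherently, and verifying that the restricted topological products in Poitou--Tate genuinely coincide with the naive products $\prod_v H^1(k_v,S)$ entering the definition of $A(T)$, so that no completion or density correction survives. This matching is exactly what the flabbiness of ${\rm Pic}\,\overline X$ delivers, through the vanishing of $H^1(k_v,S)$ at all unramified places; keeping careful track of that is the delicate part of the argument.
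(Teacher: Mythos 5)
The paper does not prove this statement: it is imported verbatim from Voskresenskii (with the references \cite{Vos69}, \cite{Vos70}, \cite{Vos98}), so there is no in-paper proof to compare against. Your proposal is a correct reconstruction of the standard modern argument (essentially Colliot-Th\'el\`ene--Sansuc and Sansuc's refinement of Voskresenskii): dualize the flabby resolution of $\widehat{T}$ to $1\to S\to Q\to T\to 1$ with $Q$ quasi-trivial and $\widehat{S}={\rm Pic}\,\overline{X}$, identify $A(T)$ with ${\rm coker}\bigl(H^1(k,S)\to\prod_v H^1(k_v,S)\bigr)$ and $\Sha(T)$ with $\Sha^2(S)$, and splice these into the Poitou--Tate sequence for $S$. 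The key verifications are all in place: the vanishing $H^1(k_v,S)=0$ at places with cyclic decomposition group (via local duality, periodicity, and flabbiness of ${\rm Pic}\,\overline{X}$) legitimately collapses the restricted product to a finite direct sum and simultaneously yields both ``moreover'' clauses, and the retract-rational case follows from invertibility of $[\widehat{T}]^{fl}$ exactly as you say. I see no gap.
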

For the last assertion, see \cite[Section 11.5]{Vos98}. 
It follows that 
$H^1(k,{\rm Pic}\,\overline{X})=0$ if and only if $A(T)=0$ and $\Sha(T)=0$, 
i.e. $T$ has the weak approximation property and 
Hasse principle holds for all torsors $E$ under $T$. 
Theorem \ref{thV} was generalized 
to the case of linear algebraic groups by Sansuc \cite{San81}.

%%%%%%%%%%%%%%%%%%%%%%%%%%%%%%%%%%%%%%%%%%%%%%%%%%%%%%%%%%%%%%%%%
\begin{definition}
Let $k$ be a 
global field, 
$K/k$ be a finite extension and 
$\bA_K^\times$ be the idele group of $K$. 
We say that {\it the Hasse norm principle holds for $K/k$} 
if $(N_{K/k}(\bA_K^\times)\cap k^\times)/N_{K/k}(K^\times)=1$ 
where $N_{K/k}$ is the norm map. 
\end{definition}

Hasse \cite[Satz, page 64]{Has31} proved that 
the Hasse norm principle holds for any cyclic extension $K/k$ 
but does not hold for bicyclic extension $\bQ(\sqrt{-39},\sqrt{-3})/\bQ$. 
For Galois extensions $K/k$, Tate \cite{Tat67} gave the following theorem:
%

%Let $K/k$ be a field extension of degree $n$, 
%$L/k$ be the Galois closure of $K/k$ with 
%$G={\rm Gal}(L/k)=nTm\leq S_n$. 

\begin{theorem}[{Tate \cite[page 198]{Tat67}}]\label{thTate}
Let $k$ be a global field, $K/k$ be a finite Galois extension 
with Galois group ${\rm Gal}(K/k)\simeq G$. 
Let $V_k$ be the set of all places of $k$ 
and $G_v$ be the decomposition group of $G$ at $v\in V_k$. 
Then 
\begin{align*}
(N_{K/k}(\bA_K^\times)\cap k^\times)/N_{K/k}(K^\times)\simeq 
{\rm Coker}\left\{\bigoplus_{v\in V_k}\widehat H^{-3}(G_v,\bZ)\xrightarrow{\rm cores}\widehat H^{-3}(G,\bZ)\right\}
\end{align*}
where $\widehat H$ is the Tate cohomology. 
In particular, the Hasse norm principle holds for $K/k$ 
if and only if the restriction map 
$H^3(G,\bZ)\xrightarrow{\rm res}\bigoplus_{v\in V_k}H^3(G_v,\bZ)$ 
is injective. 
\end{theorem}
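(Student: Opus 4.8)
The plan is to deduce Tate's theorem from global and local class field theory via Tate--Nakayama duality, following Tate's original argument. Write $C_K=\bA_K^\times/K^\times$ for the id\`ele class group and start from the short exact sequence of $G$-modules $1\to K^\times\to\bA_K^\times\to C_K\to 1$. Its long exact sequence in Tate cohomology contains the segment $\widehat H^{-1}(G,\bA_K^\times)\to\widehat H^{-1}(G,C_K)\xrightarrow{\ \delta\ }\widehat H^0(G,K^\times)\to\widehat H^0(G,\bA_K^\times)$. Since $\widehat H^0(G,K^\times)=k^\times/N_{K/k}(K^\times)$ and $\widehat H^0(G,\bA_K^\times)=\bA_k^\times/N_{K/k}(\bA_K^\times)$ with the map between them induced by the diagonal embedding, the kernel of the last arrow is precisely $(N_{K/k}(\bA_K^\times)\cap k^\times)/N_{K/k}(K^\times)$ (an element of $k^\times$ lies in $N_{K/k}(\bA_K^\times)$ iff it is a local norm at every place). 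By exactness this kernel equals $\operatorname{Coker}\{\widehat H^{-1}(G,\bA_K^\times)\to\widehat H^{-1}(G,C_K)\}$, so it remains to identify the two groups and the map.

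Next I would compute each side. On the id\`ele side, writing $\bA_K^\times$ as a filtered colimit of $G$-submodules that are finite products of the induced modules $\prod_{w\mid v}K_w^\times=\operatorname{Ind}_{G_v}^G K_w^\times$ (times unit factors), Shapiro's lemma together with the vanishing $\widehat H^{-1}(G_v,\mathcal O_w^\times)=0$ at unramified $v$ gives $\widehat H^{-1}(G,\bA_K^\times)\simeq\bigoplus_{v\in V_k}\widehat H^{-1}(G_v,K_w^\times)$, a finite direct sum. Local class field theory supplies the local fundamental class $u_v\in H^2(G_v,K_w^\times)$, and cup product with $u_v$ gives the Tate--Nakayama isomorphisms $\widehat H^n(G_v,K_w^\times)\xrightarrow{\ \sim\ }\widehat H^{n-2}(G_v,\bZ)$; in particular $\widehat H^{-1}(G_v,K_w^\times)\simeq\widehat H^{-3}(G_v,\bZ)$. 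On the class side, global class field theory supplies the global fundamental class $u\in H^2(G,C_K)$ and the Tate--Nakayama isomorphisms $\widehat H^n(G,C_K)\xrightarrow{\ \sim\ }\widehat H^{n-2}(G,\bZ)$, so $\widehat H^{-1}(G,C_K)\simeq\widehat H^{-3}(G,\bZ)$.

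The crux, and the step I expect to be the main obstacle, is showing that under these identifications the natural map $\widehat H^{-1}(G_v,K_w^\times)\to\widehat H^{-1}(G,C_K)$ induced by $K_w^\times\to C_K$ becomes the corestriction $\widehat H^{-3}(G_v,\bZ)\xrightarrow{\operatorname{cores}}\widehat H^{-3}(G,\bZ)$. This is exactly the compatibility between local and global class field theory: the restriction of the global fundamental class $u$ along $G_v\hookrightarrow G$ agrees with the image of the local fundamental class $u_v$ under $K_w^\times\to C_K$. Combined with the projection formula $\operatorname{cores}(x)\cup y=\operatorname{cores}\bigl(x\cup\operatorname{res}(y)\bigr)$ for cup products, this intertwines the cup-product isomorphisms with corestriction, and feeding it back into the exact sequence of the first paragraph yields $(N_{K/k}(\bA_K^\times)\cap k^\times)/N_{K/k}(K^\times)\simeq\operatorname{Coker}\bigl\{\bigoplus_{v\in V_k}\widehat H^{-3}(G_v,\bZ)\xrightarrow{\operatorname{cores}}\widehat H^{-3}(G,\bZ)\bigr\}$.

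Finally, for the ``in particular'' clause I would use the perfect Tate pairing $\widehat H^{r}(G,\bZ)\times\widehat H^{-1-r}(G,\bQ/\bZ)\to\bQ/\bZ$ for finite $G$, which together with the long exact sequence of $0\to\bZ\to\bQ\to\bQ/\bZ\to 0$ gives a canonical isomorphism $\widehat H^{-3}(G,\bZ)\simeq\widehat H^{2}(G,\bQ/\bZ)^{\vee}\simeq H^3(G,\bZ)^{\vee}$, and likewise for every $G_v$. Under these isomorphisms corestriction on $\widehat H^{-3}(-,\bZ)$ is the Pontryagin dual of restriction on $H^3(-,\bZ)$ (the pairing being adjoint for $\operatorname{res}$ and $\operatorname{cores}$). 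Since the groups involved are finite, the cokernel above vanishes if and only if its dual kernel vanishes, i.e. if and only if the restriction map $H^3(G,\bZ)\to\bigoplus_{v\in V_k}H^3(G_v,\bZ)$ is injective, which is the asserted criterion for the Hasse norm principle.
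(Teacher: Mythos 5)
The paper does not prove this statement: it is quoted verbatim as a known theorem with a citation to Tate's article in the Cassels--Fr\"ohlich volume, so there is no in-paper argument to compare against. Your proposal is a correct reconstruction of Tate's original proof: the identification of the knot group with ${\rm Coker}\{\widehat H^{-1}(G,\bA_K^\times)\to\widehat H^{-1}(G,C_K)\}$ via the long exact sequence of $1\to K^\times\to\bA_K^\times\to C_K\to 1$, the Shapiro-plus-unramified-vanishing decomposition of $\widehat H^{-1}(G,\bA_K^\times)$, the local and global Tate--Nakayama isomorphisms shifting to $\widehat H^{-3}(\,\cdot\,,\bZ)$, the compatibility ${\rm res}^G_{G_v}(u)=u_v$ together with the projection formula to turn the natural map into corestriction, and the duality $\widehat H^{-3}(G,\bZ)\simeq H^3(G,\bZ)^{\vee}$ interchanging the cokernel of cores with the kernel of res are all correctly placed, and you have rightly singled out the local--global compatibility of fundamental classes as the one genuinely nontrivial input.
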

If $G\simeq C_n$ is cyclic, then 
$\widehat H^{-3}(G,\bZ)\simeq H^3(G,\bZ)\simeq H^1(G,\bZ)=0$ 
and hence Hasse's original theorem follows. 
If there exists a place $v$ of $k$ such that $G_v=G$, then 
the Hasse norm principle also holds for $K/k$. 
For example, the Hasse norm principle holds for $K/k$ with 
$G\simeq V_4$ if and only if 
there exists a place $v$ of $k$ such that $G_v=V_4$ because 
$H^3(V_4,\bZ)\simeq\bZ/2\bZ$ and $H^3(C_2,\bZ)=0$. 
The Hasse norm principle holds for $K/k$ with 
$G\simeq (C_2)^3$ if and only if {\rm (i)} 
there exists a place $v$ of $k$ such that $G_v=G$ 
or {\rm (ii)} 
there exist places $v_1,v_2,v_3$ of $k$ such that 
$G_{v_i}\simeq V_4$ and 
$H^3(G,\bZ)\xrightarrow{\rm res}
H^3(G_{v_1},\bZ)\oplus H^3(G_{v_2},\bZ)\oplus H^3(G_{v_3},\bZ)$ 
is an isomorphism because 
$H^3(G,\bZ)\simeq(\bZ/2\bZ)^{\oplus 3}$ and 
$H^3(V_4,\bZ)\simeq \bZ/2\bZ$. 

Ono \cite{Ono63} established the relationship 
between the Hasse norm principle for $K/k$ 
and Hasse principle for all torsors $E$ under
the norm one torus $R^{(1)}_{K/k}(\bG_m)$: 
\begin{theorem}[{Ono \cite[page 70]{Ono63}, see also Platonov \cite[page 44]{Pla82}, Kunyavskii \cite[Remark 3]{Kun84}, Platonov and Rapinchuk \cite[page 307]{PR94}}]\label{thOno}
Let $k$ be a global field and $K/k$ be a finite extension. 
Then 
\begin{align*}
\Sha(R^{(1)}_{K/k}(\bG_m))\simeq (N_{K/k}(\bA_K^\times)\cap k^\times)/N_{K/k}(K^\times).
\end{align*}
In particular, $\Sha(R^{(1)}_{K/k}(\bG_m))=0$ if and only if 
the Hasse norm principle holds for $K/k$. 
\end{theorem}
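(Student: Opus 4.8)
The plan is to obtain Ono's theorem by applying Galois cohomology to the defining exact sequence of the norm one torus and then comparing the resulting local and global norm obstructions. Write $T=R^{(1)}_{K/k}(\bG_m)$ and $n=[K:k]$. First I would record the short exact sequence of algebraic $k$-tori
\[
1\to T\to R_{K/k}(\bG_m)\xrightarrow{N}\bG_m\to 1,
\]
which holds by the very definition of $T$ once one checks that the norm map is surjective as a morphism of algebraic groups; this is clear on $\overline{k}$-points, where $K\otimes_k\overline{k}\simeq\overline{k}^{\,n}$ and $N$ becomes $(x_1,\dots,x_n)\mapsto\prod_i x_i$ on $(\overline{k}^{\,n})^\times$. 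Taking the long exact sequence in $\mathrm{Gal}(\overline{k}/k)$-cohomology, and using that $R_{K/k}(\bG_m)(\overline{k})$ is the induced module so that Shapiro's lemma gives $H^1(k,R_{K/k}(\bG_m))\simeq H^1(K,\bG_m)=0$ by Hilbert's Theorem 90, one reads off a canonical isomorphism $H^1(k,T)\simeq k^\times/N_{K/k}(K^\times)$.

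Running the identical argument over each completion $k_v$, with $K\otimes_k k_v\simeq\prod_{w\mid v}K_w$ and hence $R_{K/k}(\bG_m)\times_k k_v\simeq\prod_{w\mid v}R_{K_w/k_v}(\bG_m)$, gives $H^1(k_v,T)\simeq k_v^\times\big/\prod_{w\mid v}N_{K_w/k_v}(K_w^\times)$, and the localization maps $H^1(k,T)\to H^1(k_v,T)$ are the evident maps induced by $k^\times\hookrightarrow k_v^\times$. Feeding this into the definition of $\Sha$ yields
\[
\Sha(T)\;\simeq\;\Big\{a\in k^\times:\ a\in\textstyle\prod_{w\mid v}N_{K_w/k_v}(K_w^\times)\ \text{for all places }v\Big\}\Big/N_{K/k}(K^\times).
\]
It then remains to identify the numerator with $N_{K/k}(\bA_K^\times)\cap k^\times$. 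The inclusion $\subseteq$ follows because the idelic norm of $(x_w)\in\bA_K^\times$ has $v$-component $\prod_{w\mid v}N_{K_w/k_v}(x_w)$; conversely, given $a\in k^\times$ which is a local norm at every $v$, one selects local preimages at the finitely many ``bad'' places and, at every remaining place (where $a$ is a $v$-adic unit and each $K_w/k_v$ is unramified, so $a$ is the norm of a unit), a preimage lying in $\mathcal{O}_{K_w}^\times$, thereby assembling an idele of $\bA_K^\times$ whose norm is $a$. This proves the asserted isomorphism $\Sha(R^{(1)}_{K/k}(\bG_m))\simeq(N_{K/k}(\bA_K^\times)\cap k^\times)/N_{K/k}(K^\times)$, and the final assertion is immediate since the Hasse norm principle for $K/k$ is by definition the vanishing of the right-hand side.

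The cohomological steps (base change of Weil restriction, Shapiro's lemma, Hilbert 90, the long exact cohomology sequence) are entirely routine. The only place that calls for genuine care is the last reduction, namely that ``$a$ is a local norm at every place'' already implies ``$a$ lies in the image of the idelic norm map'': here one must use that all but finitely many places are unramified in $K$ and that a fixed $a\in k^\times$ is a unit at all but finitely many places, so that the chosen local preimages automatically form a restricted-product element. I expect this idelic-assembly step to be the main (mild) obstacle; everything else is formal.
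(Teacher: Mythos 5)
Your argument is correct and complete: it is the standard proof of Ono's theorem, obtained from the long exact cohomology sequence of $1\to T\to R_{K/k}(\bG_m)\xrightarrow{N}\bG_m\to 1$ together with Shapiro's lemma, Hilbert 90, and the idelic assembly of local norms (using that at almost all places $a$ is a unit and $K_w/k_v$ is unramified, so the norm on units is surjective there). The paper does not prove this statement but only cites it (Ono, Platonov, Kunyavskii, Platonov--Rapinchuk), and your argument coincides with the classical one given in those references, so there is no substantive difference to report.
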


%%%%%%%%%%%%%%%%%%%%%%%%%%%%%%%%%
For norm one tori $T=R^{(1)}_{K/k}(\bG_m)$, 
recall that 
the function field $k(T)$ may be regarded as $L(M)^G$ 
for the character module $M=J_{G/H}$ and hence we have: 
\begin{align*}
[J_{G/H}]^{fl}=0\,
\ \ \Rightarrow\ \ [J_{G/H}]^{fl}\ \textrm{is\ invertible}
\ \ \Rightarrow\ \  H^1(G,[J_{G/H}]^{fl})=0\,
\ \ \Rightarrow\ \  A(T)=0\ \textrm{and}\ \Sha(T)=0
\end{align*}
where the last implication holds over a global field $k$ 
(see Theorem \ref{thV}, 
see also Colliot-Th\'{e}l\`{e}ne and Sansuc \cite[page 29]{CTS77}). 
The last conditions mean that %are equivalent to the condition that 
$T$ has the weak approximation property and 
the Hasse norm principle holds for $K/k$ as above.  
In particular, it follows that 
$[J_{G/H}]^{fl}$ is invertible, i.e. $T$ is retract $k$-rational, 
and hence $A(T)=0$ and $\Sha(T)=0$ when $G=pTm\leq S_p$ is a transitive 
subgroup of $S_p$ of prime degree $p$ 
and $H=G\cap S_{p-1}\leq G$ with $[G:H]=p$ (see 
Colliot-Th\'{e}l\`{e}ne and Sansuc \cite[Proposition 9.1]{CTS87} and 
\cite[Lemma 2.17]{HY17}). 
Hence the Hasse norm principle holds for $K/k$ when $[K:k]=p$. 
%(see the second paragraph after Theorem \ref{thTate} 
%and also the first paragraph of Section \ref{S5}).  
%%%%%%%%%%%%%%%%%%%%%%%%%%%%%%%%%

%\begin{remark}
Applying Theorem \ref{thV} to $T=R^{(1)}_{K/k}(\bG_m)$,  
it follows from Theorem \ref{thOno} that 
$H^1(k,{\rm Pic}\,\overline{X})=0$ if and only if 
$A(T)=0$ and $\Sha(T)=0$, 
i.e. 
$T$ has the weak approximation property and 
the Hasse norm principle holds for $K/k$. 
In the algebraic language, 
the latter condition $\Sha(T)=0$ means that 
for the corresponding norm hypersurface $f(x_1,\ldots,x_n)=b$, 
it has a $k$-rational point 
if and only if it has a $k_v$-rational point 
for any place $v$ of $k$ where 
$f\in k[x_1,\ldots,x_n]$ is the polynomial of total 
degree $n$ defined by the norm map $N_{K/k}:K^\times\to k^\times$ 
and $b\in k^\times$ 
(see \cite[Example 4, page 122]{Vos98}).
%\end{remark}

By Poitou-Tate duality (see Milne \cite[Theorem 4.20]{Mil86}, 
Platonov and Rapinchuk \cite[Theorem 6.10]{PR94}, 
Neukirch, Schmidt and Wingberg \cite[Theorem 8.6.8]{NSW00}, 
Harari \cite[Theorem 17.13]{Har20}), 
we also have 
\begin{align*}
\Sha(T)^\vee\simeq\Sha^2(G,\widehat{T})
\end{align*}
where  
\begin{align*}
\Sha^i(G,\widehat{T})={\rm Ker}\left\{H^i(G,\widehat{T})\xrightarrow{\rm res} \bigoplus_{v\in V_k} 
H^i(G_v,\widehat{T})\right\}\quad (i\geq 1)
\end{align*}
is {\it the $i$-th Shafarevich-Tate group} 
of $\widehat{T}={\rm Hom}(T,\bG_m)$, 
$G={\rm Gal}(L/k)$ and $L$ is the minimal splitting field of $T$. 
Note that $\Sha(T)\simeq \Sha^1(G,T)\simeq \Sha^2(G,\widehat{T})^\vee$. 
In the special case where 
$T=R^{(1)}_{K/k}(\bG_m)$ and $K/k$ is Galois with $G={\rm Gal}(K/k)$, 
we have $\widehat{T}\simeq J_{G}$ and 
$H^2(G,J_{G})\simeq H^3(G,\bZ)$ and hence 
we get Tate's theorem (Theorem \ref{thTate}). 

The Hasse norm principle for Galois extensions $K/k$ 
was investigated by Gerth \cite{Ger77}, \cite{Ger78} and 
Gurak \cite{Gur78a}, \cite{Gur78b}, \cite{Gur80} 
(see also \cite[pages 308--309]{PR94}). 
Gurak \cite{Gur78a} showed that 
the Hasse norm principle holds for a Galois extension $K/k$ 
if all the Sylow subgroups of ${\rm Gal}(K/k)$ are cyclic. 
Note that this also follows from Theorem \ref{thOno} 
and the retract $k$-rationality of 
$T=R^{(1)}_{K/k}(\bG_m)$ due to Endo and Miyata \cite[Theorem 2.3]{EM75}. 

For non-Galois extension $K/k$ of degree $n$, 
the Hasse norm principle was investigated by 
Bartels \cite{Bar81a} (holds for $n=p$; prime), 
\cite{Bar81b} (holds for $G\simeq D_n$), 
Voskresenskii and Kunyavskii \cite{VK84} (holds for $G\simeq S_n$), 
Kunyavskii \cite{Kun84} $(n=4)$, 
Drakokhrust and Platonov \cite{DP87} $(n=6)$, 
Macedo \cite{Mac20} (holds for $G\simeq A_n$ ($n\neq 4)$), 
Macedo and Newton \cite{MN22} 
($G\simeq A_4$, $S_4$, $A_5$, $S_5$, $A_6$, $A_7$ $($general $n))$, 
Hoshi, Kanai and Yamasaki \cite{HKY22} $(n\leq 15$ $(n\neq 12))$ 
(holds for $G\simeq M_n$ ($n=11,12,22,23,24$; $5$ Mathieu groups)), 
\cite{HKY23} $(n=12)$, 
\cite{HKY} $(G\simeq M_{11}$, $J_1$ $($general $n))$, 
Hoshi and Yamasaki \cite{HY2} 
(holds for $G\simeq {\rm PSL}_2(\bF_7)$ $(n=21)$, 
${\rm PSL}_2(\bF_8)$ $(n=63)$) 
where $G={\rm Gal}(L/k)$ and $L/k$ is the Galois closure of $K/k$. 
Recall that the %prime degree 
case where $n=p$ 
also follows from Theorem \ref{thOno} and 
the retract $k$-rationality of 
$T=R^{(1)}_{K/k}(\bG_m)$ due to 
Colliot-Th\'{e}l\`{e}ne and Sansuc \cite[Proposition 9.1]{CTS87}. 

%%%%%%%%%%%%%%%%%%%%%%%%%%%%%%%%%
\section{{Proof of Theorem \ref{thmain1}}}\label{S4}

Let $K/k$ be a separable field extension of degree $16$ 
and $L/k$ be the Galois closure of $K/k$. 
Let $G={\rm Gal}(L/k)=16Tm$ $(1\leq m\leq 1954)$ 
be a transitive subgroup of $S_{16}$ 
and $H={\rm Gal}(L/K)$ with $[G:H]=16$. 

Let $T=R^{(1)}_{K/k}(\bG_m)$ be the norm one torus of $K/k$ 
of dimension $15$. 
We have the character module $\widehat{T}=J_{G/H}$ of $T$ 
and then 
$H^1(k,{\rm Pic}\,\overline{X})\simeq H^1(G,[J_{G/H}]^{fl})$ 
(see Section \ref{S2}). 
We may assume that 
$H$ is the stabilizer ${\rm Stab}_1(G)$ of $1$ in $G$, 
i.e. $L=k(\theta_1,\ldots,\theta_{16})$ and $K=k(\theta_1)$. 
%We just take $K=k(\theta_1)$ for the simplicity. 
We first prepare two useful lemmas: 
%%%%%%%%%%%%%%%%%%%%%%%%%%%%%%%%%%%%%%%%%%%%%%%%
\begin{lemma}\label{lem4.1}
Let $G=nTm$ be a transitive subgroup of $S_n$ and 
$G_p={\rm Syl}_p(G)$ be a $p$-Sylow subgroup of $G$. 
If $n=p^d$ is a prime power, then $G_p\leq S_n$ is transitive. 
\end{lemma}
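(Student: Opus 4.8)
The plan is to argue directly with the orbit--stabilizer theorem together with a counting argument on coset spaces. Recall that $G=nTm\leq S_n$ is transitive on $\varOmega=\{1,\ldots,n\}$ with $n=p^d$, so $|\varOmega|=p^d$ divides $|G|$; thus $p^d\mid |G|$ and the Sylow $p$-subgroup $G_p$ has order $p^e$ for some $e\geq d$. I want to show $G_p$ is transitive on $\varOmega$, equivalently that the $G_p$-orbit of the letter $1$ has size $p^d$.

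First I would consider the double coset decomposition $G=\bigsqcup_i G_p\, g_i\, H$, where $H={\rm Stab}_1(G)$ with $[G:H]=n=p^d$. The $G_p$-orbits on $\varOmega\cong G/H$ correspond to these double cosets $G_p g_i H$, and the size of the orbit corresponding to $G_p g_i H$ equals $[G_p : G_p\cap g_i H g_i^{-1}]$, which is a power of $p$ since $G_p$ is a $p$-group. Hence every $G_p$-orbit on $\varOmega$ has $p$-power size, and these sizes sum to $|\varOmega|=p^d$.

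The key step is then a $p$-adic valuation argument. Suppose for contradiction that $G_p$ is not transitive; then $\varOmega$ is partitioned into at least two $G_p$-orbits, each of $p$-power size $p^{a_j}$ with $a_j\geq 0$ and $\sum_j p^{a_j}=p^d$. If all orbits had size $\geq p$, the sum would be divisible by $p$; but in fact one must check more carefully that at least one orbit has size divisible by a lower power — the cleanest route is: the number of fixed points of $G_p$ on $\varOmega$ is $\equiv |\varOmega| = p^d \equiv 0 \pmod p$ (standard fact for a $p$-group acting on a set), and since $G_p$ fixes none of the letters other than possibly... — actually the crispest argument is simply that $|\varOmega| = p^d$ forces, by induction on $d$, a single orbit: if $G_p$ had $\geq 2$ orbits, pick the one containing $1$, of size $p^a$ with $a<d$; then $G_p\cap H$ has index $p^a$ in $G_p$, so $|G_p\cap H| = p^{e-a}$, and one observes $G_p\cap H$ is a $p$-subgroup of $H$ of order strictly less than $p^e$. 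I would instead finish as follows: since $[G:H]=p^d$ and $|G_p|=p^e$, the $p$-part of $[G:H]$ is $p^d$; by the formula $[G:H] = \sum_i [G_p : G_p\cap g_iHg_i^{-1}]$ (summing over $G_p$-$H$ double cosets), $v_p$ of the left side is $d$, while each summand is a $p$-power $p^{b_i}$ with $b_i \leq e$; the exactly-one-orbit conclusion follows once we know some $g_i$ can be chosen with $G_p\cap g_iHg_i^{-1}$ a Sylow $p$-subgroup of $H$. This is the main obstacle, and it is resolved by the standard lemma that for $G_p\in{\rm Syl}_p(G)$ and any subgroup $H\leq G$, there exists $g\in G$ with $G_p\cap gHg^{-1}\in{\rm Syl}_p(gHg^{-1})$, equivalently $G_p\cap gHg^{-1}$ has order equal to the $p$-part of $|H|$. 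Taking such a $g$, the corresponding $G_p$-orbit has size $[G_p:G_p\cap gHg^{-1}] = p^e / p^{v_p(|H|)}$; since $|G|=|H|\cdot p^d$ we get $v_p(|G|) = v_p(|H|) + d$, i.e. $e = v_p(|H|)+d$, so this orbit has size exactly $p^d = |\varOmega|$. Therefore $G_p$ has a single orbit and is transitive, completing the proof.

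I expect the only delicate point to be invoking the existence of $g$ with $G_p\cap gHg^{-1}$ Sylow in $gHg^{-1}$; this is classical (it follows from Sylow's theorems applied inside $H$ after intersecting a Sylow of $G$ with $H$ up to conjugacy), so the whole argument is short. An alternative, perhaps even cleaner, is to note that since $n=p^d$ is a prime power and $G$ is transitive of degree $n$, any Sylow $p$-subgroup $G_p$ has order divisible by $p^d$; a $p$-group whose order is at least $|\varOmega|=p^d$ and which acts on $\varOmega$ with all orbit sizes powers of $p$ must — because the trivial-orbit (fixed-point) contribution is congruent to $|\varOmega|\equiv 0$ mod $p$ and more — be transitive; I would present whichever of these two phrasings is shortest in the final writeup, but the double-coset/Sylow-intersection version is the safest.
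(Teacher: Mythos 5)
Your proposal is correct and takes essentially the same route as the paper: the ``standard lemma'' you invoke (that some conjugate $gHg^{-1}$ intersects $G_p$ in a Sylow $p$-subgroup of $gHg^{-1}$) is exactly what the paper establishes by embedding ${\rm Syl}_p(H)$ into a Sylow $p$-subgroup of $G$ and conjugating it onto $G_p$, so that for the corresponding point the $G_p$-stabilizer is ${\rm Syl}_p({\rm Stab}_i(G))$. Both proofs then finish identically by orbit--stabilizer and the valuation count $[G_p:{\rm Syl}_p({\rm Stab}_i(G))]=p^r/p^{r-d}=p^d=n$, your double-coset phrasing being only a cosmetic repackaging of the paper's argument.
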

%
%\begin{proof}
%We take the stabilizer $H={\rm Stab}_1(G)$ of $1$ in $G$. 
%Then we have ${\rm Syl}_p(H)\leq G_p^\prime$ with $G_p^{\prime}=x\,G_p\,x^{-1}$ for some $x\in G$. 
%Hence there exists ${\rm Syl}_p({\rm Stab}_i(G))\leq G_p$ $(1\leq i\leq p^d)$ 
%such that $[G_p:{\rm Syl}_p({\rm Stab}_i(G))]=[G:H]=p^d$. 
%This implies that $|{\rm Orb}_{G_p}(i)|=p^d$. 
%Hence $G_p\leq S_{p^d}$ is transitive. 
%\end{proof}
%
\begin{proof}
We take the stabilizer $H={\rm Stab}_1(G)$ of $1$ in $G$. 
Then ${\rm Syl}_p(H)$ is a $p$-subgroup of $G$ so is contained in 
a Sylow $p$-subgroup $G_p^{\prime}$ of $G$, 
which is conjugate to $G_p$ so is of the form $G_p^{\prime}=x\,G_p\,x^{-1}$ for some $x\in G$. 
Hence ${\rm Syl}_p(H)\leq G_p^\prime$ %with $G_p^{\prime}=x\,G_p\,x^{-1}$ 
so ${\rm Syl}_p({\rm Stab}_i(G))=x^{-1}{\rm Syl}_p(H)x\leq G_p$ $(1\leq i\leq p^d)$. 
Now ${\rm Syl}_p({\rm Stab}_i(G))={\rm Stab}_i(G)\cap G_p$ (clearly 
${\rm Syl}_p({\rm Stab}_i(G))\leq {\rm Stab}_i(G)\cap G_p$ and the reverse inclusion comes from 
considering indexes in ${\rm Stab}_i(G)$). 
Hence $|{\rm Orb}_{G_p}(i)|=[G_p : {\rm Stab}_i(G_p)]=[G_p : {\rm Stab}_i(G)\cap G_p]=[G_p : {\rm Syl}_p({\rm Stab}_i(G))]$. 
Suppose that $|G|=p^rs$ with $p$ ${\not{\mid}}$ $s$. 
Because $G\leq S_n$ is transitive, 
$|{\rm Stab}_i(G)|=|G|/|{\rm Orb}_i(G)|=p^rs/p^d=p^{r-d}s$ 
and so $[G_p : {\rm Syl}_p({\rm Stab}_i(G))]=p^r/p^{r-d}=p^d=n=[G:H]$. 
This implies that $|{\rm Orb}_{G_p}(i)|=p^d$. 
Hence $G_p\leq S_{p^d}$ is transitive. 
\end{proof}
%%%%%%%%%%%%%%%%%%%%%%
\begin{lemma}\label{lem4.2}
Let $G=nTm$ be a transitive subgroup of $S_n$ and 
$G_p={\rm Syl}_p(G)$ be a $p$-Sylow subgroup of $G$. % ${\rm Syl}_p(G)$  
%$G_p$ and $H_p\leq H\cap G_p$ are $p$-Sylow subgroups of $G$ and $H$ 
Let %$H={\rm Stab}_1(G)$ be the stabilizer of $1$ in $G$ with 
$H\leq G$ be a subgroup with $[G:H]=n$ 
and $F=[J_{G/H}]^{fl}$ be a flabby class. 
Let $H^1(G,F)_{(p)}$ be the $p$-primary component of $H^1(G,F)$. 
Then there exists an injection $H^1(G,F)_{(p)}\hookrightarrow H^1(G_p,F|_{G_p})$ where $p\mid |G|$. 
In particular, 
if $H^1(G_p,F|_{G_p})=0$ for all $p\mid n$, then $H^1(G, F)=0$. 
\end{lemma}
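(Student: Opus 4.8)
The plan is to reduce the statement to the usual restriction--corestriction argument, after disposing of two bookkeeping points: that $H^1(G,F)$ and $H^1(G_p,F|_{G_p})$ depend only on the similarity classes involved, and that the only primes which can contribute to $H^1(G,F)$ are those dividing $n$. For the first point I would use that $H^1$ of a permutation lattice vanishes: by Shapiro's lemma $H^1(G,\bZ[G/H'])\simeq H^1(H',\bZ)\simeq{\rm Hom}(H',\bZ)=0$ for any subgroup $H'\leq G$, so enlarging a representative of $F$ by a permutation summand changes neither $H^1(G,F)$ nor $H^1(G_p,F|_{G_p})$. Here I also use that restriction sends permutation lattices to permutation lattices and flabby $G$-lattices to flabby $G_p$-lattices (the vanishing $\widehat H^{-1}(U,-)=0$ for $U\leq G_p\leq G$ is inherited from $G$), so that for a flabby lattice $F$ in the class $[J_{G/H}]^{fl}$ the restriction $F|_{G_p}$ is a flabby lattice in the class $[J_{G/H}|_{G_p}]^{fl}$.

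For the injection itself I would invoke the identity ${\rm cor}^G_{G_p}\circ{\rm res}^G_{G_p}=[G:G_p]\cdot{\rm id}$ on $H^1(G,F)$. Since $[G:G_p]$ is prime to $p$, it acts invertibly on the finite abelian $p$-group $H^1(G,F)_{(p)}$, so ${\rm res}^G_{G_p}$ is injective on $H^1(G,F)_{(p)}$, giving the desired injection $H^1(G,F)_{(p)}\hookrightarrow H^1(G_p,F|_{G_p})$ for every prime $p\mid|G|$.

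It then remains to see that $H^1(G,F)_{(p)}=0$ whenever $p\mid|G|$ but $p\nmid n$; this is the one step I expect to require an actual computation, though it is short. When $p\nmid n=[G:H]$ the $p$-part of $|G|$ equals the $p$-part of $|H|$, so a $p$-Sylow subgroup of $H$ is a $p$-Sylow subgroup of $G$, and after conjugating we may assume $G_p\leq H$. Mackey's formula then gives $\bZ[G/H]|_{G_p}\simeq\bZ\oplus P'$ with $P'$ a permutation $G_p$-lattice, the trivial summand $\bZ$ arising from the double coset $G_pH=H$; feeding the exact sequence $0\to\bZ\to\bZ[G/H]\to J_{G/H}\to 0$ into this decomposition — the image of $1$ being the norm element, which equals the basis vector of the trivial summand plus the sum of a $\bZ$-basis of $P'$ — a linear change of coordinates identifies $J_{G/H}|_{G_p}$ with $P'$. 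Hence $[J_{G/H}|_{G_p}]^{fl}=0$, so $H^1(G_p,F|_{G_p})=0$ and therefore $H^1(G,F)_{(p)}=0$.

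Finally, since $H^1(G,F)$ is a finite abelian group, it is the direct sum of its $p$-primary components over $p\mid|G|$, and by the previous paragraph only the $p\mid n$ can occur; each such component injects into $H^1(G_p,F|_{G_p})$, so if all these groups vanish then $H^1(G,F)=0$. The whole argument is light; the only care needed is the interplay of $H^1$ with similarity classes and restriction, together with the permutation-lattice identification for $p\nmid n$.
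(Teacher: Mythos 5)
Your proposal is correct, and the core of it --- the restriction--corestriction identity ${\rm cor}^G_{G_p}\circ{\rm res}^G_{G_p}=[G:G_p]$ acting invertibly on the $p$-primary component --- is exactly the paper's argument for the injection. The only place you diverge is in disposing of the primes $p\mid |G|$ with $p\nmid n$: you conjugate a Sylow $p$-subgroup into $H$ and use Mackey's formula to see that $J_{G/H}|_{G_p}$ is itself a permutation lattice, whence $H^1(G_p,F|_{G_p})=0$. The paper instead restricts all the way to $H$: since $\bZ[G/H]|_H$ contains a trivial summand, $[F|_H]=0$, so ${\rm Res}^G_H$ annihilates $H^1(G,F)$ and ${\rm Cor}^G_H\circ{\rm Res}^G_H=n$ gives $n\cdot H^1(G,F)=0$ in one stroke, killing all $p$-primary components with $p\nmid n$ simultaneously. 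Both routes rest on the same Mackey decomposition (the trivial double coset splitting off a trivial summand); the paper's version is marginally more economical, while yours makes the vanishing of $H^1(G_p,F|_{G_p})$ for $p\nmid n$ explicit. Your preliminary remarks on well-definedness (that $H^1$ kills permutation summands and that restriction preserves flabbiness) are correct and are simply taken for granted in the paper.
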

\begin{proof} 
We have $H^1(G,F)=\bigoplus_{p\,\mid\, |G|}H^1(G,F)_{(p)}$ where 
$H^1(G,F)_{(p)}$ is the $p$-primary component of $H^1(G,F)$. 
Because $[G:G_p]$ is relatively prime to $p$, %and 
${\rm Cor}^G_{G_p}\circ{\rm Res}^G_{G_p}=[G:G_p]$ (see e.g. Brown \cite[Proposition 9.5]{Bro82}) 
gives an isomorphism on $H^1(G,F)_{(p)}$.
In particular,  
${\rm Res}^G_{G_p}: H^1(G,F)\rightarrow H^1(G_p,F|_{G_p})$ induces 
an injection $H^1(G,F)_{(p)}\hookrightarrow H^1(G_p,F|_{G_p})$ where $p\mid |G|$. 
On the other hand, ${\rm Cor}^G_H\circ {\rm Res}^G_H=[G:H]=n$ 
and ${\rm Res}^G_H:H^1(G,F)\rightarrow H^1(H,F|_{H})=0$ because 
$[F|_H]=0$ (see also Macedo and Newton \cite[Lemma 2.6]{MN22}). 
Hence $n\cdot H^1(G,F)=0$. 
In other words, $H^1(G,F)_{(p)}=0$ if $p$ $\not{|}$ $n$. 
It follows from $H^1(G_p,F|_{G_p})=0$ that $H^1(G,F)_{(p)}=0$ for all $p\mid n$. 
We conclude that $H^1(G, F)=0$. 
\end{proof}
%%%%%%%%%%%%%%%%%%%%%%%%%

We made the following GAP (\cite{GAP}) function 
in order to compute a flabby class $F=[J_{G/H}]^{fl}$ efficiently:\\

{\tt FlabbyResolutionNorm1TorusJ(}$d${\tt ,}$m${\tt ).actionF} returns 
the matrix representation of the action of $G$ on a flabby class 
$F=[J_{G/H}]^{fl}$ for the $m$-th transitive subgroup $G=dTm\leq S_n$ 
of degree $n$ where $H$ is the stabilizer of one of the letters in $G$. 
(This function is similar to {\tt FlabbyResolution(Norm1TorusJ(}$d${\tt ,}$m${\tt )).actionF} but 
it may speed up and save memory resources.)\\ 

{\it Proof of Theorem \ref{thmain1}.} 
By Lemma \ref{lem4.1} and Lemma \ref{lem4.2}, 
because if $H^1(G_2,F|_{G_2})=0$, then 
$H^1(G,F)=0$ where $F=[J_{G/H}]^{fl}$ and $G_2={\rm Syl}_2(G)$ 
is a $2$-Sylow subgroup of $G$, 
we should first compute 
$H^1(G,F)\simeq H^1(k,{\rm Pic}\,\overline{X})$ 
for any $2$-group 
$G=16Tm\leq S_{16}$. 
There exist $1427$ transitive $2$-groups $G=16Tm\leq S_{16}$ (out of $1954$). 
In order to compute $H^1(G,F)$ for $2$-groups $G=16Tm$, 
we apply %the function 
{\tt FlabbyResolutionNorm1TorusJ($16,m$).actionF} 
to get a flabby class $F=[J_{G/H}]^{fl}$ 
and {\tt H1($F$)} to get $H^1(G,F)$. 
Then we see that there exist $773$ 
(resp. $628$, $24$, $36$, $1$, $5$) $2$-groups $G=16Tm$ 
such that $H^1(G,F)=0$ (resp. $\bZ/2\bZ$, 
$(\bZ/2\bZ)^{\oplus 2}$, 
$(\bZ/2\bZ)^{\oplus 3}$, 
$(\bZ/2\bZ)^{\oplus 6}$, 
$\bZ/4\bZ$). 
In particular, there exist $694$ $(=628+24+36+1+5)$ 
$2$-groups $G=16Tm$ such that $H^1(G,F)\neq 0$. 

For the $527$ $(=1954-1427)$ non-$2$-groups $G=16Tm$, 
we find that $287$ (resp. $240$) of the $527$ cases 
satisfy 
$H^1({\rm Syl}_2(G),F|_{{\rm Syl}_2(G)})=0$ (resp. $\neq 0$). 
Then we again apply %the functions 
{\tt FlabbyResolutionNorm1TorusJ($16,m$).actionF} 
and {\tt H1($F$)} to get $H^1(G,F)$ 
for the $240$ non-$2$-groups $G$ 
with $H^1({\rm Syl}_2(G),F|_{{\rm Syl}_2(G)})\neq 0$ 
(see Example \ref{ex4.3}). 

%%%%%%%%%%%%%%%%%%%%%%%%%%%%%%%%%%%%%%%
The last assertion follows from Theorem \ref{thV}.\qed
%
%%%
\begin{remark}
When $|H|$ is odd, it follows from 
Macedo and Newton \cite[Corollary 3.4]{MN22} that 
$H^1(G,F)\simeq H^3(G,\bZ)_{(2)}$ 
where $F=[J_{G/H}]^{fl}$ with $[G:H]=16$ 
and $H^3(G,\bZ)_{(2)}$ is the $2$-primary component of 
the Schur multiplier $H^3(G,\bZ)$.
There exist $14$ (resp. $10$, $1$, $1$, $1$, $1$, $4$) cases 
with $|H|=1$ (resp. $3$, $5$, $7$, $9$, $15$, $21$) 
out of $1954$ $G=16Tm$ cases. %$(1\leq m\leq 1954)$. 
\end{remark}
%
%%%%%%%%%%%%%%%%%%%%%%%%%%%%%%%%%%%%%%%%%%%%%%%
{\it Alternative proof of Theorem \ref{thmain1} when $G=16Tm$ is primitive.} 
%Let $K/k$ be a separable field extension of degree $16$ 
%and $L/k$ be the Galois closure of $K/k$. 
%Let $G={\rm Gal}(L/k)=16Tm$ $(1\leq m\leq 1954)$ 
%be a transitive subgroup of $S_{16}$ 
%and $H={\rm Gal}(L/K)$ with $[G:H]=16$. 
We assume that $G\leq S_{16}$ is primitive. 
Then there exist $22$ primitive $G=16Tm$ with 
$m=178$, $415$, $447$, $708$, $711$, $777$, $1030$, $1034$, $1079$, 
$1080$, $1081$, $1294$, $1328$, $1329$, $1508$, $1653$, $1654$, 
$1753$, $1840$, $1906$, $1953$, $1954$. 

Let $T=R^{(1)}_{K/k}(\bG_m)$ be the norm one torus of $K/k$ 
of dimension $15$. 
We have the character module $\widehat{T}=J_{G/H}$ of $T$ 
and then 
$H^1(k,{\rm Pic}\,\overline{X})\simeq H^1(G,[J_{G/H}]^{fl})$ 
(see Section \ref{S2}). 
We may assume that 
$H$ is the stabilizer of $1$ in $G$, 
i.e. $L=k(\theta_1,\ldots,\theta_{16})$ and $K=k(\theta_1)$. 
%We just take $K=k(\theta_1)$ for the simplicity. 
In order to compute $H^1(G,[J_{G/H}]^{fl})$, 
we apply %the functions  
{\tt FlabbyResolutionNorm1TorusJ($16,m$).actionF} and  {\tt H1($F$)} 
to obtain a flabby class $F=[J_{G/H}]^{fl}$ and  $H^1(G,F)$ 
for primitive $G=16Tm\leq S_{16}$ except for 
$G=16T1953\simeq A_{16}$, $16T1954\simeq S_{16}$ (see Example \ref{exH1F}). 

%%%%%%%%%%%%%%%%%%%%%
For $G=16T1953\simeq A_{16}$, $16T1954\simeq S_{16}$, 
we did not get an answer by computer calculations 
because it needs much time and memory resources. %in computations. 
However, 
we already know that 
$H^1(G,F)=0$ for 
$G=16T1953\simeq A_{16}$, $16T1954\simeq S_{16}$  
by Macedo \cite{Mac20} 
and Voskresenskii and Kunyavskii \cite{VK84} respectively 
(see also \cite[Theorem 4, Corollary]{Vos88} 
and \cite[Theorem 1.10 and Theorem 1.11]{HKY22}). 

The last assertion follows from Theorem \ref{thV}.\qed\\

Some related functions for Example \ref{ex4.3} and Example \ref{exH1F}  
are available 
as in \cite{Norm1ToriHNP}. 
%from\\ 
%\begin{center}
%%\url{https://www.math.kyoto-u.ac.jp/~yamasaki/Algorithm/RatProbNorm1Tori/}.
%\url{https://www.math.kyoto-u.ac.jp/~yamasaki/Algorithm/Norm1ToriHNP/}.
%\end{center}

\smallskip
\begin{example}[{Computation of $H^1(G,[J_{G/H}]^{fl})$ for $G=16Tm$  $(1\leq m\leq 1954)$}]\label{ex4.3}
~{}\vspace*{-4mm}\\
{\small 
% [inline block 0: 1 envs, 74475 chars -> code_tex | \begin{verbatim} gap> Read("FlabbyResolutionFromBase.gap");...]

}
\end{example}

\smallskip
\begin{example}[{Computation of $H^1(G,[J_{G/H}]^{fl})$ for primitive $G=16Tm$}]\label{exH1F}
~{}\vspace*{-4mm}\\
{\small 
\begin{verbatim}
gap> Read("FlabbyResolutionFromBase.gap");
gap> NrTransitiveGroups(16); # there exist 1954 transitive groups G=16Tm
1954
gap> NrPrimitiveGroups(16); # there exist 22 primitive transitive groups G=16Tm
22
gap> prim16:=Filtered([1..1954],x->IsPrimitive(TransitiveGroup(16,x)));
[ 178, 415, 447, 708, 711, 777, 1030, 1034, 1079, 1080, 1081, 1294, 1328, 
  1329, 1508, 1653, 1654, 1753, 1840, 1906, 1953, 1954 ]
gap> List(prim16,x->StructureDescription(TransitiveGroup(16,x)));
[ "(C2 x C2 x C2 x C2) : C5", 
  "((C2 x C2 x C2 x C2) : C5) : C2",
  "(C2 x C2 x C2 x C2) : C15", 
  "(((C2 x C2 x C2 x C2) : C3) : C2) : C3",
  "((C2 x C2 x C2 x C2) : C5) : C4", 
  "(((C2 x C2 x C2 x C2) : C5) : C2) : C3", 
  "(A4 x A4) : C4", 
  "((((C2 x C2 x C2 x C2) : C3) : C2) : C3) : C2",
  "((C2 x C2 x C2 x C2) : C15) : C4", 
  "(C2 x C2 x C2 x C2) : A5",
  "(C2 x C2 x C2 x C2) : A5", 
  "(S4 x S4) : C2", 
  "(C2 x C2 x C2 x C2) : S5",
  "(C2 x C2 x C2 x C2) : S5", 
  "(C2 x C2 x C2 x C2) : GL(2,4)",
  "(C2 x C2 x C2 x C2) : (A5 : S3)", 
  "(C2 x C2 x C2 x C2) : A6",
  "(C2 x C2 x C2 x C2) : S6", 
  "(C2 x C2 x C2 x C2) : A7",
  "(C2 x C2 x C2 x C2) : A8", 
  "A16", 
  "S16" ]
gap> for i in prim16{[1..20]} do
> F:=FlabbyResolutionNorm1TorusJ(16,i).actionF;
> Print([[16,i],Length(F.1),Filtered(H1(F),x->x>1)],"\n");od; 
# computing H^1(G,F) for 20 primitive transitive groups G=16Tm except for A16, S16
[ [ 16, 178 ], 65, [ 2, 2 ] ]
[ [ 16, 415 ], 265, [  ] ]
[ [ 16, 447 ], 225, [  ] ]
[ [ 16, 708 ], 297, [ 2 ] ]
[ [ 16, 711 ], 415, [  ] ]
[ [ 16, 777 ], 345, [  ] ]
[ [ 16, 1030 ], 507, [  ] ]
[ [ 16, 1034 ], 651, [  ] ]
[ [ 16, 1079 ], 555, [  ] ]
[ [ 16, 1080 ], 1725, [ 2, 2 ] ]
[ [ 16, 1081 ], 955, [  ] ]
[ [ 16, 1294 ], 1083, [  ] ]
[ [ 16, 1328 ], 1075, [  ] ]
[ [ 16, 1329 ], 2595, [ 2 ] ]
[ [ 16, 1508 ], 1725, [  ] ]
[ [ 16, 1653 ], 3915, [  ] ]
[ [ 16, 1654 ], 4875, [ 2 ] ]
[ [ 16, 1753 ], 4875, [ 2 ] ]
[ [ 16, 1840 ], 16635, [  ] ]
[ [ 16, 1906 ], 26715, [  ] ]
\end{verbatim}
}
\end{example}

%
%%%%%%%%%%%%%%%%%%%%%%%%%%%%%%%%%
\section{Drakokhrust and Platonov's method}\label{S5}

Let $k$ be a number field, $K/k$ be a finite extension, 
$\bA_K^\times$ be the idele group of $K$ and 
$L/k$ be the Galois closure of $K/k$. 
Let $G={\rm Gal}(L/k)=nTm$ be a transitive subgroup of $S_n$ 
and $H={\rm Gal}(L/K)$ with $[G:H]=n$. 

For $x,y\in G$, we denote $[x,y]=x^{-1}y^{-1}xy$ the commutator of 
$x$ and $y$, and $[G,G]$ the commutator group of $G$. 
Let $V_k$ be the set of all places of $k$ 
and $G_v$ be the decomposition group of $G$ at $v\in V_k$. 

\begin{definition}[{Drakokhrust and Platonov \cite[page 350]{PD85a}, \cite[page 300]{DP87}}]
Let $k$ be a number field, 
$L\supset K\supset k$ be a tower of finite extensions 
where $L$ is normal over $k$. 

We call the group 
\begin{align*}
{\rm Obs}(K/k)=(N_{K/k}(\bA_K^\times)\cap k^\times)/N_{K/k}(K^\times)
\end{align*}
{\it the total obstruction to the Hasse norm principle for $K/k$} 
and 
\begin{align*}
{\rm Obs}_1(L/K/k)=\left(N_{K/k}(\bA_K^\times)\cap k^\times\right)/\left((N_{L/k}(\bA_L^\times)\cap k^\times)N_{K/k}(K^\times)\right)
\end{align*}
{\it the first obstruction to the Hasse norm principle for $K/k$ 
%with respect to 
corresponding to the tower 
$L\supset K\supset k$}. 
\end{definition}

Note that (i) 
${\rm Obs}(K/k)=1$ if and only if 
the Hasse norm principle holds for $K/k$; 
and (ii) ${\rm Obs}_1(L/K/k)
={\rm Obs}(K/k)/(N_{L/k}(\bA_L^\times)\cap k^\times)$. 

Drakokhrust and Platonov gave a formula 
for computing the first obstruction ${\rm Obs}_1(L/K/k)$: 
%for $K/k$: 

\begin{theorem}[{Drakokhrust and Platonov \cite[page 350]{PD85a}, \cite[pages 789--790]{PD85b}, \cite[Theorem 1]{DP87}}]\label{thDP2}
%Let $k$ be a number field, $K/k$ be a finite extension 
%and $L/k$ be the Galois closure of $K/k$.
Let $k$ be a number field, 
$L\supset K\supset k$ be a tower of finite extensions 
where 
$L$ is normal over $k$.  
Let $G={\rm Gal}(L/k)$ and $H={\rm Gal}(L/K)$. %with $[G:H]=n$. 
Let $G_v$ $($resp. $H_w$$)$ be the decomposition group of $G$ $($resp. $H$$)$ at $v\in V_k$ $($resp. $w\in V_K$$)$.  
Then 
\begin{align*}
%\Sha(R^{(1)}_{K/k}(\bG_m))
{\rm Obs}_1(L/K/k)\simeq 
{\rm Ker}\, \psi_1/\varphi_1({\rm Ker}\, \psi_2)
\end{align*}
where 
\begin{align*}
\begin{CD}
H/[H,H] @>\psi_1 >> G/[G,G]\\
@AA\varphi_1 A @AA\varphi_2 A\\
\displaystyle{\bigoplus_{v\in V_k}\left(\bigoplus_{w\mid v} H_w/[H_w,H_w]\right)} @>\psi_2 >> 
\displaystyle{\bigoplus_{v\in V_k} G_v/[G_v,G_v]}, 
\end{CD}
\end{align*}
$\psi_1$, $\varphi_1$ and $\varphi_2$ are defined 
by the inclusions $H\subset G$, $H_w\subset H$ and $G_v\subset G$ respectively, and 
\begin{align*}
\psi_2(h[H_{w},H_{w}])=x^{-1}hx[G_v,G_v]
\end{align*}
%for $h\in H_{w}=H\cap x^{-1}hx[G_v,G_v]$ $(x\in G)$.
for $h\in H_{w}=H\cap xG_vx^{-1}$ $(x\in G)$. 
\end{theorem}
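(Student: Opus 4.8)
I would prove the formula directly, working with idele groups and using class field theory to translate the norm conditions into the group-theoretic data of the displayed square.

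\emph{Step 1: localize the norm groups.} Because $L/k$ is Galois, $N_{L/k}(\bA_L^\times)$ equals the restricted product $\prod_v' N_{L_w/k_v}(L_w^\times)$ over the places $v$ of $k$; hence
\[
N_{L/k}(\bA_L^\times)\cap k^\times=\{a\in k^\times:\ a\in N_{L_w/k_v}(L_w^\times)\ \textrm{for every}\ v\},
\]
and likewise $N_{K/k}(\bA_K^\times)\cap k^\times=\{a\in k^\times:\ a\in\prod_{w\mid v}N_{K_w/k_v}(K_w^\times)\ \textrm{for every}\ v\}$, where the places $w$ of $K$ above $v$ correspond to the double cosets in $G_v\backslash G/H$ and $H_w=H\cap x_wG_vx_w^{-1}$ for the associated representative $x_w$. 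Thus ${\rm Obs}_1(L/K/k)$ is the quotient of the second set by the subgroup generated by $N_{K/k}(K^\times)$ and the first set, and the target $\ker\psi_1/\varphi_1(\ker\psi_2)$ makes sense since $\psi_1\varphi_1=\varphi_2\psi_2$ forces $\varphi_1(\ker\psi_2)\subseteq\ker\psi_1$.

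\emph{Step 2: build the comparison map.} For a place $u$ of a number field let $\rho_u$ be the local reciprocity map: $\rho_{k_v}$ is onto $G_v/[G_v,G_v]$ with kernel $N_{L_w/k_v}(L_w^\times)$, $\rho_{K_w}$ is onto $H_w/[H_w,H_w]$ with kernel the local norms from $L$, the $\rho_u$ are compatible with norms (the norm $N_{K_w/k_v}$ corresponding to the inclusion $H_w\subseteq G_v$ realised by $h\mapsto x_w^{-1}hx_w$), and the global maps $\rho_k,\rho_K$ are the sums of the local ones and kill the principal ideles. Given $a$ in the second set of Step 1, choose for each $v$ elements $c_{v,w}\in K_w^\times$ $(w\mid v)$ with $\prod_{w\mid v}N_{K_w/k_v}(c_{v,w})=a$, and set $\Lambda(a)=\varphi_1\bigl((\rho_{K_w}(c_{v,w}))_{v,\,w\mid v}\bigr)\in H/[H,H]$ (a finite sum, as almost all terms vanish). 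Applying $\psi_1$ and using norm-compatibility gives $\psi_1(\Lambda(a))=\varphi_2\bigl((\rho_{k_v}(a))_v\bigr)=\rho_k(a)=0$, so $\Lambda(a)\in\ker\psi_1$. Changing the $c_{v,w}$ alters $\Lambda(a)$ by $\varphi_1$ of a tuple $(\rho_{K_w}(d_{v,w}))$ with $\prod_{w\mid v}N_{K_w/k_v}(d_{v,w})=1$, which by norm-compatibility lies in $\ker\psi_2$; so $\Lambda$ is well defined modulo $\varphi_1(\ker\psi_2)$. Moreover, if $a=N_{K/k}(b)$ with $b\in K^\times$ one may take each $c_{v,w}$ to be the image of $b$ in $K_w$, whence $\Lambda(a)=\rho_K(b)=0$; and if $a\in N_{L_w/k_v}(L_w^\times)$ for all $v$ one may take the $c_{v,w}$ to be, at one place above each $v$, a norm from $L$, which lies in $\ker\rho_{K_w}$, so $\Lambda(a)=0$ again. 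Hence $\Lambda$ descends to a homomorphism $\bar\Lambda\colon{\rm Obs}_1(L/K/k)\to\ker\psi_1/\varphi_1(\ker\psi_2)$.

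\emph{Step 3: bijectivity, and the main difficulty.} For surjectivity, lift $\xi\in\ker\psi_1\subseteq H/[H,H]$ to $\rho_K(\gamma)$ with $\gamma\in\bA_K^\times$; then $\rho_k(N_{K/k}(\gamma))=\psi_1(\xi)=0$, so $N_{K/k}(\gamma)=a\cdot N_{L/k}(\delta)$ for some $a\in k^\times$, $\delta\in\bA_L^\times$ (using $\ker\rho_k=k^\times N_{L/k}(\bA_L^\times)$, itself coming from the fact that $N_{L/k}(\bA_L^\times)$ cuts out the maximal abelian subextension of $L/k$); then $a=N_{K/k}(\gamma\,N_{L/K}(\delta)^{-1})$ lies in $N_{K/k}(\bA_K^\times)\cap k^\times$ and, computing $\Lambda$ with the local decomposition furnished by this idele, $\bar\Lambda(a)=\rho_K(\gamma)=\xi$. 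For injectivity, if $a=N_{K/k}(\gamma)$ and $\Lambda(a)\in\varphi_1(\ker\psi_2)$, realise a representative of that class as $\rho_K(\epsilon)$ for an idele $\epsilon$ whose $v$-component is a product of local norms from $K$ that is itself a local norm from $L$, for every $v$; then $\gamma\epsilon^{-1}\in\ker\rho_K=K^\times N_{L/K}(\bA_L^\times)$, say $\gamma\epsilon^{-1}=b\,N_{L/K}(\mu)$ with $b\in K^\times$, $\mu\in\bA_L^\times$, and applying $N_{K/k}$ gives $a=N_{K/k}(\epsilon)\,N_{K/k}(b)\,N_{L/k}(\mu)$ with $N_{K/k}(\epsilon)\in N_{L/k}(\bA_L^\times)$, so $a\in\bigl(N_{L/k}(\bA_L^\times)\cap k^\times\bigr)N_{K/k}(K^\times)$, as needed. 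I expect the genuine obstacle to be the bookkeeping inside Step 2: proving $\Lambda$ is independent of the chosen local decompositions and vanishes on $N_{L/k}(\bA_L^\times)\cap k^\times$ forces one to track precisely how each $\rho_{K_w}$ sits inside $G_v/[G_v,G_v]$ through the conjugation by the double-coset representative $x_w$ — which is exactly the point where the formula $\psi_2(h[H_w,H_w])=x_w^{-1}hx_w[G_v,G_v]$ must be pinned down, and where the erratum in the statement arises.
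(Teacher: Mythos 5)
The paper quotes this theorem from Drakokhrust and Platonov \cite{DP87} without reproducing a proof, and your argument is a correct reconstruction of their original class-field-theoretic route: both the numerator and denominator of ${\rm Obs}_1(L/K/k)$ are translated through the local and global reciprocity maps, whose compatibility with norms along $H_w=H\cap x_wG_vx_w^{-1}\hookrightarrow x_wG_vx_w^{-1}$ is precisely what produces the conjugation $h\mapsto x^{-1}hx$ defining $\psi_2$. The one detail to make explicit is that in defining $\Lambda(a)$ you must choose the local decompositions so that $\rho_{K_w}(c_{v,w})=0$ for almost all $(v,w)$ (possible because at almost every $v$ the element $a$ is a unit and $L/k$ is unramified, hence $a$ is already a local norm from $L$), so that the resulting tuple genuinely lies in the direct sum $\bigoplus_{v}\bigoplus_{w\mid v}H_w/[H_w,H_w]$ and two admissible choices differ by an element of ${\rm Ker}\,\psi_2$; with that remark your proof is complete.
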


%%%%%%%%%%%%%%%%%%%%%%%%%%%%%

Let $\psi_2^{v}$ be the restriction of $\psi_2$ to the subgroup 
$\bigoplus_{w\mid v} H_w/[H_w,H_w]$ with respect to $v\in V_k$ 
and $\psi_2^{\rm nr}$ (resp. $\psi_2^{\rm r}$) be 
the restriction of $\psi_2$ to the unramified (resp. the ramified) 
places $v$ of $k$. 
\begin{proposition}[{Drakokhrust and Platonov \cite{DP87}}]\label{propDP}
Let $k$, 
$L\supset K\supset k$, 
$G$ and $H$ be as in Theorem \ref{thDP2}.\\
{\rm (i)} $($\cite[Lemma 1]{DP87}$)$ 
Places $w_i\mid v$ of $K$ are in one-to-one correspondence 
with the set of double cosets in the decomposition 
$G=\cup_{i=1}^{r_v} Hx_iG_v$ where $H_{w_i}=H\cap x_iG_vx_i^{-1}$;\\
{\rm (ii)} $($\cite[Lemma 2]{DP87}$)$ 
If $G_{v_1}\leq G_{v_2}$, then $\varphi_1({\rm Ker}\,\psi_2^{v_1})\subset \varphi_1({\rm Ker}\,\psi_2^{v_2})$;\\
{\rm (iii)} $($\cite[Theorem 2]{DP87}$)$ 
$\varphi_1({\rm Ker}\,\psi_2^{\rm nr})=\Phi^G(H)/[H,H]$ 
where $\Phi^G(H)=\langle [h,x]\mid h\in H\cap xHx^{-1}, x\in G\rangle$;\\
{\rm (iv)} $($\cite[Lemma 8]{DP87}$)$ If $[K:k]=p^r$ $(r\geq 1)$ 
and ${\rm Obs}(K_p/k_p)=1$ where $k_p=L^{G_p}$, $K_p=L^{H_p}$, 
$G_p$ and $H_p\leq H\cap G_p$ are $p$-Sylow subgroups of $G$ and $H$ 
respectively, then ${\rm Obs}(K/k)=1$.
\end{proposition}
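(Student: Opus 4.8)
The plan is to establish the four parts of Proposition~\ref{propDP} in turn, taking Theorem~\ref{thDP2} and its notation $\psi_1,\psi_2,\varphi_1$ for granted. For (i) I would invoke the classical description of how primes split in a tower: fix a place $\widetilde v$ of $L$ above $v$ with ${\rm Gal}(L_{\widetilde v}/k_v)=G_v$; since $L/k$ is Galois, $G$ acts transitively on the places of $L$ above $v$, giving a $G$-equivariant bijection of that set with $G/G_v$, so the places of $K=L^H$ above $v$ are the $H$-orbits on $G/G_v$, i.e.\ the double cosets in $G=\bigcup_{i=1}^{r_v}Hx_iG_v$. For the place $w_i$ attached to $Hx_iG_v$ one has $L_{x_i\widetilde v}\supseteq K_{w_i}\supseteq k_v$, hence $H_{w_i}={\rm Gal}(L_{x_i\widetilde v}/K_{w_i})=H\cap{\rm Gal}(L_{x_i\widetilde v}/k_v)=H\cap x_iG_vx_i^{-1}$.

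For (ii), write $A^{\rm ab}=A/[A,A]$. Assuming $G_{v_1}\le G_{v_2}$, choose the double-coset representatives for $v_1$ and $v_2$ compatibly so that each coset $HxG_{v_1}$ lies inside a coset $HxG_{v_2}$; the inclusions $H\cap xG_{v_1}x^{-1}\hookrightarrow H\cap xG_{v_2}x^{-1}$ and $G_{v_1}\hookrightarrow G_{v_2}$ then induce, after abelianization, a map $\alpha\colon\bigoplus_{w\mid v_1}H_w^{\rm ab}\to\bigoplus_{w'\mid v_2}H_{w'}^{\rm ab}$ making the square with $\psi_2^{v_1}$, $\psi_2^{v_2}$ and $G_{v_1}^{\rm ab}\to G_{v_2}^{\rm ab}$ commute, and compatible with the two occurrences of $\varphi_1$ (both induced by $H_w\subseteq H$). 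A diagram chase then gives $\alpha({\rm Ker}\,\psi_2^{v_1})\subseteq{\rm Ker}\,\psi_2^{v_2}$ and $\varphi_1(\xi)=\varphi_1(\alpha(\xi))$ for $\xi\in{\rm Ker}\,\psi_2^{v_1}$, whence the asserted inclusion of images.

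Part (iii) is the heart of the matter, and I expect its harder half to be the main obstacle. A place $v$ unramified in $L$ has cyclic decomposition group, say $G_v=\langle g\rangle$; then $G_v^{\rm ab}=G_v$, every $H_w=H\cap x\langle g\rangle x^{-1}$ is cyclic with $H_w^{\rm ab}=H_w$, and $\psi_2^v$ restricted to the $H_w$-summand is the injection $h\mapsto x^{-1}hx$ into $\langle g\rangle$. To get $\Phi^G(H)/[H,H]\subseteq\varphi_1({\rm Ker}\,\psi_2^{\rm nr})$: given $a\in H\cap xHx^{-1}$, use Chebotarev to pick an unramified $v$ with $G_v=\langle x^{-1}ax\rangle$; the trivial double coset $HG_v$ yields the summand $\langle x^{-1}ax\rangle$ (with $\psi_2$-image $x^{-1}ax$) while $HxG_v$ yields $\langle a\rangle$ (also with $\psi_2$-image $x^{-1}ax$), so $(x^{-1}ax)\ominus(a)\in{\rm Ker}\,\psi_2^v$ and $\varphi_1$ sends it to $\overline{(x^{-1}ax)a^{-1}}=\overline{a[a,x]a^{-1}}=\overline{[a,x]}$ in $H/[H,H]$, using $[a,x]=a^{-1}(x^{-1}ax)\in H$. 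The reverse inclusion $\varphi_1({\rm Ker}\,\psi_2^{\rm nr})\subseteq\Phi^G(H)/[H,H]$ requires showing that for every cyclic $G_v$ and every tuple $(h_w)_w$ with $\sum_w x_w^{-1}h_wx_w=0$ in $G_v$, the product $\prod_w h_w$ lies in $\Phi^G(H)$ modulo $[H,H]$; this is a bookkeeping argument over the double-coset set $H\backslash G/G_v$ that does not reduce to the single commutator identity above, and it is where I anticipate the real work.

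For (iv) I would first note that $a^{[K:k]}=N_{K/k}(a)\in N_{K/k}(K^\times)$ for $a\in k^\times$, so ${\rm Obs}(K/k)$ is killed by $[K:k]=p^r$ and is therefore a finite abelian $p$-group. Next, since $H_p\le G_p$ with $H_p$, $G_p$ Sylow in $H$, $G$, a counting argument forces $H_p=H\cap G_p$, hence $K_p=L^{H_p}=L^{H\cap G_p}=Kk_p$; moreover $K$ and $k_p$ are linearly disjoint over $k$ because $[K:k]$ is a $p$-power, so $K_p=K\otimes_k k_p$. Because norms commute with this base change, one gets a well-defined map ${\rm Obs}(K/k)\to{\rm Obs}(K_p/k_p)$, and composing it with $N_{k_p/k}\colon{\rm Obs}(K_p/k_p)\to{\rm Obs}(K/k)$ yields multiplication by $[k_p:k]=[G:G_p]$, an integer coprime to $p$ and hence an automorphism of the $p$-group ${\rm Obs}(K/k)$. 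Thus ${\rm Obs}(K/k)$ injects into ${\rm Obs}(K_p/k_p)$, so the vanishing of the latter forces ${\rm Obs}(K/k)=1$. (Alternatively, (iv) can be run through Ono's theorem, Theorem~\ref{thOno}, translating to $\Sha(T)^{\vee}\simeq\Sha^2(G,\widehat{T})$ and using the same $\mathrm{cor}\circ\mathrm{res}=[G:G_p]$ device on group cohomology, but the norm-theoretic argument is more direct here.)
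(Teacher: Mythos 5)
The paper does not prove Proposition \ref{propDP} at all: it is quoted verbatim from Drakokhrust and Platonov \cite{DP87} (their Lemma 1, Lemma 2, Theorem 2 and Lemma 8), so there is no internal proof to compare against. Judged on its own terms, your proposal is sound for parts (i), (ii) and (iv): the double-coset description of places in (i) is the standard splitting argument and your identification $H_{w_i}=H\cap x_iG_vx_i^{-1}$ is correct; the diagram chase in (ii) works once the $v_1$-representatives inside a fixed $v_2$-coset are chosen of the form $xg_j$ with $g_j\in G_{v_2}$, so that the inclusions $H_{w_j}\hookrightarrow H_{w'}$ are literal rather than up to $H$-conjugacy (worth saying explicitly, since $\psi_2$ is defined via the chosen representative); and the $\mathrm{cor}\circ\mathrm{res}=[G:G_p]$ argument in (iv), together with the observation that ${\rm Obs}(K/k)$ is killed by $p^r$ and that $K_p=K\otimes_kk_p$ by the degree count $[H:H_p]=[G:G_p]$, is exactly the right mechanism.

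The genuine gap is in (iii), and you have flagged it yourself: you prove only the inclusion $\Phi^G(H)/[H,H]\subseteq\varphi_1({\rm Ker}\,\psi_2^{\rm nr})$ (correctly, via Chebotarev and the identity $(x^{-1}ax)a^{-1}\equiv[a,x]\bmod [H,H]$, though you should also dispose of the degenerate case $HxG_v=HG_v$, where the two summands coincide but $[a,x]$ is then already a commutator in $H$). The reverse inclusion $\varphi_1({\rm Ker}\,\psi_2^{\rm nr})\subseteq\Phi^G(H)/[H,H]$ is the substantive half of \cite[Theorem 2]{DP87}: for a cyclic $G_v=\langle g\rangle$ each $H_{w_i}$ is generated by $x_ig^{n_i}x_i^{-1}$ with $n_i=|Hx_iG_v|/|H|$, and one must show that $\sum_in_im_i\equiv 0\pmod{|g|}$ forces $\prod_ix_ig^{n_im_i}x_i^{-1}\in\Phi^G(H)$; this rests on a lemma to the effect that, modulo $\Phi^G(H)$, a product $x g^{m}x^{-1}\cdot yg^{n}y^{-1}$ of elements of $H$ can be rewritten as a single conjugate $zg^{m+n}z^{-1}$ lying in $H$, which is not a formal consequence of anything you have written. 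As it stands the proposal establishes only one inclusion of (iii), so the proof is incomplete.
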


Note that the inverse direction of Proposition \ref{propDP} (iv) 
does not hold in general. 
For example, if $n=8$, $G=8T13\simeq A_4\times C_2$ and 
there exists a place $v$ of $k$ such that $G_v\simeq V_4$, 
then ${\rm Obs}(K/k)=1$ but $G_2=8T3\simeq (C_2)^3$ 
and ${\rm Obs}(K_2/k_2)\neq 1$ may occur 
(see Hoshi, Kanai and Yamasaki \cite[Theorem 1.18]{HKY22}, cf. Lemma \ref{lem4.2} also).

\begin{theorem}[{Drakokhrust and Platonov \cite[Theorem 3, Corollary 1]{DP87}}]\label{thDP87}
Let $k$, 
$L\supset K\supset k$, 
$G$ and $H$ be as in Theorem \ref{thDP2}. 
Let $H_i\leq G_i\leq G$ $(1\leq i\leq m)$, 
$H_i\leq H\cap G_i$, 
$k_i=L^{G_i}$ and $K_i=L^{H_i}$. 
If ${\rm Obs}(K_i/k_i)=1$ for all $1\leq i\leq m$ and 
\begin{align*}
\bigoplus_{i=1}^m \widehat{H}^{-3}(G_i,\bZ)\xrightarrow{\rm cores} 
\widehat{H}^{-3}(G,\bZ)
\end{align*}
is surjective, 
then ${\rm Obs}(K/k)={\rm Obs}_1(L/K/k)$. 
In particular, 
if $[K:k]=n$ is square-free, 
then ${\rm Obs}(K/k)={\rm Obs}_1(L/K/k)$.
\end{theorem}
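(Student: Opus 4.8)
The plan is to reduce the assertion ${\rm Obs}(K/k)={\rm Obs}_1(L/K/k)$ to the single inclusion
\[
N_{L/k}(\bA_L^\times)\cap k^\times\;\subseteq\;N_{K/k}(K^\times),
\]
and then to prove this inclusion by combining Tate's theorem for the Galois extension $L/k$ with the hypothesis ${\rm Obs}(K_i/k_i)=1$. The reduction is formal: transitivity of the norm gives $N_{L/k}(\bA_L^\times)\subseteq N_{K/k}(\bA_K^\times)$ and $N_{L/k}(L^\times)\subseteq N_{K/k}(K^\times)$, so there is a natural homomorphism ${\rm Obs}(L/k)\to{\rm Obs}(K/k)$, and the canonical surjection ${\rm Obs}(K/k)\twoheadrightarrow{\rm Obs}_1(L/K/k)$ has kernel exactly the image of ${\rm Obs}(L/k)$ (this is the content of the remark ${\rm Obs}_1(L/K/k)={\rm Obs}(K/k)/(N_{L/k}(\bA_L^\times)\cap k^\times)$ following the definition). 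Hence the theorem is equivalent to the vanishing of that image, i.e. to the displayed inclusion.

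First I would describe ${\rm Obs}(L/k)$ cohomologically. Applying Theorem \ref{thTate} to $L/k$, and to each $L/k_i$ (noting ${\rm Gal}(L/k_i)=G_i$ since $k_i=L^{G_i}$), identifies ${\rm Obs}(L/k)$ with ${\rm Coker}\bigl(\bigoplus_{v\in V_k}\widehat{H}^{-3}(G_v,\bZ)\xrightarrow{{\rm cores}}\widehat{H}^{-3}(G,\bZ)\bigr)$, and ${\rm Obs}(L/k_i)$ with the analogous cokernel for $G_i$. By transitivity of corestriction, for each $i$ the inclusion $G_i\hookrightarrow G$ induces a commutative square between these two cokernel presentations whose induced map on cokernels is the norm $N_{k_i/k}\colon{\rm Obs}(L/k_i)\to{\rm Obs}(L/k)$, compatibly with the corestriction $\widehat{H}^{-3}(G_i,\bZ)\to\widehat{H}^{-3}(G,\bZ)$. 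Since $\bigoplus_i\widehat{H}^{-3}(G_i,\bZ)\to\widehat{H}^{-3}(G,\bZ)$ is surjective by assumption and ${\rm Obs}(L/k)$ is a quotient of $\widehat{H}^{-3}(G,\bZ)$ through which this map factors via $\bigoplus_i{\rm Obs}(L/k_i)$, I conclude that ${\rm Obs}(L/k)$ is generated by the images of the norms $N_{k_i/k}$; equivalently,
\[
N_{L/k}(\bA_L^\times)\cap k^\times\;=\;N_{L/k}(L^\times)\cdot\prod_{i=1}^{m}N_{k_i/k}\bigl(N_{L/k_i}(\bA_L^\times)\cap k_i^\times\bigr).
\]
I expect this step — verifying that corestriction along $G_i\hookrightarrow G$ matches the idelic norm $N_{k_i/k}$ under Tate's isomorphism — to be the main obstacle; it is essentially the functoriality packaged in Drakokhrust and Platonov's earlier work, and the rest of the argument is norm manipulation.

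It then suffices to show every factor on the right lies in $N_{K/k}(K^\times)$. For $N_{L/k}(L^\times)$ this holds by transitivity of the norm along $L\supset K\supset k$. For the $i$-th factor, transitivity along $L\supset K_i\supset k_i$ together with $N_{L/K_i}(\bA_L^\times)\subseteq\bA_{K_i}^\times$ gives $N_{L/k_i}(\bA_L^\times)\cap k_i^\times\subseteq N_{K_i/k_i}(\bA_{K_i}^\times)\cap k_i^\times$, and the latter equals $N_{K_i/k_i}(K_i^\times)$ since ${\rm Obs}(K_i/k_i)=1$. Applying $N_{k_i/k}$, and using $K_i=L^{H_i}\supseteq L^{H}=K$ (valid because $H_i\leq H\cap G_i\leq H$), transitivity of the norm yields $N_{k_i/k}\bigl(N_{K_i/k_i}(K_i^\times)\bigr)=N_{K_i/k}(K_i^\times)\subseteq N_{K/k}(K^\times)$. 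This establishes the required inclusion, hence ${\rm Obs}(K/k)={\rm Obs}_1(L/K/k)$.

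Finally, for the square-free case I would exhibit suitable subgroups: for each prime $p\mid|G|$ pick a Sylow $p$-subgroup $H_p$ of $H$ and a Sylow $p$-subgroup $G_p$ of $G$ with $H_p=H\cap G_p$, and take the pairs $(G_p,H_p)$. Then $[K_p:k_p]=[G_p:H_p]=p^{v_p(n)}\in\{1,p\}$ because $n$ is square-free, so ${\rm Obs}(K_p/k_p)=1$ — trivially when the index is $1$, and by the prime-degree case of the Hasse norm principle recalled in Section \ref{S3} (e.g. \cite[Proposition 9.1]{CTS87}) when it is $p$. Moreover $\bigoplus_p\widehat{H}^{-3}(G_p,\bZ)\to\widehat{H}^{-3}(G,\bZ)$ is surjective because ${\rm cores}\circ{\rm res}=[G:G_p]$ is invertible on the $p$-primary component $\widehat{H}^{-3}(G,\bZ)_{(p)}$ for every prime $p$. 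Thus the hypotheses of the first assertion are met, and the conclusion ${\rm Obs}(K/k)={\rm Obs}_1(L/K/k)$ follows.
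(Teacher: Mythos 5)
Your argument is correct. Note that the paper itself offers no proof of this statement: Theorem \ref{thDP87} is quoted verbatim from Drakokhrust and Platonov \cite[Theorem 3, Corollary 1]{DP87}, so there is no internal proof to compare against. Your route — reducing to the inclusion $N_{L/k}(\bA_L^\times)\cap k^\times\subseteq N_{K/k}(K^\times)$, expressing ${\rm Obs}(L/k)$ via Theorem \ref{thTate}, using the compatibility of corestriction with the idelic norms $N_{k_i/k}$ under the Tate isomorphism, and then pushing each factor through $N_{K_i/k_i}$ and down to $N_{K/k}(K^\times)$ via $K\subseteq K_i$ — is essentially the original Drakokhrust--Platonov argument, and your Sylow-subgroup reduction for the square-free case (using the prime-degree Hasse norm principle recalled in Section \ref{S3} and the invertibility of ${\rm cores}\circ{\rm res}=[G:G_p]$ on $p$-primary components) is exactly their Corollary 1. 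The one step you correctly flag as needing care, the identification of the map induced on cokernels by ${\rm cores}^G_{G_i}$ with $N_{k_i/k}$, is standard functoriality of the reciprocity isomorphism and is where the real content lies; as a proof sketch this is sound.
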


We note that if $L/k$ is an unramified extension, 
then $A(T)=0$ and $H^1(k,{\rm Pic}\, \overline{X})\simeq 
H^1(G,[J_{G/H}]^{fl})\simeq \Sha(T)\simeq 
{\rm Obs}(K/k)$ where $T=R^{(1)}_{K/k}(\bG_m)$ 
(see Theorem \ref{thV} and Theorem \ref{thOno}). 
If, in addition, ${\rm Obs}(K/k)={\rm Obs}_1(L/K/k)$ 
(e.g. $[K:k]=n$ is square-free as in Theorem \ref{thDP87}), 
then ${\rm Obs}(K/k)={\rm Obs}_1(L/K/k)=
{\rm Ker}\, \psi_1/\varphi_1({\rm Ker}\, \psi_2^{\rm nr})\simeq$ 
$((H$ $\cap$ $[G,G])/[H,H])$$/$$(\Phi^G(H)/[H,H])\simeq (H$ $\cap$ $[G,G])/\Phi^G(H)$ 
(see Proposition \ref{propDP} (iii)). 

\begin{theorem}[{Drakokhrust \cite[Theorem 1]{Dra89}, see also Opolka \cite[Satz 3]{Opo80}}]\label{thDra89}
Let $k$, 
$L\supset K\supset k$, 
$G$ and $H$ be as in Theorem \ref{thDP2}. 
Assume that $\widetilde{L}\supset L\supset k$ is 
a tower of Galois extensions with 
$\widetilde{G}={\rm Gal}(\widetilde{L}/k)$ 
and $\widetilde{H}={\rm Gal}(\widetilde{L}/K)$ 
which correspond to a central extension 
$1\to A\to \widetilde{G}\to G\to 1$ with 
$A\cap[\widetilde{G},\widetilde{G}]\simeq M(G)=H^2(G,\bC^\times)$; 
the Schur multiplier of $G$ 
$($this is equivalent to 
the inflation 
$M(G)\to M(\widetilde{G})$ being the zero map, 
see {\rm Beyl and Tappe \cite[Proposition 2.13, page 85]{BT82}}$)$. 
Then 
${\rm Obs}(K/k)={\rm Obs}_1(\widetilde{L}/K/k)$. 
In particular, if $\widetilde{G}$ is a Schur cover of $G$, 
i.e. $A\simeq M(G)$, then ${\rm Obs}(K/k)={\rm Obs}_1(\widetilde{L}/K/k)$. 
\end{theorem}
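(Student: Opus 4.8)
The plan is to reduce the assertion, via the elementary identity ${\rm Obs}_1(\widetilde L/K/k)={\rm Obs}(K/k)/\bigl(N_{\widetilde L/k}(\bA_{\widetilde L}^\times)\cap k^\times\bigr)$ recorded just after the definition of the obstructions, to the inclusion
\[
N_{\widetilde L/k}(\bA_{\widetilde L}^\times)\cap k^\times\ \subseteq\ N_{K/k}(K^\times).
\]
Because $k\subseteq K\subseteq L$ gives $N_{L/k}(L^\times)=N_{K/k}\bigl(N_{L/K}(L^\times)\bigr)\subseteq N_{K/k}(K^\times)$, it is enough to prove the sharper inclusion $N_{\widetilde L/k}(\bA_{\widetilde L}^\times)\cap k^\times\subseteq N_{L/k}(L^\times)$, i.e. ${\rm Obs}_1(\widetilde L/L/k)={\rm Obs}(L/k)$. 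This last equality is a statement about the Galois extension $L/k$ only (it is, in substance, Opolka's result), and the whole problem is thereby reduced to the Galois case.

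For the Galois case I would use Tate's theorem (Theorem \ref{thTate}) for both $\widetilde L/k$ and $L/k$. The inclusions $N_{\widetilde L/k}(\bA_{\widetilde L}^\times)=N_{L/k}\bigl(N_{\widetilde L/L}(\bA_{\widetilde L}^\times)\bigr)\subseteq N_{L/k}(\bA_L^\times)$ and $N_{\widetilde L/k}(\widetilde L^\times)\subseteq N_{L/k}(L^\times)$ induce a natural homomorphism ${\rm Obs}(\widetilde L/k)\to{\rm Obs}(L/k)$ which on a class represented by a global idele norm $a\in N_{\widetilde L/k}(\bA_{\widetilde L}^\times)\cap k^\times$ is simply ``regard $a$ as lying in $N_{L/k}(\bA_L^\times)\cap k^\times$''. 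The key step is to check that, under the isomorphisms of Theorem \ref{thTate}, this map is induced by the functorial (coinflation) maps $\widehat H^{-3}(\widetilde G,\bZ)\to\widehat H^{-3}(G,\bZ)$ and $\widehat H^{-3}(\widetilde G_v,\bZ)\to\widehat H^{-3}(G_v,\bZ)$ attached to the surjection $\widetilde G\twoheadrightarrow G=\widetilde G/A$ (with $A={\rm Gal}(\widetilde L/L)$ central) and its restrictions to compatibly chosen decomposition groups $\widetilde G_v\twoheadrightarrow G_v$: concretely, these coinflation maps commute with the corestriction maps of Theorem \ref{thTate} (since $\widetilde G_v\hookrightarrow\widetilde G\twoheadrightarrow G$ equals $\widetilde G_v\twoheadrightarrow G_v\hookrightarrow G$), so they descend to a map of the two cokernels realizing ${\rm Obs}(\widetilde L/k)\to{\rm Obs}(L/k)$.

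Granting this, the conclusion is formal. Identify $\widehat H^{-3}(\widetilde G,\bZ)\simeq H_2(\widetilde G,\bZ)$ and $\widehat H^{-3}(G,\bZ)\simeq H_2(G,\bZ)\simeq M(G)$. The five-term exact sequence of the central extension $1\to A\to\widetilde G\to G\to 1$,
\[
H_2(\widetilde G,\bZ)\ \longrightarrow\ H_2(G,\bZ)\ \longrightarrow\ A\ \longrightarrow\ \widetilde G/[\widetilde G,\widetilde G]\ \longrightarrow\ G/[G,G]\ \longrightarrow\ 0,
\]
has $\ker\bigl(A\to\widetilde G/[\widetilde G,\widetilde G]\bigr)=A\cap[\widetilde G,\widetilde G]$, hence $A\cap[\widetilde G,\widetilde G]={\rm im}\bigl(H_2(G,\bZ)\to A\bigr)={\rm coker}\bigl(H_2(\widetilde G,\bZ)\to H_2(G,\bZ)\bigr)$. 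So the hypothesis $A\cap[\widetilde G,\widetilde G]\simeq M(G)$ (equivalently, the inflation $M(G)\to M(\widetilde G)$ is zero) forces $\widehat H^{-3}(\widetilde G,\bZ)\to\widehat H^{-3}(G,\bZ)$ to be the zero map, and therefore ${\rm Obs}(\widetilde L/k)\to{\rm Obs}(L/k)$ is zero. Thus every $a\in N_{\widetilde L/k}(\bA_{\widetilde L}^\times)\cap k^\times$ has trivial class in ${\rm Obs}(L/k)$, i.e. $a\in N_{L/k}(L^\times)$; combined with $N_{L/k}(L^\times)\subseteq N_{K/k}(K^\times)$ this yields the required inclusion and hence ${\rm Obs}(K/k)={\rm Obs}_1(\widetilde L/K/k)$. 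The ``in particular'' assertion is the special case in which $\widetilde G$ is a stem extension with $A\simeq M(G)$, whence $A=A\cap[\widetilde G,\widetilde G]\simeq M(G)$.

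The main obstacle is precisely the compatibility claimed in the second paragraph: that the tautological ``restrict the extension from $\widetilde L$ down to $L$'' homomorphism ${\rm Obs}(\widetilde L/k)\to{\rm Obs}(L/k)$ corresponds, under Tate's isomorphism, to coinflation on $\widehat H^{-3}(-,\bZ)$, and that the relevant square with the corestriction maps of Theorem \ref{thTate} commutes. Verifying this requires unwinding the construction of Tate's isomorphism from the global and local fundamental classes of class field theory and tracking its behaviour under the quotient $\widetilde G\twoheadrightarrow G$ (and the compatible quotients of the decomposition groups, together with the $\widetilde G$-module maps $C_{\widetilde L}\to C_L$ given by $N_{\widetilde L/L}$); this is the technical heart of the argument, and once it is in place everything above is routine.
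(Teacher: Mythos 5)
Your overall architecture is sound: the reduction of the equality ${\rm Obs}(K/k)={\rm Obs}_1(\widetilde L/K/k)$ to the inclusion $N_{\widetilde L/k}(\bA_{\widetilde L}^\times)\cap k^\times\subseteq N_{L/k}(L^\times)$ is correct (and cleanly isolates the Galois-closure statement, which is Opolka's Satz 3), and the five-term homology sequence argument showing that the hypothesis $A\cap[\widetilde G,\widetilde G]\simeq M(G)$ forces $H_2(\widetilde G,\bZ)\to H_2(G,\bZ)$ to vanish is correct (the surjection of the finite group $H_2(G,\bZ)$ onto $A\cap[\widetilde G,\widetilde G]$ of the same order is a bijection). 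The genuine gap is the one you yourself flag: the identification of the tautological map ${\rm Obs}(\widetilde L/k)\to{\rm Obs}(L/k)$ with deflation on $\widehat H^{-3}(-,\bZ)$ under Tate's isomorphisms is asserted, not proved, and it is not a routine functoriality check. It requires knowing how the local and global fundamental classes behave under the quotient $\widetilde G\twoheadrightarrow G$ (the relation ${\rm infl}\,u_{L/k}=[\widetilde L:L]\cdot u_{\widetilde L/k}$ together with a deflation--cup-product compatibility, and the analogous local statements for compatibly chosen decomposition groups). That compatibility is precisely the substance of Opolka's Satz 3 and of the lemmas in Drakokhrust's paper, so as written your argument reduces the theorem to its own main ingredient rather than proving it; until that square is verified, the conclusion that the knot of $L/k$ dies in $\widetilde L$ does not follow.

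For comparison: the paper does not prove this theorem either, but the route it points to is different from yours. Drakokhrust works directly with the non-Galois extension $K/k$ and shows ${\rm Obs}(K/k)\simeq{\rm Ker}\,\widetilde\psi_1/\widetilde\varphi_1({\rm Ker}\,\widetilde\psi_2)$ for the diagram of Theorem \ref{thDP2} formed over $\widetilde G$, and then identifies that quotient with ${\rm Obs}_1(\widetilde L/K/k)$; there is no reduction to the Galois closure and no explicit appeal to Tate--Nakayama functoriality in that formulation. Your route, once the fundamental-class compatibility is supplied, is arguably more conceptual (it explains the hypothesis via the five-term sequence), but it is not shorter: the missing step is of the same order of difficulty as the result being proved.
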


Indeed, Drakokhrust \cite[Theorem 1]{Dra89} shows that 
${\rm Obs}(K/k)\simeq 
{\rm Ker}\, \widetilde{\psi}_1/\widetilde{\varphi}_1({\rm Ker}\, \widetilde{\psi}_2)$ where the maps $\widetilde{\psi}_1, \widetilde{\psi}_2$ and $\widetilde{\varphi}_1$ are defined as in 
\cite[page 31, the paragraph before Proposition 1]{Dra89}. 
The proof of \cite[Proposition 1]{Dra89} shows that 
this group is the same as ${\rm Obs}_1(\widetilde{L}/K/k)$ 
(see also \cite[Lemma 2, Lemma 3 and Lemma 4]{Dra89}).
%%%%

For an unramified extension $\widetilde{L}/k$ with 
$\widetilde{G}={\rm Gal}(\widetilde{L}/k)$ a Scur cover of 
$G={\rm Gal}(L/k)$, 
Theorem \ref{thDP2}, Proposition \ref{propDP} (iii) and Theorem \ref{thDra89} 
give an explicit description of $H^1(k,{\rm Pic}\, \overline{X})\simeq H^1(G,{\rm Pic}\, X_K)$ 
$\simeq H^1(G,[\widehat{T}]^{fl})\simeq \Sha^2_\omega(G,\widehat{T})\simeq {\rm Br}(X)/{\rm Br}(k)\simeq 
{\rm Br}_{\rm nr}(k(X)/k)/{\rm Br}(k)$ with $\widehat{T}\simeq J_{G/H}$: 
\begin{align*}
H^1(k,{\rm Pic}\, \overline{X})\simeq 
{\rm Obs}(K/k)\simeq {\rm Obs}_1(\widetilde{L}/K/k)\simeq 
\frac{\widetilde{H}\cap[\widetilde{G},\widetilde{G}]}{\Phi^{\widetilde{G}}(\widetilde{H})}
=\frac{\widetilde{H}\cap[\widetilde{G},\widetilde{G}]}{\langle [h,x]\mid h\in \widetilde{H}\cap x\widetilde{H}x^{-1}, x\in \widetilde{G}\rangle}, 
\end{align*}
see the paragraph after Theorem \ref{thDP87} (resp. Theorem \ref{thCTS87}), 
see also Macedo and Newton \cite[Theorem 5.3]{MN22}. 
\\

%%%%%%%%%%%%%%%%%%%%%%%%%%%%%%%%%%%%%%%%%%%%%%%%%%%%%%%%%%%%%%%%%%%%%%
In order to prove Theorem \ref{thmain2}, we use 
the functions of GAP (\cite{GAP}) which are given in 
Hoshi, Kanai and Yamasaki \cite[Section 6]{HKY22}, \cite[Section 6]{HKY23}. 
We made the following additional one for the case $G=16T1080$. 
The function {\tt StemExtensions($G$)[$j$].StemExtension} 
(resp. {\tt StemExtensions($G$)[$j$].epi}) returns the same as 
{\tt MinimalStemExtensions($G$)[$j$].MinimalStemExtension} 
(resp. {\tt MinimalStemExtensions($G$)[$j$].epi}) 
given in \cite[Section 6]{HKY22}, \cite[Section 6]{HKY23} 
but for any subgroup $A^\prime$ of $A=M(G)$ instead of 
only the ones with minimal $A/A^\prime$ 
where 
$M(G)=H^2(G,\bC^\times)$ is the Schur multiplier of $G$:\\

{\tt StemExtensions($G$)[$j$].StemExtension} 
(resp. {\tt StemExtensions($G$)[$j$].epi}) 
returns 
the $j$-th stem extension $\overline{G}=\widetilde{G}/A^\prime$, 
i.e. $\overline{A}\leq Z(\overline{G})\cap [\overline{G},\overline{G}]$, 
of $G$ provided by the Schur cover $\widetilde{G}$ of $G$ 
via {\tt SchurCoverG($G$).SchurCover} 
where $A^\prime$ is the $j$-th subgroup of $A=M(G)$
(resp. the surjective map $\overline{\pi}$) 
in the commutative diagram 
\begin{align*}
\begin{CD}
1 @>>> A=M(G) @>>> \widetilde{G}@>\pi>> G@>>> 1\\
  @. @VVV @VVV @|\\
1 @>>> \overline{A}=A/A^\prime @>>> \overline{G}=\widetilde{G}/A^\prime @>\overline{\pi}>> G@>>> 1
\end{CD}
\end{align*}
(see Robinson \cite[Exercises 11.4]{Rob96}). 
This function is based on the built-in function 
{\tt EpimorphismSchurCover} in GAP.\\

The related functions are available 
as in \cite{Norm1ToriHNP}. 
%from\\
%\url{https://www.math.kyoto-u.ac.jp/~yamasaki/Algorithm/Norm1ToriHNP/}.

%%%%%%%%%%%%%%%%
%\smallskip
%
%%%%%%%%%%%%%%%%%%%%%%%%%%%%%%%%%
\section{{Proof of Theorem \ref{thmain2}}}\label{S6}

We prove Theorem \ref{thmain2} and Corollary \ref{cor1.5}.\\

%%%%%%%%%%%%%%%%%%%%%%%%%%%%%%%%%%%%%%%%%%%%%%%%%%%%%%%%%%%%%%
{\it Proof of Theorem \ref{thmain2}.}\\ 

Let $G={\rm Gal}(L/k)=16Tm\leq S_{16}$ $(1\leq m\leq 1954)$ be 
the $m$-th transitive subgroup of $S_{16}$ and 
$H={\rm Gal}(L/K)\leq G$ with $[G:H]=16$. 
We assume that $G\leq S_{16}$ is primitive and 
$H^1(k,{\rm Pic}\, \overline{X})\neq 0$ 
where $T=R^{(1)}_{K/k}(\bG_m)$ and $X$ is a smooth $k$-compactification of $T$ 
and ${\rm Pic}\,\overline{X}$ is the Picard group of $\overline{X}=X\times_k\overline{k}$ 
as in Theorem \ref{thmain1} (Table $1$-$1$). 
Then we find that such $6$ primitive groups $G=16Tm\leq S_{16}$ with 
$m=178$, $708$, $1080$, $1329$, $1654$, $1753$ (Table $1$-$2$). 
Let $V_k$ be the set of all places of $k$ 
and $G_v$ be the decomposition group of $G$ at $v\in V_k$. 

We may assume that 
$H={\rm Stab}_1(G)$ is the stabilizer of $1$ in $G$, 
i.e. $L=k(\theta_1,\ldots,\theta_{16})$ and $K=L^H=k(\theta_1)$. 
Note that  (the multi-set) $\{{\rm Orb}_{G^\prime}(i)\mid 1\leq i\leq 16\}$ 
$(G^\prime\leq G)$ is invariant under the conjugacy actions of $G$, 
i.e. inner automorphisms of $G$.\\

%%%%%%%%%%%%%%%%%%%%%%%%%%%%%%%%%%%%%%%%%%%%%%%%%%%%%

{\rm (1)} Table $2$-$1$: $G=16Tm$ 
$(m=708$, $1329$, $1654$, $1753)$
%$($$4$ {\rm cases}$)$ 
with $\Sha(T)\leq \bZ/2\bZ$.\\ 

We split the $4$ cases $m=708$, $1329$, $1654$, $1753$ into $2$ parts (1-1), (1-2)  
according to the method to prove the assertion:\\ 

{\rm (1-1)} $m=708$ ($M(G)\simeq\bZ/2\bZ$ case). %(1 case)
%%%
Applying {\tt FirstObstructionN($G$)} and {\tt FirstObstructionDnr($G$)}, 
we have ${\rm Obs}_1(L/K/k)=1$. 
Hence we just apply Theorem \ref{thDra89}. 
We have the Schur multiplier $M(G)\simeq\bZ/2\bZ$ 
for $G=16T708$. 
We obtain a Schur cover 
$1\to M(G)\simeq\bZ/2\bZ\to \widetilde{G}\xrightarrow{\pi} G\to 1$ 
and ${\rm Obs}(K/k)={\rm Obs}_1(\widetilde{L}/K/k)$. 
By Theorem \ref{thmain1}, we have 
${\rm Ker}\, \widetilde{\psi}_1/\widetilde{\varphi}_1({\rm Ker}\, \widetilde{\psi}_2^{\rm nr})\simeq \bZ/2\bZ$. 
Apply %the function 
{\tt FirstObstructionDr($\widetilde{G},\widetilde{G}_{v_{r,s}},\widetilde{H}$)} to representatives of 
%%%%%%%%%%%%%%%%%
the orbit 
${\rm Orb}_{N_{\widetilde{G}}(\widetilde{H})\backslash \widetilde{G}/N_{\widetilde{G}}(\widetilde{G}_{v_{r,s}})}(\widetilde{G}_{v_{r,s}})$ 
of ${\widetilde{G}}_{v_{r,s}}\leq \widetilde{G}$ under the conjugate action of $\widetilde{G}$ 
which corresponds to the double coset 
$N_{\widetilde{G}}(\widetilde{H})\backslash \widetilde{G}/N_{\widetilde{G}}(\widetilde{G}_{v_{r,s}})$ 
with ${\rm Orb}_{\widetilde{G}/N_{\widetilde{G}}({\widetilde{G}}_{v_r})}({\widetilde{G}}_{v_r})$ 
$=$ $\bigcup_{s=1}^{u_r}{\rm Orb}_{N_{\widetilde{G}}(\widetilde{H})\backslash \widetilde{G}
/N_{\widetilde{G}}(\widetilde{G}_{v_{r,s}})}(\widetilde{G}_{v_{r,s}})$ 
corresponding to the $r$-th subgroup ${\widetilde{G}}_{v_r}\leq \widetilde{G}$ up to conjugacy 
%%%%%%%%%%%%%%%%%
via %the function
\begin{center}
{\tt ConjugacyClassesSubgroupsNGHOrbitRep(ConjugacyClassesSubgroups($\widetilde{G}$),$\widetilde{H}$)}
\end{center}
instead of applying it to all the representatives of the double coset $\widetilde{H}\backslash \widetilde{G}/\widetilde{G}_{v_{r,s}}$ 
which enable us to speed up and save memory resources. 
Then we can get the minimal elements of the ${\widetilde{G}}_{v_{r,s}}$'s 
with ${\rm Ker}\, \widetilde{\psi}_1/\widetilde{\varphi}_1({\rm Ker}\, \widetilde{\psi}_2)=0$ 
via %the function 
\begin{center}
{\tt MinConjugacyClassesSubgroups($l$)}.
\end{center}
Finally, we get a necessary and sufficient condition for 
${\rm Obs}_1(\widetilde{L}/K/k)$ $=$ $1$ for each case 
%by the case-by-case analysis 
(see Hoshi, Kanai and Yamasaki 
\cite[Section 6]{HKY22}, \cite[Section 7]{HKY23} and 
Example \ref{ex16-1}).

The last statement 
follows because the statements can be described in terms of 
the characteristic subgroups of $G$, i.e. 
invariants under the automorphisms of $G$. 
We can check this via %the function 
{\tt IsInvariantUnderAutG($l$)} 
(see Hoshi, Kanai and Yamasaki \cite[Section 7]{HKY23}).\\

{\rm (1-2)} $m=1329$, $1654$, $1753$ (by using minimal stem extensions with $\overline{A}\simeq\bZ/2\bZ$). %(3 cases) 
%%%
Applying %the functions 
{\tt FirstObstructionN($G$)} and {\tt FirstObstructionDnr($G$)}, 
we have ${\rm Obs}_1(L/K/k)=1$. 
For the $3$ cases 
$G=16Tm$ $(m=1329, 1654, 1753)$, 
we obtain that the Schur multipliers 
$M(G)\simeq \bZ/2\bZ\oplus\bZ/4\bZ$, 
$\bZ/2\bZ\oplus\bZ/4\bZ\oplus \bZ/3\bZ$, $(\bZ/2\bZ)^{\oplus 2}$ 
respectively. 
We can take a minimal stem extension 
$\overline{G}=\widetilde{G}/A^\prime$, 
i.e. $\overline{A}\leq Z(\overline{G})\cap [\overline{G},\overline{G}]$, 
of $G$ in the commutative diagram 
\begin{align*}
\begin{CD}
1 @>>> A=M(G) @>>> \widetilde{G}@>\pi>> G@>>> 1\\
  @. @VVV @VVV @|\\
1 @>>> \overline{A}=A/A^\prime @>>> \overline{G}=\widetilde{G}/A^\prime @>\overline{\pi}>> G@>>> 1
\end{CD}
\end{align*}
with $\overline{A}\simeq\bZ/2\bZ$ 
via %the function 
{\tt MinimalStemExtensions($G$)[$j$].MinimalStemExtension}. 
Then we apply Theorem \ref{thDP87} instead of Theorem \ref{thDra89}. 
Applying 
{\tt KerResH3Z($G,H$:iterator)}, 
we can get  ${\rm Ker}\{H^3(\overline{G}_j,\bZ)\xrightarrow{\rm res}\oplus_{i=1}^{m^\prime} H^3(G_i,\bZ)\}=0$ 
for exactly one $j$ with $\overline{A}\simeq\bZ/2\bZ$. 
Because 
$\oplus_{i=1}^{m^\prime} \widehat{H}^{-3}(G_i,\bZ)\xrightarrow{\rm cores} 
\widehat{H}^{-3}(\overline{G}_j,\bZ)$ is surjective, 
it follows from Theorem \ref{thDP87} that 
${\rm Obs}(K/k)={\rm Obs}_1(\overline{L}_j/K/k)$. 
We also checked that 
${\rm Ker}\, \overline{\psi}_1/\overline{\varphi}_1({\rm Ker}\, \overline{\psi}_2^{\rm nr})\simeq\bZ/2\bZ$ for $\overline{G}_j$ and 
${\rm Ker}\, \overline{\psi}_1/\overline{\varphi}_1({\rm Ker}\, \overline{\psi}_2^{\rm nr})=0$ for $\overline{G}_{j^\prime}$ $(j^\prime\neq j)$. 
Hence 
${\rm Obs}(K/k)\neq {\rm Obs}_1(\overline{L}_{j^\prime}/K/k)$ 
(even if $\overline{L}_{j^\prime}/k$ is unramified). 
By applying %the functions 
{\tt FirstObstructionDr($\overline{G}_j,\overline{G}_{j,v_{r,s}},\overline{H}_j$)} and {\tt MinConjugacyClassesSubgroups($l$)} 
as in the case (1-1), 
we may find the minimal elements of the $\overline{G}_{j,v_{r,s}}$'s 
with ${\rm Ker}\, \overline{\psi}_1/\overline{\varphi}_1({\rm Ker}\, \overline{\psi}_2)=0$. 
Then we get a necessary and sufficient condition for 
${\rm Obs}_1(\overline{L}_j/K/k)$ $=$ $1$ for each case 
%by the case-by-case analysis 
(see Hoshi, Kanai and Yamasaki 
\cite[Section 6]{HKY22}, \cite[Section 7]{HKY23} and 
Example \ref{ex16-1}).\\ 

%%%%%%%%%%%%%%%%%%%%%%%%%%%%%%%%%%%%%%%%%%%%%%%%%%%%%%%%%%%%%%%%%%%%%
{\rm (2)} Table $2$-$2$: $G=16Tm$ 
$(m=178$, $1080)$ with $\Sha(T)\leq (\bZ/2\bZ)^{\oplus 2}$.\\

{\rm (2-1)} $m=178$ ($M(G)\simeq (\bZ/2\bZ)^{\oplus 2}$ case). %(1 case)
%%%
Applying {\tt FirstObstructionN($G$)} and {\tt FirstObstructionDnr($G$)}, 
we have ${\rm Obs}_1(L/K/k)=1$. 
If we intend to use {\tt MinimalStemExtensions($G$)[$j$].MinimalStemExtension}, then 
we can not get ${\rm Obs}(K/k)$ via ${\rm Obs}_1(\overline{L}_j/K/k)$ with $[\overline{L}_j:K]=2$ (cf. the case (1-2)). 
Hence we apply Theorem \ref{thDra89}. 
We have the Schur multiplier $M(G)\simeq (\bZ/2\bZ)^{\oplus 2}$ 
for $G=16T178$. 
We get a Schur cover 
$1\to M(G)\simeq (\bZ/2\bZ)^{\oplus 2}\to \widetilde{G}\xrightarrow{\pi} G\to 1$ 
and ${\rm Obs}(K/k)={\rm Obs}_1(\widetilde{L}/K/k)$. 
By Theorem \ref{thmain1}, we have 
${\rm Ker}\, \widetilde{\psi}_1/\widetilde{\varphi}_1({\rm Ker}\, \widetilde{\psi}_2^{\rm nr})\simeq (\bZ/2\bZ)^{\oplus 2}$. 
Applying %the function 
{\tt FirstObstructionDr($\widetilde{G},\widetilde{G}_{v_{r,s}},\widetilde{H}$)} 
to representatives of 
%%%%%%%%%%%%%%%%%
the orbit 
${\rm Orb}_{N_{\widetilde{G}}(\widetilde{H})\backslash \widetilde{G}/N_{\widetilde{G}}(\widetilde{G}_{v_{r,s}})}(\widetilde{G}_{v_{r,s}})$ 
of ${\widetilde{G}}_{v_{r,s}}\leq \widetilde{G}$ under the conjugate action of $\widetilde{G}$ 
which corresponds to the double coset 
$N_{\widetilde{G}}(\widetilde{H})\backslash \widetilde{G}/N_{\widetilde{G}}(\widetilde{G}_{v_{r,s}})$ 
with ${\rm Orb}_{\widetilde{G}/N_{\widetilde{G}}({\widetilde{G}}_{v_r})}({\widetilde{G}}_{v_r})$ 
$=$ $\bigcup_{s=1}^{u_r}{\rm Orb}_{N_{\widetilde{G}}(\widetilde{H})\backslash \widetilde{G}
/N_{\widetilde{G}}(\widetilde{G}_{v_{r,s}})}(\widetilde{G}_{v_{r,s}})$ 
corresponding to the $r$-th subgroup ${\widetilde{G}}_{v_r}\leq \widetilde{G}$ up to conjugacy 
%%%%%%%%%%%%%%%%%
via %the function
\begin{center}
{\tt ConjugacyClassesSubgroupsNGHOrbitRep(ConjugacyClassesSubgroups($\widetilde{G}$),$\widetilde{H}$)}
\end{center}
instead of applying it to all the representatives of the double coset $\widetilde{H}\backslash \widetilde{G}/\widetilde{G}_{v_{r,s}}$ 
which enable us to speed up and save memory resources. 
Then we can get the minimal elements of the ${\widetilde{G}}_{v_{r,s}}$'s 
with ${\rm Ker}\, \widetilde{\psi}_1/\widetilde{\varphi}_1({\rm Ker}\, \widetilde{\psi}_2)=0$ 
via %the function 
\begin{center}
{\tt MinConjugacyClassesSubgroups($l$)}.
\end{center}
We get a necessary and sufficient condition for 
${\rm Obs}_1(\widetilde{L}/K/k)$ $=$ $1$ for each case 
%by the case-by-case analysis 
(see Hoshi, Kanai and Yamasaki 
\cite[Section 6]{HKY22}, \cite[Section 7]{HKY23} and 
Example \ref{ex16-1}).\\ 

%%%%%%%%%%%%%%%%%%%%%%%%%%%%%%%%%%%%%%

{\rm (2-2)} $m=1080$ (by using stem extensions with $\overline{A}\simeq (\bZ/2\bZ)^{\oplus 2}$). 
%%%
Applying %the functions 
{\tt FirstObstructionN($G$)} and {\tt FirstObstructionDnr($G$)}, 
we have ${\rm Obs}_1(L/K/k)=1$. 
For the case $G=16T1080$, 
we obtain the Schur multiplier 
$M(G)\simeq \bZ/2\bZ\oplus (\bZ/4\bZ)^{\oplus 2}$. 
If we intend to use {\tt MinimalStemExtensions($G$)[$j$].MinimalStemExtension}, then 
we can not get ${\rm Obs}(K/k)$ via ${\rm Obs}_1(\overline{L}_j/K/k)$ with $[\overline{L}_j:K]=2$ (cf. the case (1-2)). 
Hence we take a stem extension 
$\overline{G}=\widetilde{G}/A^\prime$, 
i.e. $\overline{A}\leq Z(\overline{G})\cap [\overline{G},\overline{G}]$, 
of $G$ in the commutative diagram 
\begin{align*}
\begin{CD}
1 @>>> A=M(G) @>>> \widetilde{G}@>\pi>> G@>>> 1\\
  @. @VVV @VVV @|\\
1 @>>> \overline{A}=A/A^\prime @>>> \overline{G}=\widetilde{G}/A^\prime @>\overline{\pi}>> G@>>> 1
\end{CD}
\end{align*}
with $\overline{A}\simeq (\bZ/2\bZ)^{\oplus 2}$
via %the function 
{\tt StemExtensions($G$)[$j$].StemExtension}. 
Then we apply Theorem \ref{thDP87} instead of Theorem \ref{thDra89}. 
Applying 
{\tt KerResH3Z($G,H$:iterator)}, 
we can get  ${\rm Ker}\{H^3(\overline{G}_j,\bZ)\xrightarrow{\rm res}\oplus_{i=1}^{m^\prime} H^3(G_i,\bZ)\}=0$ 
for exactly one $j$ with $\overline{A}\simeq (\bZ/2\bZ)^{\oplus 2}$. 
Because 
$\oplus_{i=1}^{m^\prime} \widehat{H}^{-3}(G_i,\bZ)\xrightarrow{\rm cores} 
\widehat{H}^{-3}(\overline{G}_j,\bZ)$ is surjective, 
it follows from Theorem \ref{thDP87} that 
${\rm Obs}(K/k)={\rm Obs}_1(\overline{L}_j/K/k)$. 
We also checked that 
${\rm Ker}\, \overline{\psi}_1/\overline{\varphi}_1({\rm Ker}\, \overline{\psi}_2^{\rm nr})\simeq (\bZ/2\bZ)^{\oplus 2}$ for $\overline{G}_j$ and 
${\rm Ker}\, \overline{\psi}_1/\overline{\varphi}_1({\rm Ker}\, \overline{\psi}_2^{\rm nr})\simeq \bZ/2\bZ$ or $0$ 
for $\overline{G}_{j^\prime}$ $(j^\prime\neq j)$. 
Hence 
${\rm Obs}(K/k)\neq {\rm Obs}_1(\overline{L}_{j^\prime}/K/k)$ 
(even if $\overline{L}_{j^\prime}/k$ is unramified). 
By applying %the functions 
{\tt FirstObstructionDr($\overline{G}_j,\overline{G}_{j,v_{r,s}},\overline{H}_j$)} and {\tt MinConjugacyClassesSubgroups($l$)} 
as in the case (2-1), 
we may find the minimal elements of the $\overline{G}_{j,v_{r,s}}$'s 
with ${\rm Ker}\, \overline{\psi}_1/\overline{\varphi}_1({\rm Ker}\, \overline{\psi}_2)=0$. 
Then we get a necessary and sufficient condition for 
${\rm Obs}_1(\overline{L}_j/K/k)$ $=$ $1$ for each case 
(see Hoshi, Kanai and Yamasaki 
\cite[Section 6]{HKY22}, \cite[Section 7]{HKY23} and 
Example \ref{ex16-1}).
\qed\\
%%%%%%%%%%%%%%%%%%%%%%%%%%%%%%%%%%%%%%%%%%%%%%%%%%%%%%%%%%%%%%%%%%%%%%

\begin{remark}
In the proof of Theorem \ref{thmain2}, 
for $6$ primitive cases $G=16Tm\leq S_{16}$  
$(m=178$, $708$, $1080$, $1329$, $1654$, $1753)$, 
we showed that there exists exactly  one extension 
$\overline{L}_j/L$ with ${\rm Obs}(K/k)={\rm Obs}_1(\overline{L}_j/K/k)$ and $[\overline{L}_j:L]
=|H^1(G,[J_{G/H}]^{fl})|=|H^1(k,{\rm Pic}\, \overline{X})|$ 
where $\widetilde{L}\supset\overline{L}_j\supset L$ and ${\rm Gal}(\widetilde{L}/L)\simeq M(G)$. 
\end{remark}

\smallskip
\begin{example}[$G=16Tm$ $(m=178, 708, 1080, 1329, 1654, 1753)$]\label{ex16-1}~\\\vspace*{-2mm}

(1) $16T178\simeq (C_2)^4\rtimes C_5$. 
{\small 
\begin{verbatim}
gap> Read("HNP.gap");
gap> Read("FlabbyResolutionFromBase.gap");
gap> Filtered(H1(FlabbyResolutionNorm1TorusJ(16,178).actionF),x->x>1); # H^1(G,[J_{G/H}]^{fl})
[ 2, 2 ]
gap> G:=TransitiveGroup(16,178); # G=16T178
t16n178
gap> H:=Stabilizer(G,1);
Group([ (2,10,7,5,11)(3,15,8,9,12)(4,14,13,6,16) ])
gap> FirstObstructionN(G,H).ker;
[ [  ], [ [ 5 ], [  ] ] ]
gap> FirstObstructionDnr(G,H).Dnr;
[ [  ], [ [ 5 ], [  ] ] ]
gap> GroupCohomology(G,3); # H^3(G,Z)=M(G): Schur multiplier of G
[ 2, 2 ]
gap> cGs:=MinimalStemExtensions(G);; # MinimalStemExtensions does not work well
gap> for cG in cGs do
> bG:=cG.MinimalStemExtension;
> bH:=PreImage(cG.epi,H);
> Print(FirstObstructionN(bG,bH).ker[1]);
> Print(FirstObstructionDnr(bG,bH).Dnr[1]);
> Print("\n");
> od;
[ 2 ][  ]
[ 2 ][  ]
[ 2 ][  ]
gap> ScG:=SchurCoverG(G); # Apply SchurCoverG
rec( SchurCover := <permutation group of size 320 with 2 generators>, 
  epi := [ (2,4,9,12,5)(3,7,15,16,8)(10,19,33,34,20)(11,21,37,38,22)(13,25,43,
        46,26)(14,27,47,48,28)(17,29,49,51,30)(18,31,50,52,32)(23,39,57,58,
        40)(24,41,60,59,42)(35,53,44,61,54)(36,55,45,62,56), 
      (1,2,6,3)(4,10,7,11)(5,13,8,14)(9,17,15,18)(12,23,16,24)(19,26,21,
        28)(20,35,22,36)(25,44,27,45)(29,50,31,49)(30,33,32,37)(34,47,38,
        43)(39,48,41,46)(40,58,42,59)(51,56,52,54)(53,63,55,64)(57,62,60,61) 
     ] -> [ (2,7,11,10,5)(3,8,12,15,9)(4,13,16,14,6), 
      (1,11)(2,8)(3,9)(4,14)(5,15)(6,12)(7,13)(10,16) ] )
gap> tG:=ScG.SchurCover;
<permutation group of size 320 with 2 generators>
gap> StructureDescription(tG);
"((C2 x C2) . (C2 x C2 x C2 x C2)) : C5"
gap> tH:=PreImage(ScG.epi,H);
<permutation group of size 20 with 3 generators>
gap> FirstObstructionN(tG,tH).ker;
[ [ 2, 2 ], [ [ 2, 10 ], [ [ 1, 0 ], [ 0, 5 ] ] ] ]
gap> FirstObstructionDnr(tG,tH).Dnr;
[ [  ], [ [ 2, 10 ], [  ] ] ]
gap> HNPtruefalsefn:=x->FirstObstructionDr(tG,PreImage(ScG.epi,x),tH).Dr[2][2];
function( x ) ... end
gap> Gcs:=ConjugacyClassesSubgroups(G);;
gap> Length(Gcs);
17
gap> GcsH:=ConjugacyClassesSubgroupsNGHOrbitRep(Gcs,H);;
gap> GcsHHNPtf:=List(GcsH,x->List(x,HNPtruefalsefn));;
gap> Collected(List(GcsHHNPtf,Set));
[ [ [ [  ] ], 6 ], [ [ [ [ 0, 5 ] ] ], 2 ], [ [ [ [ 1, 0 ] ] ], 2 ], 
  [ [ [ [ 1, 0 ], [ 0, 5 ] ] ], 5 ], [ [ [ [ 1, 5 ] ] ], 2 ] ]
gap> GcsHNPfalse1:=List(Filtered([1..Length(Gcs)],
> x->[] in GcsHHNPtf[x]),y->Gcs[y]);;
gap> Length(GcsHNPfalse1);
6
gap> GcsHNPfalse2a:=List(Filtered([1..Length(Gcs)],
> x->[[0,5]] in GcsHHNPtf[x]),y->Gcs[y]);;
gap> Length(GcsHNPfalse2a);
2
gap> GcsHNPfalse2b:=List(Filtered([1..Length(Gcs)],
> x->[[1,0]] in GcsHHNPtf[x]),y->Gcs[y]);;
gap> Length(GcsHNPfalse2b);
2
gap> GcsHNPfalse2c:=List(Filtered([1..Length(Gcs)],
> x->[[1,5]] in GcsHHNPtf[x]),y->Gcs[y]);;
gap> Length(GcsHNPfalse2c);
2
gap> GcsHNPtrue:=List(Filtered([1..Length(Gcs)],
> x->[[1,0],[0,5]] in GcsHHNPtf[x]),y->Gcs[y]);;
gap> Length(GcsHNPtrue);
5
gap> Collected(List(GcsHNPfalse1,x->StructureDescription(Representative(x))));
[ [ "1", 1 ], [ "C2", 3 ], [ "C2 x C2", 1 ], [ "C5", 1 ] ]
gap> Collected(List(GcsHNPfalse2a,x->StructureDescription(Representative(x))));
[ [ "C2 x C2", 2 ] ]
gap> Collected(List(GcsHNPfalse2b,x->StructureDescription(Representative(x))));
[ [ "C2 x C2", 2 ] ]
gap> Collected(List(GcsHNPfalse2c,x->StructureDescription(Representative(x))));
[ [ "C2 x C2", 2 ] ]
gap> Collected(List(GcsHNPtrue,x->StructureDescription(Representative(x))));
[ [ "(C2 x C2 x C2 x C2) : C5", 1 ], [ "C2 x C2 x C2", 3 ], 
  [ "C2 x C2 x C2 x C2", 1 ] ]
gap> IsInvariantUnderAutG(GcsHNPfalse1);
true
gap> IsInvariantUnderAutG(GcsHNPfalse2a);
false
gap> IsInvariantUnderAutG(GcsHNPfalse2b);
false
gap> IsInvariantUnderAutG(GcsHNPfalse2c);
false
gap> IsInvariantUnderAutG([GcsHNPfalse2a[1]]);
false
gap> IsInvariantUnderAutG([GcsHNPfalse2a[2]]);
false
gap> IsInvariantUnderAutG([GcsHNPfalse2b[1]]);
false
gap> IsInvariantUnderAutG([GcsHNPfalse2b[2]]);
false
gap> IsInvariantUnderAutG([GcsHNPfalse2c[1]]);
false
gap> IsInvariantUnderAutG([GcsHNPfalse2c[2]]);
false
gap> List(GcsHNPfalse2a,x->Orbits(Representative(x),[1..16]));
[ [ [ 1, 10, 13, 6 ], [ 2, 9, 14, 5 ], [ 3, 8, 15, 4 ], [ 7, 12, 11, 16 ] ], 
  [ [ 1, 5, 12, 8 ], [ 2, 6, 15, 11 ], [ 3, 7, 14, 10 ], [ 4, 16, 9, 13 ] ] ]
gap> List(GcsHNPfalse2b,x->Orbits(Representative(x),[1..16]));
[ [ [ 1, 11, 7, 13 ], [ 2, 8, 4, 14 ], [ 3, 9, 5, 15 ], [ 6, 12, 16, 10 ] ], 
  [ [ 1, 3, 13, 15 ], [ 2, 16, 14, 12 ], [ 4, 6, 8, 10 ], [ 5, 7, 9, 11 ] ] ]
gap> List(GcsHNPfalse2c,x->Orbits(Representative(x),[1..16]));
[ [ [ 1, 10, 7, 12 ], [ 2, 9, 4, 15 ], [ 3, 8, 5, 14 ], [ 6, 13, 16, 11 ] ], 
  [ [ 1, 12, 13, 16 ], [ 2, 15, 14, 3 ], [ 4, 9, 8, 5 ], [ 6, 11, 10, 7 ] ] ]
gap> List(Elements(GcsHNPfalse2a[1]),x->Orbit(x,1));
[ [ 1, 14, 2, 13 ], [ 1, 16, 5, 4 ], [ 1, 4, 11, 14 ], [ 1, 6, 7, 16 ], 
  [ 1, 10, 13, 6 ] ]
gap> List(Elements(GcsHNPfalse2a[2]),x->Orbit(x,1));
[ [ 1, 2, 15, 12 ], [ 1, 8, 10, 3 ], [ 1, 3, 11, 9 ], [ 1, 5, 12, 8 ], 
  [ 1, 7, 9, 15 ] ]
gap> List(Elements(GcsHNPfalse2b[1]),x->Orbit(x,1));
[ [ 1, 2, 7, 4 ], [ 1, 5, 2, 6 ], [ 1, 10, 5, 14 ], [ 1, 11, 7, 13 ], 
  [ 1, 11, 10, 16 ] ]
gap> List(Elements(GcsHNPfalse2b[2]),x->Orbit(x,1));
[ [ 1, 14, 12, 3 ], [ 1, 3, 13, 15 ], [ 1, 4, 9, 12 ], [ 1, 6, 15, 8 ], 
  [ 1, 16, 8, 9 ] ]
gap> List(Elements(GcsHNPfalse2c[1]),x->Orbit(x,1));
[ [ 1, 2, 11, 8 ], [ 1, 10, 2, 9 ], [ 1, 5, 7, 3 ], [ 1, 5, 11, 15 ], 
  [ 1, 10, 7, 12 ] ]
gap> List(Elements(GcsHNPfalse2c[2]),x->Orbit(x,1));
[ [ 1, 6, 3, 4 ], [ 1, 4, 8, 13 ], [ 1, 14, 9, 6 ], [ 1, 12, 13, 16 ], 
  [ 1, 15, 16, 14 ] ]
gap> Collected(Concatenation(List(Elements(GcsHNPfalse2a[1]),x->Orbit(x,1))));
[ [ 1, 5 ], [ 2, 1 ], [ 4, 2 ], [ 5, 1 ], [ 6, 2 ], [ 7, 1 ], [ 10, 1 ], 
  [ 11, 1 ], [ 13, 2 ], [ 14, 2 ], [ 16, 2 ] ]
gap> Filtered(last,x->x[2]=1);
[ [ 2, 1 ], [ 5, 1 ], [ 7, 1 ], [ 10, 1 ], [ 11, 1 ] ]
gap> Collected(Concatenation(List(Elements(GcsHNPfalse2a[2]),x->Orbit(x,1))));
[ [ 1, 5 ], [ 2, 1 ], [ 3, 2 ], [ 5, 1 ], [ 7, 1 ], [ 8, 2 ], [ 9, 2 ], 
  [ 10, 1 ], [ 11, 1 ], [ 12, 2 ], [ 15, 2 ] ]
gap> Filtered(last,x->x[2]=1);
[ [ 2, 1 ], [ 5, 1 ], [ 7, 1 ], [ 10, 1 ], [ 11, 1 ] ]
gap> Collected(Concatenation(List(Elements(GcsHNPfalse2b[1]),x->Orbit(x,1))));
[ [ 1, 5 ], [ 2, 2 ], [ 4, 1 ], [ 5, 2 ], [ 6, 1 ], [ 7, 2 ], [ 10, 2 ], 
  [ 11, 2 ], [ 13, 1 ], [ 14, 1 ], [ 16, 1 ] ]
gap> Filtered(last,x->x[2]=1);
[ [ 4, 1 ], [ 6, 1 ], [ 13, 1 ], [ 14, 1 ], [ 16, 1 ] ]
gap> Collected(Concatenation(List(Elements(GcsHNPfalse2b[2]),x->Orbit(x,1))));
[ [ 1, 5 ], [ 3, 2 ], [ 4, 1 ], [ 6, 1 ], [ 8, 2 ], [ 9, 2 ], [ 12, 2 ], 
  [ 13, 1 ], [ 14, 1 ], [ 15, 2 ], [ 16, 1 ] ]
gap> Filtered(last,x->x[2]=1);
[ [ 4, 1 ], [ 6, 1 ], [ 13, 1 ], [ 14, 1 ], [ 16, 1 ] ]
gap> Collected(Concatenation(List(Elements(GcsHNPfalse2c[1]),x->Orbit(x,1))));
[ [ 1, 5 ], [ 2, 2 ], [ 3, 1 ], [ 5, 2 ], [ 7, 2 ], [ 8, 1 ], [ 9, 1 ], 
  [ 10, 2 ], [ 11, 2 ], [ 12, 1 ], [ 15, 1 ] ]
gap> Filtered(last,x->x[2]=1);
[ [ 3, 1 ], [ 8, 1 ], [ 9, 1 ], [ 12, 1 ], [ 15, 1 ] ]
gap> Collected(Concatenation(List(Elements(GcsHNPfalse2c[2]),x->Orbit(x,1))));
[ [ 1, 5 ], [ 3, 1 ], [ 4, 2 ], [ 6, 2 ], [ 8, 1 ], [ 9, 1 ], [ 12, 1 ], 
  [ 13, 2 ], [ 14, 2 ], [ 15, 1 ], [ 16, 2 ] ]
gap> Filtered(last,x->x[2]=1);
[ [ 3, 1 ], [ 8, 1 ], [ 9, 1 ], [ 12, 1 ], [ 15, 1 ] ]
gap> GcsHNPfalse1C2xC2:=Filtered(GcsHNPfalse1, # G(4,2)=C2xC2
> x->IdSmallGroup(Representative(x))=[4,2]);
[ Group( [ ( 1,12)( 2,15)( 3,14)( 4, 9)( 5, 8)( 6,11)( 7,10)(13,16), 
      ( 1,11)( 2, 8)( 3, 9)( 4,14)( 5,15)( 6,12)( 7,13)(10,16) ] )^G ]
gap> List(Elements(GcsHNPfalse1C2xC2[1]),x->Orbit(x,1));
[ [ 1, 2, 3, 16 ], [ 1, 15, 10, 4 ], [ 1, 5, 9, 13 ], [ 1, 12, 11, 6 ], 
  [ 1, 7, 8, 14 ] ]
gap> Collected(Concatenation(List(Elements(GcsHNPfalse1C2xC2[1]),x->Orbit(x,1))));
[ [ 1, 5 ], [ 2, 1 ], [ 3, 1 ], [ 4, 1 ], [ 5, 1 ], [ 6, 1 ], [ 7, 1 ], 
  [ 8, 1 ], [ 9, 1 ], [ 10, 1 ], [ 11, 1 ], [ 12, 1 ], [ 13, 1 ], [ 14, 1 ], 
  [ 15, 1 ], [ 16, 1 ] ]
\end{verbatim}
}~\\\vspace*{-4mm}

(2) $16T708\simeq (((C_2)^4\rtimes C_3)\rtimes C_2)\rtimes C_3$. 
{\small 
\begin{verbatim}
gap> Read("HNP.gap");
gap> Read("FlabbyResolutionFromBase.gap");
gap> Filtered(H1(FlabbyResolutionNorm1TorusJ(16,708).actionF),x->x>1); # H^1(G,[J_{G/H}]^{fl})
[ 2 ]
gap> G:=TransitiveGroup(16,708); # G=16T708
t16n708
gap> H:=Stabilizer(G,1);
Group([ (2,16)(4,14)(5,13)(6,12)(7,15)(8,10), (2,9,14,3,13,11)(4,16,5)
(6,12,15,7,8,10) ])
gap> FirstObstructionN(G,H).ker;
[ [  ], [ [ 6 ], [  ] ] ]
gap> FirstObstructionDnr(G,H).Dnr;
[ [  ], [ [ 6 ], [  ] ] ]
gap> GroupCohomology(G,3); # H^3(G,Z)=M(G): Schur multiplier of G
[ 2 ]
gap> ScG:=SchurCoverG(G); # Apply SchurCoverG
rec( SchurCover := <permutation group of size 576 with 4 generators>, 
  Tid := [ 24, 1482 ], 
  epi := [ (2,3)(5,8)(7,9)(10,14)(11,15)(12,16)(13,17)(18,19)(22,23), 
      (1,3,2)(4,8,5)(6,9,7)(10,15,18)(11,14,19)(12,16,20)(13,17,21)(22,23,24),
      (1,4,6)(2,5,7)(3,8,9)(12,13,22)(16,17,23)(20,21,24), 
      (1,10)(2,11)(4,6)(5,7)(8,12)(9,13)(14,24)(16,17)(18,23)(20,21) ] -> 
    [ (3,16)(4,11)(5,9)(6,10)(7,8)(12,15), 
      (2,3,16)(4,14,11)(5,13,9)(6,15,8)(7,12,10), 
      (2,13,14)(3,9,11)(4,16,5)(6,8,15)(7,12,10), 
      (1,11)(2,8)(3,9)(4,14)(5,15)(6,12)(7,13)(10,16) ] )
gap> tG:=ScG.SchurCover;
<permutation group of size 576 with 4 generators>
gap> StructureDescription(tG);
"(((C2 x C2 x C2) : (C2 x C2)) : (C3 x C3)) : C2"
gap> tH:=PreImage(ScG.epi,H);
Group([ (1,2)(4,5)(6,7)(10,11)(14,18)(15,19)(16,20)(17,21)(23,24), (1,8,6,3,4,
9)(2,5,7)(10,19)(11,18)(12,21,22,20,13,24)(14,15)(16,17,23), (1,24)(2,23)
(3,22)(4,20)(5,16)(6,21)(7,17)(8,12)(9,13)(10,14)(11,18)(15,19) ])
gap> FirstObstructionN(tG,tH).ker;
[ [ 2 ], [ [ 2, 6 ], [ [ 0, 3 ] ] ] ]
gap> FirstObstructionDnr(tG,tH).Dnr;
[ [  ], [ [ 2, 6 ], [  ] ] ]
gap> HNPtruefalsefn:=x->FirstObstructionDr(tG,PreImage(ScG.epi,x),tH).Dr[1]=[2];
function( x ) ... end
gap> Gcs:=ConjugacyClassesSubgroups(G);;
gap> Length(Gcs);
46
gap> GcsH:=ConjugacyClassesSubgroupsNGHOrbitRep(Gcs,H);;
gap> GcsHHNPtf:=List(GcsH,x->List(x,HNPtruefalsefn));;
gap> Collected(List(GcsHHNPtf,Set));
[ [ [ true ], 24 ], [ [ false ], 22 ] ]
gap> GcsHNPfalse:=List(Filtered([1..Length(Gcs)],
> x->false in GcsHHNPtf[x]),y->Gcs[y]);;
gap> Length(GcsHNPfalse);
22
gap> GcsHNPtrue:=List(Filtered([1..Length(Gcs)],
> x->true in GcsHHNPtf[x]),y->Gcs[y]);;
gap> Length(GcsHNPtrue);
24
gap> Collected(List(GcsHNPfalse,x->StructureDescription(Representative(x))));
[ [ "1", 1 ], [ "A4", 2 ], [ "C2", 3 ], [ "C2 x C2", 4 ], [ "C3", 3 ], 
  [ "C3 x C3", 1 ], [ "C3 x S3", 1 ], [ "C4", 1 ], [ "C6", 2 ], [ "D8", 2 ], 
  [ "S3", 1 ], [ "S4", 1 ] ]
gap> Collected(List(GcsHNPtrue,x->StructureDescription(Representative(x))));
[ [ "(((C2 x C2 x C2 x C2) : C3) : C2) : C3", 1 ], 
  [ "((C2 x C2 x C2 x C2) : C2) : C3", 1 ], 
  [ "((C2 x C2 x C2 x C2) : C3) : C2", 1 ], [ "(C2 x C2 x C2 x C2) : C2", 1 ],
  [ "(C2 x C2 x C2 x C2) : C3", 2 ], [ "(C4 x C2) : C2", 1 ], [ "A4", 3 ], 
  [ "A4 x A4", 1 ], [ "C2 x A4", 2 ], [ "C2 x C2", 2 ], [ "C2 x C2 x A4", 1 ],
  [ "C2 x C2 x C2", 3 ], [ "C2 x C2 x C2 x C2", 1 ], [ "C2 x D8", 1 ], 
  [ "C3 x A4", 1 ], [ "C4 x C2", 1 ], [ "C6 x C2", 1 ] ]
gap> GcsHNPtrueMin:=MinConjugacyClassesSubgroups(GcsHNPtrue);;
gap> Collected(List(GcsHNPtrueMin,x->StructureDescription(Representative(x))));
[ [ "C2 x C2", 2 ], [ "C2 x C2 x C2", 1 ], [ "C4 x C2", 1 ] ]
gap> IsInvariantUnderAutG(GcsHNPtrueMin);
true
gap> GcsHNPfalseC2xC2:=Filtered(GcsHNPfalse, # G(4,2)=C2xC2
> x->IdSmallGroup(Representative(x))=[4,2]);;
gap> Length(GcsHNPfalseC2xC2);
4
gap> GcsHNPtrueC2xC2:=Filtered(GcsHNPtrue, # G(4,2)=C2xC2
> x->IdSmallGroup(Representative(x))=[4,2]);;
gap> Length(GcsHNPtrueC2xC2);
2
gap> Collected(List(List(GcsHNPfalseC2xC2,Representative),
> x->SortedList(List(Orbits(x),Length))));
[ [ [ 2, 2, 2, 2, 4, 4 ], 1 ], [ [ 4, 4, 4, 4 ], 3 ] ]
gap> Collected(List(List(GcsHNPtrueC2xC2,Representative),
> x->SortedList(List(Orbits(x),Length))));
[ [ [ 4, 4, 4, 4 ], 2 ] ]
gap> DG:=DerivedSubgroup(G);
Group([ (1,14,11)(2,15,9)(3,13,10)(5,6,7)(8,16,12), (1,6,15)(2,7,13)(3,5,14)
(4,12,16)(9,10,11), (1,8,6)(2,9,4)(3,11,7)(5,16,10)(12,14,13) ])
gap> StructureDescription(DG);
"(C2 x C2 x C2 x C2) : C3"
gap> List(List(GcsHNPfalseC2xC2,Representative),x->Intersection(x,DG));
[ C2 x C2, C2 x C2, Group([ (1,14)(2,13)(3,12)(4,11)(5,10)(6,9)(7,8)
  (15,16) ]), C2 x C2 ]
gap> List(List(GcsHNPtrueC2xC2,Representative),x->Intersection(x,DG));
[ C2 x C2, C2 x C2 ]
gap> ZG:=Centre(G);
Group(())
gap> CharacteristicSubgroups(G);
[ t16n708, <permutation group of size 96 with 6 generators>, 
  <permutation group of size 144 with 6 generators>, Group([ (2,13,14)(3,9,11)
  (4,16,5)(6,8,15)(7,12,10), (1,11)(2,8)(3,9)(4,14)(5,15)(6,12)(7,13)
  (10,16), (1,2)(3,16)(4,7)(5,6)(8,11)(9,10)(12,15)(13,14), (1,9)(2,10)(3,11)
  (4,12)(5,13)(6,14)(7,15)(8,16), (1,14)(2,13)(3,12)(4,11)(5,10)(6,9)(7,8)
  (15,16) ]), Group([ (1,6,15)(2,7,13)(3,5,14)(4,12,16)(9,10,11), (1,11)(2,8)
  (3,9)(4,14)(5,15)(6,12)(7,13)(10,16), (1,2)(3,16)(4,7)(5,6)(8,11)(9,10)
  (12,15)(13,14), (1,9)(2,10)(3,11)(4,12)(5,13)(6,14)(7,15)(8,16), (1,14)
  (2,13)(3,12)(4,11)(5,10)(6,9)(7,8)(15,16) ]), Group([ (1,11)(2,8)(3,9)(4,14)
  (5,15)(6,12)(7,13)(10,16), (1,2)(3,16)(4,7)(5,6)(8,11)(9,10)(12,15)
  (13,14), (1,9)(2,10)(3,11)(4,12)(5,13)(6,14)(7,15)(8,16), (1,14)(2,13)(3,12)
  (4,11)(5,10)(6,9)(7,8)(15,16) ]), Group(()) ]
gap> List(last,Order);
[ 288, 96, 144, 48, 48, 16, 1 ]
gap> ChG:=CharacteristicSubgroups(G);;
gap> List(ChG,Order);
[ 288, 96, 144, 48, 48, 16, 1 ]
gap> Collected(List(List(GcsHNPfalseC2xC2,Representative),
> x->List(ChG,y->Order(Intersection(y,x)))));
[ [ [ 4, 4, 2, 2, 2, 2, 1 ], 1 ], [ [ 4, 4, 4, 4, 4, 4, 1 ], 3 ] ]
gap> Collected(List(List(GcsHNPtrueC2xC2,Representative),
> x->List(ChG,y->Order(Intersection(y,x)))));
[ [ [ 4, 4, 4, 4, 4, 4, 1 ], 2 ] ]
gap> Collected(List(List(GcsHNPfalseC2xC2,Representative),
> x->List(ChG,y->IdSmallGroup(Intersection(y,x)))));
[ [ [ [ 4, 2 ], [ 4, 2 ], [ 2, 1 ], [ 2, 1 ], [ 2, 1 ], [ 2, 1 ], [ 1, 1 ] ], 
      1 ], 
  [ [ [ 4, 2 ], [ 4, 2 ], [ 4, 2 ], [ 4, 2 ], [ 4, 2 ], [ 4, 2 ], [ 1, 1 ] ], 
      3 ] ]
gap> Collected(List(List(GcsHNPtrueC2xC2,Representative),
> x->List(ChG,y->IdSmallGroup(Intersection(y,x)))));
[ [ [ [ 4, 2 ], [ 4, 2 ], [ 4, 2 ], [ 4, 2 ], [ 4, 2 ], [ 4, 2 ], [ 1, 1 ] ], 
      2 ] ]
gap> List(GcsHNPfalseC2xC2,x->Length(Elements(x)));
# The number of conjugates of each V4
[ 3, 9, 18, 3 ]
gap> List(GcsHNPtrueC2xC2,x->Length(Elements(x)));
# The number of conjugates of each V4
[ 2, 18 ]
gap> IsInvariantUnderAutG([GcsHNPtrueC2xC2[1]]);
true
gap> IsInvariantUnderAutG([GcsHNPtrueC2xC2[2]]);
true
gap> IsInvariantUnderAutG([GcsHNPfalseC2xC2[1]]);
true
gap> IsInvariantUnderAutG([GcsHNPfalseC2xC2[2]]);
true
gap> IsInvariantUnderAutG([GcsHNPfalseC2xC2[3]]);
true
gap> IsInvariantUnderAutG([GcsHNPfalseC2xC2[4]]);
true
gap> List(List(GcsHNPfalseC2xC2,Representative),
> x->List(ChG,y->Order(Intersection(y,x))));
[ [ 4, 4, 4, 4, 4, 4, 1 ], [ 4, 4, 4, 4, 4, 4, 1 ], [ 4, 4, 2, 2, 2, 2, 1 ], 
  [ 4, 4, 4, 4, 4, 4, 1 ] ]
gap> List(List(GcsHNPtrueC2xC2,Representative),
> x->List(ChG,y->Order(Intersection(y,x))));
[ [ 4, 4, 4, 4, 4, 4, 1 ], [ 4, 4, 4, 4, 4, 4, 1 ] ]
\end{verbatim}
}~\\\vspace*{-4mm}

(3) $16T1080\simeq (C_2)^4\rtimes A_5$. 
{\small 
\begin{verbatim}
gap> Read("HNP.gap");
gap> Read("FlabbyResolutionFromBase.gap");
gap> Filtered(H1(FlabbyResolutionNorm1TorusJ(16,1080).actionF),x->x>1); # H^1(G,[J_{G/H}]^{fl})
[ 2, 2 ]
gap> G:=TransitiveGroup(16,1080); # G=16T1080
t16n1080
gap> H:=Stabilizer(G,1);
Group([ (2,13,10,8,12)(3,5,15,14,6)(4,7,11,16,9), (2,4,12,14,5)(3,10,6,7,9)
(8,13,16,15,11) ])
gap> FirstObstructionN(G,H).ker;
[ [  ], [ [  ], [  ] ] ]
gap> FirstObstructionDnr(G,H).Dnr;
[ [  ], [ [  ], [  ] ] ]
gap> GroupCohomology(G,3); # H^3(G,Z)=M(G): Schur multiplier of G
[ 2, 4, 4 ]
gap> cGs:=MinimalStemExtensions(G);; # MiniamStemExtensions does not work well
gap> for cG in cGs do
> bG:=cG.MinimalStemExtension;
> bH:=PreImage(cG.epi,H);
> Print(FirstObstructionN(bG,bH).ker[1]);
> Print(FirstObstructionDnr(bG,bH).Dnr[1]);
> Print("\n");
> od;
[ 2 ][  ]
[ 2 ][  ]
[ 2 ][  ]
[  ][  ]
[  ][  ]
[  ][  ]
[  ][  ]
gap> cGs:=StemExtensions(G);; # Apply StemExtensions with |A^-|=4
gap> cGs4:=Filtered(cGs,x->Order(x.StemExtension)=4*Order(G));;
gap> Length(cGs4);
19
gap> for cG in cGs4 do
> bG:=cG.StemExtension;
> bH:=PreImage(cG.epi,H);
> Print(FirstObstructionN(bG,bH).ker[1]);
> Print(FirstObstructionDnr(bG,bH).Dnr[1]);
> Print("\n");
> od;
[ 2 ][  ]
[ 2 ][  ]
[ 2, 2 ][  ]
[ 2 ][  ]
[ 2 ][  ]
[ 2 ][  ]
[ 2 ][  ]
[ 2 ][  ]
[ 4 ][ 2 ]
[ 4 ][ 2 ]
[ 2 ][  ]
[ 4 ][ 2 ]
[ 2 ][  ]
[ 2 ][  ]
[ 4 ][ 2 ]
[ 2 ][  ]
[ 4 ][ 2 ]
[ 2 ][  ]
[ 4 ][ 2 ]
gap> cG:=cGs4[3];; # there exists i=3 such that Obs(K/k)=Obs_1(L^-_i/K/k)
gap> bG:=cG.StemExtension;
Group([ (1,9,12,22,13)(2,4,5,7,11)(6,15,20,16,8)(14,19,24,21,17), (1,2,7,9,3)
(4,13,12,5,10)(6,17,24,15,18)(8,20,19,23,14), (1,14)(3,10)(4,6)(5,9)(7,20)
(12,24)(15,19)(18,23) ])
gap> bH:=PreImage(cG.epi,H);
Group([ (1,10,22,5,7)(3,11,9,12,4)(6,23,16,19,24)(14,18,21,15,20), (1,23,12,
17,11,6,10,20,2,21)(3,24,13,16,4,18,7,8,22,14)(5,19)(9,15), (1,4)(2,13)(3,10)
(5,9)(6,14)(7,12)(8,17)(11,22)(15,19)(16,21)(18,23)(20,24) ])
gap> KerResH3Z(bG,bH:iterator); # KerResH3Z=0 => Obs(K/k)=Obs_1(L^-_3/K/k)
[ [  ], [ [ 2, 2, 2 ], [  ] ] ]
gap> FirstObstructionN(bG,bH).ker;
[ [ 2, 2 ], [ [ 2, 2 ], [ [ 1, 0 ], [ 0, 1 ] ] ] ]
gap> FirstObstructionDnr(bG,bH).Dnr;
[ [  ], [ [ 2, 2 ], [  ] ] ]
gap> HNPtruefalsefn:=x->FirstObstructionDr(bG,PreImage(cG.epi,x),bH).Dr[2][2];
function( x ) ... end
gap> Gcs:=ConjugacyClassesSubgroups(G);;
gap> Length(Gcs);
77
gap> GcsH:=ConjugacyClassesSubgroupsNGHOrbitRep(Gcs,H);;
gap> GcsHHNPtf:=List(GcsH,x->List(x,HNPtruefalsefn));;
gap> Collected(List(GcsHHNPtf,Set));
[ [ [ [  ] ], 52 ], [ [ [ [ 0, 1 ] ] ], 6 ], [ [ [ [ 1, 0 ] ] ], 6 ], 
  [ [ [ [ 1, 0 ], [ 0, 1 ] ] ], 7 ], [ [ [ [ 1, 1 ] ] ], 6 ] ]
gap> GcsHNPfalse1:=List(Filtered([1..Length(Gcs)],
> x->[] in GcsHHNPtf[x]),y->Gcs[y]);;
gap> Length(GcsHNPfalse1);
52
gap> GcsHNPfalse2a:=List(Filtered([1..Length(Gcs)],
> x->[[0,1]] in GcsHHNPtf[x]),y->Gcs[y]);;
gap> Length(GcsHNPfalse2a);
6
gap> GcsHNPfalse2b:=List(Filtered([1..Length(Gcs)],
> x->[[1,0]] in GcsHHNPtf[x]),y->Gcs[y]);;
gap> Length(GcsHNPfalse2b);
6
gap> GcsHNPfalse2c:=List(Filtered([1..Length(Gcs)],
> x->[[1,1]] in GcsHHNPtf[x]),y->Gcs[y]);;
gap> Length(GcsHNPfalse2c);
6
gap> GcsHNPtrue:=List(Filtered([1..Length(Gcs)],
> x->[[1,0],[0,1]] in GcsHHNPtf[x]),y->Gcs[y]);;
gap> Length(GcsHNPtrue);
7
gap> Collected(List(GcsHNPfalse1,x->StructureDescription(Representative(x))));
[ [ "((C2 x C2 x C2 x C2) : C3) : C2", 1 ], 
  [ "((C2 x C2 x C2 x C2) : C5) : C2", 1 ], [ "(C2 x C2 x C2 x C2) : C2", 1 ],
  [ "(C2 x C2 x C2 x C2) : C3", 1 ], [ "(C2 x C2 x C2 x C2) : C5", 1 ], 
  [ "(C4 x C2) : C2", 6 ], [ "1", 1 ], [ "A4", 5 ], [ "A5", 1 ], [ "C2", 2 ], 
  [ "C2 x C2", 8 ], [ "C2 x C2 x C2", 2 ], [ "C2 x C2 x C2 x C2", 1 ], 
  [ "C2 x D8", 3 ], [ "C2 x Q8", 1 ], [ "C3", 1 ], [ "C4", 3 ], 
  [ "C4 x C2", 3 ], [ "C5", 1 ], [ "D10", 1 ], [ "D8", 3 ], [ "Q8", 1 ], 
  [ "S3", 1 ], [ "S4", 3 ] ]
gap> Collected(List(GcsHNPfalse2a,x->StructureDescription(Representative(x))));
[ [ "(C4 x C4) : C2", 1 ], [ "(C4 x C4) : C3", 1 ], [ "A4", 1 ], [ "A5", 1 ], 
  [ "C2 x C2", 1 ], [ "C4 x C4", 1 ] ]
gap> Collected(List(GcsHNPfalse2b,x->StructureDescription(Representative(x))));
[ [ "(C4 x C4) : C2", 1 ], [ "(C4 x C4) : C3", 1 ], [ "A4", 1 ], [ "A5", 1 ], 
  [ "C2 x C2", 1 ], [ "C4 x C4", 1 ] ]
gap> Collected(List(GcsHNPfalse2c,x->StructureDescription(Representative(x))));
[ [ "(C4 x C4) : C2", 1 ], [ "(C4 x C4) : C3", 1 ], [ "A4", 1 ], [ "A5", 1 ], 
  [ "C2 x C2", 1 ], [ "C4 x C4", 1 ] ]
gap> Collected(List(GcsHNPtrue,x->StructureDescription(Representative(x))));
[ [ "(((C2 x C2 x C2 x C2) : C2) : C2) : C3", 1 ], 
  [ "((C2 x C2 x C2 x C2) : C2) : C2", 1 ], [ "(C2 x C2 x C2 x C2) : A5", 1 ],
  [ "(C2 x C2 x C2 x C2) : C2", 1 ], [ "(C2 x C2 x C2 x C2) : C3", 1 ], 
  [ "C2 x C2 x C2", 1 ], [ "C2 x C2 x C2 x C2", 1 ] ]
gap> IsInvariantUnderAutG(GcsHNPtrue);
true
gap> GcsHNPtrueMin:=MinConjugacyClassesSubgroups(GcsHNPtrue);
[ Group( [ ( 1, 3)( 2,16)( 4, 6)( 5, 7)( 8,10)( 9,11)(12,14)(13,15), 
      ( 1, 3)( 2,16)( 8,11)( 9,10)(12,13)(14,15), 
      ( 4, 5)( 6, 7)( 8,11)( 9,10)(12,14)(13,15) ] )^G ]
gap> Collected(List(GcsHNPtrueMin,x->StructureDescription(Representative(x))));
[ [ "C2 x C2 x C2", 1 ] ]
gap> GcsHNPfalse1C2xC2xC2:=Filtered(GcsHNPfalse1, # G(8,5)=C2xC2xC2
> x->IdSmallGroup(Representative(x))=[8,5]);;
gap> Length(GcsHNPfalse1C2xC2xC2);
2
gap> GcsHNPfalse2aC2xC2xC2:=Filtered(GcsHNPfalse2a, # G(8,5)=C2xC2xC2
> x->IdSmallGroup(Representative(x))=[8,5]);;
gap> Length(GcsHNPfalse2aC2xC2xC2);
0
gap> GcsHNPfalse2bC2xC2xC2:=Filtered(GcsHNPfalse2b, # G(8,5)=C2xC2xC2
> x->IdSmallGroup(Representative(x))=[8,5]);;
gap> Length(GcsHNPfalse2bC2xC2xC2);
0
gap> GcsHNPfalse2cC2xC2xC2:=Filtered(GcsHNPfalse2c, # G(8,5)=C2xC2xC2
> x->IdSmallGroup(Representative(x))=[8,5]);;
gap> Length(GcsHNPfalse2cC2xC2xC2);
0
gap> GcsHNPtrueC2xC2xC2:=Filtered(GcsHNPtrue, # G(8,5)=C2xC2xC2
> x->IdSmallGroup(Representative(x))=[8,5]);;
gap> Length(GcsHNPtrueC2xC2xC2);
1
gap> IsInvariantUnderAutG(GcsHNPfalse1C2xC2xC2);
true
gap> IsInvariantUnderAutG(GcsHNPtrueC2xC2xC2);
true
gap> ChG:=CharacteristicSubgroups(G);
[ Group(()), Group([ (1,2)(3,16)(4,7)(5,6)(8,11)(9,10)(12,15)(13,14), (1,5)
  (2,6)(3,7)(4,16)(8,12)(9,13)(10,14)(11,15), (1,7)(2,4)(3,5)(6,16)(8,14)
  (9,15)(10,12)(11,13), (1,8)(2,11)(3,10)(4,13)(5,12)(6,15)(7,14)(9,16) ]), 
  t16n1080 ]
gap> List(ChG,Order);
[ 1, 16, 960 ]
gap> E16:=ChG[2];;
gap> List(GcsHNPfalse1C2xC2xC2,
> x->Order(Intersection(Representative(x),E16)));
[ 4, 8 ]
gap> List(GcsHNPtrueC2xC2xC2,
> x->Order(Intersection(Representative(x),E16)));
[ 2 ]
gap> IsInvariantUnderAutG(GcsHNPfalse1);
false
gap> IsInvariantUnderAutG(GcsHNPfalse2a);
false
gap> IsInvariantUnderAutG(GcsHNPfalse2b);
false
gap> IsInvariantUnderAutG(GcsHNPfalse2c);
false
gap> GcsHNPfalse2aMin:=MinConjugacyClassesSubgroups(GcsHNPfalse2a);
[ Group( [ ( 1, 3)( 2,16)( 8,11)( 9,10)(12,13)(14,15), 
      ( 1, 3)( 2,16)( 4, 7)( 5, 6)( 8, 9)(10,11) ] )^G, 
  Group( [ ( 1, 3)( 2,16)( 4, 6)( 5, 7)( 8,10)( 9,11)(12,14)(13,15), 
      ( 1,16)( 2, 3)( 4, 5)( 6, 7)( 8, 9)(10,11)(12,13)(14,15), 
      ( 1,13, 2,14)( 3,15,16,12)( 4,10, 7, 9)( 5,11, 6, 8), 
      ( 1, 5,16, 4)( 2, 6, 3, 7)( 8,15, 9,14)(10,13,11,12) ] )^G ]
gap> GcsHNPfalse2bMin:=MinConjugacyClassesSubgroups(GcsHNPfalse2b);
[ Group( [ ( 1, 3)( 2,16)( 8,11)( 9,10)(12,13)(14,15), 
      ( 4, 5)( 6, 7)( 8,11)( 9,10)(12,14)(13,15) ] )^G, 
  Group( [ ( 1, 3)( 2,16)( 4, 6)( 5, 7)( 8,10)( 9,11)(12,14)(13,15), 
      ( 1,16)( 2, 3)( 4, 5)( 6, 7)( 8, 9)(10,11)(12,13)(14,15), 
      ( 1, 9,16, 8)( 2,10, 3,11)( 4,14, 5,15)( 6,12, 7,13), 
      ( 1,15, 3,13)( 2,12,16,14)( 4,11, 6, 9)( 5,10, 7, 8) ] )^G ]
gap> GcsHNPfalse2cMin:=MinConjugacyClassesSubgroups(GcsHNPfalse2c);
[ Group( [ ( 1, 3)( 2,16)( 8,11)( 9,10)(12,13)(14,15), 
      ( 1, 2)( 3,16)( 4, 6)( 5, 7)(12,13)(14,15) ] )^G, 
  Group( [ ( 1, 3)( 2,16)( 4, 6)( 5, 7)( 8,10)( 9,11)(12,14)(13,15), 
      ( 1,16)( 2, 3)( 4, 5)( 6, 7)( 8, 9)(10,11)(12,13)(14,15), 
      ( 1, 7, 3, 5)( 2, 4,16, 6)( 8,15,10,13)( 9,14,11,12), 
      ( 1,11, 2, 8)( 3, 9,16,10)( 4,15, 7,12)( 5,14, 6,13) ] )^G ]
gap> Collected(List(GcsHNPfalse2aMin,x->StructureDescription(Representative(x))));
[ [ "C2 x C2", 1 ], [ "C4 x C4", 1 ] ]
gap> Collected(List(GcsHNPfalse2bMin,x->StructureDescription(Representative(x))));
[ [ "C2 x C2", 1 ], [ "C4 x C4", 1 ] ]
gap> Collected(List(GcsHNPfalse2cMin,x->StructureDescription(Representative(x))));
[ [ "C2 x C2", 1 ], [ "C4 x C4", 1 ] ]
gap> GcsHNPfalse1C2xC2:=Filtered(GcsHNPfalse1, # G(4,2)=C2xC2
> x->IdSmallGroup(Representative(x))=[4,2]);;
gap> Length(GcsHNPfalse1C2xC2);
8
gap> GcsHNPfalse2aC2xC2:=Filtered(GcsHNPfalse2a, # G(4,2)=C2xC2
> x->IdSmallGroup(Representative(x))=[4,2]);;
gap> Length(GcsHNPfalse2aC2xC2);
1
gap> GcsHNPfalse2bC2xC2:=Filtered(GcsHNPfalse2b, # G(4,2)=C2xC2
> x->IdSmallGroup(Representative(x))=[4,2]);;
gap> Length(GcsHNPfalse2bC2xC2);
1
gap> GcsHNPfalse2cC2xC2:=Filtered(GcsHNPfalse2c, # G(4,2)=C2xC2
> x->IdSmallGroup(Representative(x))=[4,2]);;
gap> Length(GcsHNPfalse2cC2xC2);
1
gap> IsInvariantUnderAutG(GcsHNPfalse1C2xC2);
false
gap> IsInvariantUnderAutG(GcsHNPfalse2aC2xC2);
false
gap> IsInvariantUnderAutG(GcsHNPfalse2bC2xC2);
false
gap> IsInvariantUnderAutG(GcsHNPfalse2cC2xC2);
false
gap> Collected(List(List(GcsHNPfalse1C2xC2,Representative),
> x->SortedList(List(Orbits(x),Length))));
[ [ [ 2, 2, 2, 2, 4, 4 ], 3 ], [ [ 4, 4, 4 ], 1 ], [ [ 4, 4, 4, 4 ], 4 ] ]
gap> Collected(List(List(GcsHNPfalse2aC2xC2,Representative),
> x->SortedList(List(Orbits(x),Length))));
[ [ [ 2, 2, 2, 2, 2, 2, 4 ], 1 ] ]
gap> Collected(List(List(GcsHNPfalse2bC2xC2,Representative),
> x->SortedList(List(Orbits(x),Length))));
[ [ [ 2, 2, 2, 2, 2, 2, 4 ], 1 ] ]
gap> Collected(List(List(GcsHNPfalse2cC2xC2,Representative),
> x->SortedList(List(Orbits(x),Length))));
[ [ [ 2, 2, 2, 2, 2, 2, 4 ], 1 ] ]
gap> GcsHNPfalse2a16:=Filtered(GcsHNPfalse2a,x->Order(Representative(x)) mod 16=0);;
gap> GcsHNPfalse2b16:=Filtered(GcsHNPfalse2b,x->Order(Representative(x)) mod 16=0);;
gap> GcsHNPfalse2c16:=Filtered(GcsHNPfalse2c,x->Order(Representative(x)) mod 16=0);;
gap> Length(GcsHNPfalse2a16);
3
gap> Length(GcsHNPfalse2b16);
3
gap> Length(GcsHNPfalse2c16);
3
gap> GcsHNPfalse2a16sC2xC2:=List(GcsHNPfalse2a16, # G(4,2)=C2xC2
> x->Filtered(ConjugacyClassesSubgroups(Representative(x)),
> y->IdSmallGroup(Representative(y))=[4,2]));;
gap> GcsHNPfalse2b16sC2xC2:=List(GcsHNPfalse2b16, # G(4,2)=C2xC2
> x->Filtered(ConjugacyClassesSubgroups(Representative(x)),
> y->IdSmallGroup(Representative(y))=[4,2]));;
gap> GcsHNPfalse2c16sC2xC2:=List(GcsHNPfalse2c16, # G(4,2)=C2xC2
> x->Filtered(ConjugacyClassesSubgroups(Representative(x)),
> y->IdSmallGroup(Representative(y))=[4,2]));;
gap> List(GcsHNPfalse2a16sC2xC2,Length);
[ 1, 7, 1 ]
gap> List(GcsHNPfalse2b16sC2xC2,Length);
[ 1, 7, 1 ]
gap> List(GcsHNPfalse2c16sC2xC2,Length);
[ 1, 7, 1 ]
gap> List(GcsHNPfalse2a16sC2xC2,x->Collected(List(List(x,Representative),
> y->SortedList(List(Orbits(y),Length)))));
[ [ [ [ 4, 4, 4, 4 ], 1 ] ], 
  [ [ [ 2, 2, 2, 2, 4, 4 ], 3 ], [ [ 4, 4, 4, 4 ], 4 ] ], 
  [ [ [ 4, 4, 4, 4 ], 1 ] ] ]
gap> List(GcsHNPfalse2b16sC2xC2,x->Collected(List(List(x,Representative),
> y->SortedList(List(Orbits(y),Length)))));
[ [ [ [ 4, 4, 4, 4 ], 1 ] ], 
  [ [ [ 2, 2, 2, 2, 4, 4 ], 3 ], [ [ 4, 4, 4, 4 ], 4 ] ], 
  [ [ [ 4, 4, 4, 4 ], 1 ] ] ]
gap> List(GcsHNPfalse2c16sC2xC2,x->Collected(List(List(x,Representative),
> y->SortedList(List(Orbits(y),Length)))));
[ [ [ [ 4, 4, 4, 4 ], 1 ] ], 
  [ [ [ 2, 2, 2, 2, 4, 4 ], 3 ], [ [ 4, 4, 4, 4 ], 4 ] ], 
  [ [ [ 4, 4, 4, 4 ], 1 ] ] ]
gap> GcsHNPfalse1C4xC4:=Filtered(GcsHNPfalse1, # G(16,2)=C4xC4
> x->IdSmallGroup(Representative(x))=[16,2]);;
gap> Length(GcsHNPfalse1C4xC4);
0
gap> GcsHNPfalse2aC4xC4:=Filtered(GcsHNPfalse2a, # G(16,2)=C4xC4
> x->IdSmallGroup(Representative(x))=[16,2]);;
gap> Length(GcsHNPfalse2aC4xC4);
1
gap> GcsHNPfalse2bC4xC4:=Filtered(GcsHNPfalse2b, # G(16,2)=C4xC4
> x->IdSmallGroup(Representative(x))=[16,2]);;
gap> Length(GcsHNPfalse2bC4xC4);
1
gap> GcsHNPfalse2cC4xC4:=Filtered(GcsHNPfalse2c, # G(16,2)=C4xC4
> x->IdSmallGroup(Representative(x))=[16,2]);;
gap> Length(GcsHNPfalse2cC4xC4);
1
gap> IsInvariantUnderAutG(GcsHNPfalse2aC4xC4);
false
gap> IsInvariantUnderAutG(GcsHNPfalse2bC4xC4);
false
gap> IsInvariantUnderAutG(GcsHNPfalse2cC4xC4);
false
gap> G2a2c:=Elements(GcsHNPfalse2aC2xC2[1]);;
gap> G2b2c:=Elements(GcsHNPfalse2bC2xC2[1]);;
gap> G2c2c:=Elements(GcsHNPfalse2cC2xC2[1]);;
gap> Length(G2a2c);
20
gap> Length(G2b2c);
20
gap> Length(G2c2c);
20
gap> G2a4c:=Elements(GcsHNPfalse2aC4xC4[1]);;
gap> G2b4c:=Elements(GcsHNPfalse2bC4xC4[1]);;
gap> G2c4c:=Elements(GcsHNPfalse2cC4xC4[1]);;
gap> Length(G2a4c);
5
gap> Length(G2b4c);
5
gap> Length(G2c4c);
5
gap> NS16G:=Normalizer(SymmetricGroup(16),G);
Group([ (1,6,2,9,8)(3,16,15,5,13)(7,11,14,12,10), (1,5,13)(2,4,15)(3,6,12)
(7,14,16)(8,11,10), (2,14)(3,8)(4,15)(5,9)(7,16)(11,12), (2,10)(3,4)(5,13)
(7,8)(11,12)(15,16) ])
gap> Order(NS16G);
5760
gap> Collected(List(G2a2c,x->SortedList(List(G2a4c,
> y->Order(Intersection(Normalizer(NS16G,x),Normalizer(NS16G,y)))))));
[ [ [ 24, 24, 24, 24, 96 ], 20 ] ]
gap> Collected(List(G2a2c,x->SortedList(List(G2b4c,
> y->Order(Intersection(Normalizer(NS16G,x),Normalizer(NS16G,y)))))));
[ [ [ 12, 12, 12, 12, 48 ], 20 ] ]
gap> Collected(List(G2a2c,x->SortedList(List(G2c4c,
> y->Order(Intersection(Normalizer(NS16G,x),Normalizer(NS16G,y)))))));
[ [ [ 12, 12, 12, 12, 48 ], 20 ] ]
gap> Collected(List(G2b2c,x->SortedList(List(G2a4c,
> y->Order(Intersection(Normalizer(NS16G,x),Normalizer(NS16G,y)))))));
[ [ [ 12, 12, 12, 12, 48 ], 20 ] ]
gap> Collected(List(G2b2c,x->SortedList(List(G2b4c,
> y->Order(Intersection(Normalizer(NS16G,x),Normalizer(NS16G,y)))))));
[ [ [ 24, 24, 24, 24, 96 ], 20 ] ]
gap> Collected(List(G2b2c,x->SortedList(List(G2c4c,
> y->Order(Intersection(Normalizer(NS16G,x),Normalizer(NS16G,y)))))));
[ [ [ 12, 12, 12, 12, 48 ], 20 ] ]
gap> Collected(List(G2c2c,x->SortedList(List(G2a4c,
> y->Order(Intersection(Normalizer(NS16G,x),Normalizer(NS16G,y)))))));
[ [ [ 12, 12, 12, 12, 48 ], 20 ] ]
gap> Collected(List(G2c2c,x->SortedList(List(G2b4c,
> y->Order(Intersection(Normalizer(NS16G,x),Normalizer(NS16G,y)))))));
[ [ [ 12, 12, 12, 12, 48 ], 20 ] ]
gap> Collected(List(G2c2c,x->SortedList(List(G2c4c,
> y->Order(Intersection(Normalizer(NS16G,x),Normalizer(NS16G,y)))))));
[ [ [ 24, 24, 24, 24, 96 ], 20 ] ]
\end{verbatim}
}~\\\vspace*{-4mm}

(4) $16T1329\simeq (C_2)^4\rtimes S_5$. 
{\small 
\begin{verbatim}
gap> Read("HNP.gap");
gap> Read("FlabbyResolutionFromBase.gap");
gap> Filtered(H1(FlabbyResolutionNorm1TorusJ(16,1329).actionF),x->x>1); # H^1(G,[J_{G/H}]^{fl})
[ 2 ]
gap> G:=TransitiveGroup(16,1329); # G=16T1329
t16n1329
gap> H:=Stabilizer(G,1);
Group([ (2,5,10,11,7)(3,9,15,12,8)(4,6,14,16,13), (2,12,5,4,14)(3,6,9,10,7)
(8,16,11,13,15), (3,16)(5,6)(8,14)(9,12)(10,15)(11,13) ])
gap> FirstObstructionN(G,H).ker;
[ [  ], [ [ 2 ], [  ] ] ]
gap> FirstObstructionDnr(G,H).Dnr;
[ [  ], [ [ 2 ], [  ] ] ]
gap> GroupCohomology(G,3); # H^3(G,Z)=M(G): Schur multiplier of G
[ 2, 4 ]
gap> cGs:=MinimalStemExtensions(G);;
gap> for cG in cGs do
> bG:=cG.MinimalStemExtension;
> bH:=PreImage(cG.epi,H);
> Print(FirstObstructionN(bG,bH).ker[1]);
> Print(FirstObstructionDnr(bG,bH).Dnr[1]);
> Print("\n");
> od;
[  ][  ]
[  ][  ]
[ 2 ][  ]
gap> cG:=cGs[3];; # there exists i=3 such that Obs(K/k)=Obs_1(L^-_i/K/k)
gap> bG:=cG.MinimalStemExtension;
<permutation group of size 3840 with 5 generators>
gap> bH:=PreImage(cG.epi,H);
<permutation group of size 240 with 4 generators>
gap> KerResH3Z(bG,bH:iterator); # KerResH3Z=0 => Obs(K/k)=Obs_1(L^-_3/K/k)
[ [  ], [ [ 2, 2, 2 ], [  ] ] ]
gap> HNPtruefalsefn:=x->FirstObstructionDr(bG,PreImage(cG.epi,x),bH).Dr[1]=[2];
function( x ) ... end
gap> Gcs:=ConjugacyClassesSubgroups(G);;
gap> Length(Gcs);
129
gap> GcsH:=ConjugacyClassesSubgroupsNGHOrbitRep(Gcs,H);;
gap> GcsHNPtf:=List(GcsH,x->List(x,HNPtruefalsefn));;
gap> Collected(List(GcsHNPtf,Set));
[ [ [ true ], 25 ], [ [ false ], 104 ] ]
gap> GcsHNPfalse:=List(Filtered([1..Length(Gcs)],
> x->false in GcsHNPtf[x]),y->Gcs[y]);;
gap> Length(GcsHNPfalse);
104
gap> GcsHNPtrue:=List(Filtered([1..Length(Gcs)],
> x->true in GcsHNPtf[x]),y->Gcs[y]);;
gap> Length(GcsHNPtrue);
25
gap> Collected(List(GcsHNPfalse,x->StructureDescription(Representative(x))));
[ [ "(((C2 x C2 x C2 x C2) : C2) : C3) : C2", 1 ], 
  [ "((C2 x C2 x C2 x C2) : C2) : C2", 1 ], 
  [ "((C2 x C2 x C2 x C2) : C2) : C3", 1 ], 
  [ "((C2 x C2 x C2 x C2) : C3) : C2", 2 ], 
  [ "((C2 x C2 x C2 x C2) : C5) : C2", 1 ], 
  [ "((C2 x C2 x C2 x C2) : C5) : C4", 1 ], 
  [ "((C2 x C2 x C2) : C4) : C2", 1 ], [ "((C4 x C4) : C2) : C2", 1 ], 
  [ "((C4 x C4) : C3) : C2", 1 ], [ "(C2 x C2 x C2 x C2) : C2", 2 ], 
  [ "(C2 x C2 x C2 x C2) : C3", 1 ], [ "(C2 x C2 x C2 x C2) : C5", 1 ], 
  [ "(C2 x C2 x C2) : (C2 x C2)", 1 ], [ "(C2 x C2 x C2) : C4", 4 ], 
  [ "(C2 x Q8) : C2", 1 ], [ "(C4 x C2) : C2", 8 ], [ "(C4 x C4) : C2", 2 ], 
  [ "(C4 x C4) : C3", 1 ], [ "(C4 x C4) : C4", 1 ], [ "(C8 : C2) : C2", 1 ], 
  [ "1", 1 ], [ "A4", 5 ], [ "A5", 2 ], [ "C2", 3 ], [ "C2 x A4", 1 ], 
  [ "C2 x C2", 9 ], [ "C2 x C2 x C2", 4 ], [ "C2 x C2 x C2 x C2", 1 ], 
  [ "C2 x D8", 5 ], [ "C2 x Q8", 1 ], [ "C2 x S4", 1 ], [ "C3", 1 ], 
  [ "C4", 4 ], [ "C4 x C2", 6 ], [ "C4 x C4", 1 ], [ "C5", 1 ], 
  [ "C5 : C4", 1 ], [ "C6", 1 ], [ "C8", 1 ], [ "C8 : C2", 1 ], [ "D10", 1 ], 
  [ "D12", 1 ], [ "D8", 7 ], [ "Q16", 1 ], [ "Q8", 2 ], [ "QD16", 1 ], 
  [ "S3", 2 ], [ "S4", 5 ], [ "S5", 1 ] ]
gap> Collected(List(GcsHNPtrue,x->StructureDescription(Representative(x))));
[ [ "((((C4 x C4) : C2) : C2) : C3) : C2", 1 ], 
  [ "(((C2 x C2 x C2 x C2) : C2) : C2) : C2", 1 ], 
  [ "(((C4 x C4) : C2) : C2) : C3", 1 ], 
  [ "((C2 x C2 x C2 x C2) : C2) : C2", 1 ], 
  [ "((C2 x C2 x C2 x C2) : C3) : C2", 1 ], 
  [ "((C2 x C2 x C2) : C4) : C2", 1 ], [ "((C8 : C2) : C2) : C2", 1 ], 
  [ "(C2 x C2 x C2 x C2) : A5", 1 ], [ "(C2 x C2 x C2 x C2) : C2", 2 ], 
  [ "(C2 x C2 x C2 x C2) : C3", 1 ], [ "(C2 x C2 x C2 x C2) : S5", 1 ], 
  [ "(C4 x C2) : C2", 1 ], [ "(C4 x C4) : C2", 1 ], [ "(C4 x C4) : C3", 1 ], 
  [ "A4", 1 ], [ "A5", 1 ], [ "C2 x C2", 1 ], [ "C2 x C2 x C2", 1 ], 
  [ "C2 x C2 x C2 x C2", 1 ], [ "C2 x D8", 1 ], [ "C4 x C4", 1 ], 
  [ "D8", 1 ], [ "S4", 1 ], [ "S5", 1 ] ]
gap> GcsHNPtrueMin:=MinConjugacyClassesSubgroups(GcsHNPtrue);;
gap> Collected(List(GcsHNPtrueMin,x->StructureDescription(Representative(x))));
[ [ "C2 x C2", 1 ], [ "C4 x C4", 1 ], [ "D8", 1 ] ]
gap> IsInvariantUnderAutG(GcsHNPtrueMin);
false
gap> GcsHNPfalseC2xC2:=Filtered(GcsHNPfalse, # G(4,2)=C2xC2
> x->IdSmallGroup(Representative(x))=[4,2]);;
gap> Length(GcsHNPfalseC2xC2);
9
gap> GcsHNPtrueC2xC2:=Filtered(GcsHNPtrue, # G(4,2)=C2xC2
> x->IdSmallGroup(Representative(x))=[4,2]);;
gap> Length(GcsHNPtrueC2xC2);
1
gap> IsInvariantUnderAutG(GcsHNPtrueC2xC2);
true
gap> ChG:=CharacteristicSubgroups(G);
[ Group(()), Group([ (1,2)(3,16)(4,7)(5,6)(8,11)(9,10)(12,15)(13,14), (1,8)
  (2,11)(3,10)(4,13)(5,12)(6,15)(7,14)(9,16), (1,9)(2,10)(3,11)(4,12)(5,13)
  (6,14)(7,15)(8,16), (1,4)(2,7)(3,6)(5,16)(8,13)(9,12)(10,15)(11,14) ]), 
  <permutation group of size 960 with 7 generators>, t16n1329 ]
gap> List(last,Order);
[ 1, 16, 960, 1920 ]
gap> Collected(List(GcsHNPfalseC2xC2,x->List(ChG,y->Order(Intersection(Representative(x),y)))));
[ [ [ 1, 1, 2, 4 ], 1 ], [ [ 1, 1, 4, 4 ], 2 ], [ [ 1, 2, 2, 4 ], 1 ], 
  [ [ 1, 2, 4, 4 ], 2 ], [ [ 1, 4, 4, 4 ], 3 ] ]
gap> Collected(List(GcsHNPtrueC2xC2,x->List(ChG,y->Order(Intersection(Representative(x),y)))));
[ [ [ 1, 1, 4, 4 ], 1 ] ]
gap> Collected(List(GcsHNPfalseC2xC2,x->Length(Elements(x))));
# The number of conjuugates of each V4 
[ [ 5, 1 ], [ 10, 1 ], [ 20, 3 ], [ 30, 1 ], [ 60, 2 ], [ 120, 1 ] ]
gap> Collected(List(GcsHNPtrueC2xC2,x->Length(Elements(x))));
# The number of conjuugates of each V4 
[ [ 40, 1 ] ]
gap> GcsHNPfalseD4:=Filtered(GcsHNPfalse, # G(8,3)=D4
> x->IdSmallGroup(Representative(x))=[8,3]);;
gap> Length(GcsHNPfalseD4);
7
gap> GcsHNPtrueD4:=Filtered(GcsHNPtrue, # G(8,3)=D4
> x->IdSmallGroup(Representative(x))=[8,3]);;
gap> Length(GcsHNPtrueD4);
1
gap> IsInvariantUnderAutG(GcsHNPtrueD4);
false
gap> Collected(List(List(GcsHNPfalseD4,Representative),
> x->SortedList(List(Orbits(x),Length))));
[ [ [ 2, 4, 8 ], 1 ], [ [ 4, 4, 4, 4 ], 2 ], [ [ 4, 4, 8 ], 4 ] ]
gap> Collected(List(List(GcsHNPtrueD4,Representative),
> x->SortedList(List(Orbits(x),Length))));
[ [ [ 2, 2, 4, 4, 4 ], 1 ] ]
gap> GcsHNPfalseC4xC4:=Filtered(GcsHNPfalse, # G(16,2)=C4xC4
> x->IdSmallGroup(Representative(x))=[16,2]);;
gap> Length(GcsHNPfalseC4xC4);
1
gap> GcsHNPtrueC4xC4:=Filtered(GcsHNPtrue, # G(16,2)=C4xC4
> x->IdSmallGroup(Representative(x))=[16,2]);;
gap> Length(GcsHNPtrueC4xC4);
1
gap> IsInvariantUnderAutG(GcsHNPtrueC4xC4);
true
gap> Collected(List(GcsHNPfalseC4xC4,x->List(ChG,y->Order(Intersection(Representative(x),y)))));
[ [ [ 1, 4, 16, 16 ], 1 ] ]
gap> Collected(List(GcsHNPtrueC4xC4,x->List(ChG,y->Order(Intersection(Representative(x),y)))));
[ [ [ 1, 4, 16, 16 ], 1 ] ]
gap> Collected(List(GcsHNPfalseC4xC4,x->Length(Elements(x))));
# The number of conjugates of each C4xC4
[ [ 5, 1 ] ]
gap> Collected(List(GcsHNPtrueC4xC4,x->Length(Elements(x))));
# The number of conjugates of each C4xC4
[ [ 10, 1 ] ]
\end{verbatim}
}~\\\vspace*{-4mm}

(5) $16T1654\simeq (C_2)^4\rtimes A_6$. 
{\small 
\begin{verbatim}
gap> Read("HNP.gap");
gap> Read("FlabbyResolutionFromBase.gap");
gap> Filtered(H1(FlabbyResolutionNorm1TorusJ(16,1654).actionF),x->x>1); # H^1(G,[J_{G/H}]^{fl})
[ 2 ]
gap> G:=TransitiveGroup(16,1654); # G=16T1654
t16n1654
gap> H:=Stabilizer(G,1);
Group([ (2,10,11)(4,12,13)(6,14,15)(8,9,16), (3,12,16,15)(4,10,8,5)(6,7,9,11)
(13,14) ])
gap> FirstObstructionN(G,H).ker;
[ [  ], [ [  ], [  ] ] ]
gap> FirstObstructionDnr(G,H).Dnr;
[ [  ], [ [  ], [  ] ] ]
gap> GroupCohomology(G,3); # H^3(G,Z)=M(G): Schur multiplier of G
[ 2, 4, 3 ]
gap> cGs:=MinimalStemExtensions(G);;
gap> for cG in cGs do
> bG:=cG.MinimalStemExtension;
> bH:=PreImage(cG.epi,H);
> Print(FirstObstructionN(bG,bH).ker[1]);
> Print(FirstObstructionDnr(bG,bH).Dnr[1]);
> Print("\n");
> od;
[  ][  ]
[ 2 ][  ]
[  ][  ]
[  ][  ]
gap> cG:=cGs[2];; # there exists i=2 such that Obs(K/k)=Obs_1(L^-_i/K/k)
gap> bG:=cG.MinimalStemExtension;
Group([ (1,3,2)(4,10,7)(5,9,6)(8,12,11), (2,7,4)(5,11,8), (2,5)(3,6) ])
gap> bH:=PreImage(cG.epi,H);
Group([ (1,4,7)(8,11,9), (1,4,3,2)(5,9,8,6)(7,10)(11,12), (1,9)(2,5)(3,6)(4,8)
(7,11)(10,12) ])
gap> KerResH3Z(bG,bH:iterator); # KerResH3Z=0 => Obs(K/k)=Obs_1(L^-_2/K/k)
[ [  ], [ [ 2, 6 ], [  ] ] ]
gap> HNPtruefalsefn:=x->FirstObstructionDr(bG,PreImage(cG.epi,x),bH).Dr[1]=[2];
function( x ) ... end
gap> Gcs:=ConjugacyClassesSubgroups(G);;
gap> Length(Gcs);
139
gap> GcsH:=ConjugacyClassesSubgroupsNGHOrbitRep(Gcs,H);;
gap> GcsHNPtf:=List(GcsH,x->List(x,HNPtruefalsefn));;
gap> Collected(List(GcsHNPtf,Set));
[ [ [ true ], 30 ], [ [ false ], 109 ] ]
gap> GcsHNPfalse:=List(Filtered([1..Length(Gcs)],
> x->false in GcsHNPtf[x]),y->Gcs[y]);;
gap> Length(GcsHNPfalse);
109
gap> GcsHNPtrue:=List(Filtered([1..Length(Gcs)],
> x->true in GcsHNPtf[x]),y->Gcs[y]);;
gap> Length(GcsHNPtrue);
30
gap> Collected(List(GcsHNPfalse,x->StructureDescription(Representative(x))));
[ [ "(((C2 x C2 x C2 x C2) : C2) : C2) : C3", 1 ], 
  [ "((C2 x C2 x C2 x C2) : C2) : C2", 1 ], 
  [ "((C2 x C2 x C2 x C2) : C3) : C2", 1 ], 
  [ "((C2 x C2 x C2 x C2) : C5) : C2", 1 ], 
  [ "((C2 x C2 x C2) : (C2 x C2)) : C3", 1 ], 
  [ "((C2 x C2 x C2) : C4) : C2", 1 ], [ "((C4 x C4) : C3) : C2", 1 ], 
  [ "((C8 : C2) : C2) : C2", 1 ], [ "(A4 x A4) : C2", 1 ], 
  [ "(A4 x A4) : C4", 1 ], [ "(C2 x C2 x C2 x C2) : A5", 1 ], 
  [ "(C2 x C2 x C2 x C2) : C2", 1 ], [ "(C2 x C2 x C2 x C2) : C3", 1 ], 
  [ "(C2 x C2 x C2 x C2) : C5", 1 ], [ "(C2 x C2 x C2) : (C2 x C2)", 1 ], 
  [ "(C2 x C2 x C2) : C4", 3 ], [ "(C2 x Q8) : C2", 1 ], 
  [ "(C2 x Q8) : C4", 1 ], [ "(C2 x S4) : C2", 1 ], [ "(C3 x A4) : C2", 1 ], 
  [ "(C3 x C3) : C2", 1 ], [ "(C3 x C3) : C4", 1 ], [ "(C4 x C2) : C2", 6 ], 
  [ "(C4 x C4) : C2", 2 ], [ "(C4 x C4) : C3", 1 ], [ "(C6 x C2) : C2", 1 ], 
  [ "(C8 : C2) : C2", 1 ], [ "1", 1 ], [ "A4", 7 ], [ "A4 : C4", 1 ], 
  [ "A4 x A4", 1 ], [ "A5", 3 ], [ "A6", 1 ], [ "C2", 2 ], 
  [ "C2 . (((C2 x C2 x C2 x C2) : C3) : C2) = (((C2 x C2 x C2) : (C2 x C2)) : C3) . C2", 1 ], 
  [ "C2 . S4 = SL(2,3) . C2", 1 ], [ "C2 x A4", 2 ], 
  [ "C2 x C2", 7 ], [ "C2 x C2 x A4", 1 ], [ "C2 x C2 x C2", 3 ], 
  [ "C2 x C2 x C2 x C2", 1 ], [ "C2 x D8", 3 ], [ "C2 x Q8", 1 ], 
  [ "C2 x S4", 1 ], [ "C3", 2 ], [ "C3 : C4", 1 ], [ "C3 x A4", 1 ], 
  [ "C3 x C3", 1 ], [ "C4", 3 ], [ "C4 x C2", 4 ], [ "C4 x C4", 1 ], 
  [ "C5", 1 ], [ "C6", 1 ], [ "C6 x C2", 1 ], [ "C8", 1 ], [ "C8 : C2", 1 ], 
  [ "D10", 1 ], [ "D12", 1 ], [ "D8", 4 ], [ "Q16", 1 ], [ "Q8", 2 ], 
  [ "QD16", 1 ], [ "S3", 2 ], [ "S4", 6 ], [ "SL(2,3)", 1 ] ]
gap> Collected(List(GcsHNPtrue,x->StructureDescription(Representative(x))));
[ [ "((((C2 x C2 x C2 x C2) : C2) : C2) : C3) : C2", 1 ], 
  [ "((((C2 x C2 x C2) : (C2 x C2)) : C3) : C2) : C2", 1 ], 
  [ "(((C2 x C2 x C2 x C2) : C2) : C2) : C3", 1 ], 
  [ "(((C2 x C2 x C2) : (C2 x C2)) : C3) : C2", 1 ], 
  [ "(((C4 x C4) : C2) : C2) : C2", 1 ], 
  [ "((C2 x C2 x C2 x C2) : C2) : C2", 2 ], 
  [ "((C2 x C2 x C2 x C2) : C3) : C2", 1 ], 
  [ "((C2 x C2 x C2) : C4) : C2", 1 ], [ "(C2 x C2 x C2 x C2) : A5", 1 ], 
  [ "(C2 x C2 x C2 x C2) : A6", 1 ], [ "(C2 x C2 x C2 x C2) : C2", 2 ], 
  [ "(C2 x C2 x C2 x C2) : C3", 1 ], [ "(C4 x C2) : C2", 1 ], 
  [ "(C4 x C4) : C2", 1 ], [ "(C4 x C4) : C3", 1 ], [ "A4", 1 ], [ "A5", 2 ], 
  [ "A6", 1 ], [ "C2 x C2", 1 ], [ "C2 x C2 x C2", 1 ], 
  [ "C2 x C2 x C2 x C2", 1 ], [ "C2 x D8", 1 ], [ "C2 x S4", 1 ], 
  [ "C4 x C4", 1 ], [ "D8", 1 ], [ "S4", 2 ] ]
gap> GcsHNPtrueMin:=MinConjugacyClassesSubgroups(GcsHNPtrue);;
gap> Collected(List(GcsHNPtrueMin,x->StructureDescription(Representative(x))));
[ [ "C2 x C2", 1 ], [ "C4 x C4", 1 ], [ "D8", 1 ] ]
gap> IsInvariantUnderAutG(GcsHNPtrueMin);
false
gap> GcsHNPfalseC2xC2:=Filtered(GcsHNPfalse, # G(4,2)=C2xC2
> x->Order(Representative(x))=4 and IdSmallGroup(Representative(x))=[4,2]);;
gap> Length(GcsHNPfalseC2xC2);
7
gap> GcsHNPtrueC2xC2:=Filtered(GcsHNPtrue, # G(4,2)=C2xC2
> x->Order(Representative(x))=4 and IdSmallGroup(Representative(x))=[4,2]);;
gap> Length(GcsHNPtrueC2xC2);
1
gap> IsInvariantUnderAutG(GcsHNPtrueC2xC2);
true
gap> NS16G:=Normalizer(SymmetricGroup(16),G);
Group([ (1,16,5,15,13,12,9,3)(2,4,10,7,14,8,6,11), (1,8,4,5,12,16)
(2,14,15,6,10,11)(3,7)(9,13), (2,9)(5,14)(7,12)(11,16) ])
gap> Collected(List(GcsHNPfalseC2xC2,
> x->IdSmallGroup(Normalizer(NS16G,Representative(x)))));
[ [ [ 64, 202 ], 1 ], [ [ 96, 226 ], 1 ], [ [ 128, 1755 ], 1 ], 
  [ [ 192, 1538 ], 2 ], [ [ 576, 8653 ], 1 ], [ [ 768, 1090134 ], 1 ] ]
gap> Collected(List(GcsHNPtrueC2xC2,
> x->IdSmallGroup(Normalizer(NS16G,Representative(x)))));
[ [ [ 96, 229 ], 1 ] ]
gap> GcsHNPfalseD4:=Filtered(GcsHNPfalse,
> x->Order(Representative(x))=8 and IdSmallGroup(Representative(x))=[8,3]);;
gap> Length(GcsHNPfalseD4);
4
gap> GcsHNPtrueD4:=Filtered(GcsHNPtrue,
> x->Order(Representative(x))=8 and IdSmallGroup(Representative(x))=[8,3]);;
gap> Length(GcsHNPtrueD4);
1
gap> IsInvariantUnderAutG(GcsHNPtrueD4);
false
gap> Collected(List(List(GcsHNPfalseD4,Representative),
> x->SortedList(List(Orbits(x),Length))));
[ [ [ 2, 4, 8 ], 1 ], [ [ 4, 4, 4, 4 ], 1 ], [ [ 4, 4, 8 ], 2 ] ]
gap> Collected(List(List(GcsHNPtrueD4,Representative),
> x->SortedList(List(Orbits(x),Length))));
[ [ [ 2, 2, 4, 4, 4 ], 1 ] ]
gap> GcsHNPfalseC4xC4:=Filtered(GcsHNPfalse, # G(16,2)=C4xC4
> x->Order(Representative(x))=16 and IdSmallGroup(Representative(x))=[16,2]);;
gap> Length(GcsHNPfalseC4xC4);
1
gap> GcsHNPtrueC4xC4:=Filtered(GcsHNPtrue, # G(16,2)=C4xC4
> x->Order(Representative(x))=16 and IdSmallGroup(Representative(x))=[16,2]);;
gap> Length(GcsHNPtrueC4xC4);
1
gap> IsInvariantUnderAutG(GcsHNPtrueC4xC4);
true
gap> Collected(List(GcsHNPfalseC4xC4,x->Length(Elements(x))));
# The number of conjugates of each C4xC4
[ [ 15, 1 ] ]
gap> Collected(List(GcsHNPtrueC4xC4,x->Length(Elements(x))));
# The number of conjugates of each C4xC4
[ [ 30, 1 ] ]
\end{verbatim}
}~\\\vspace*{-4mm}

(6) $16T1753\simeq (C_2)^4\rtimes S_6$. 
{\small 
\begin{verbatim}
gap> Read("HNP.gap");
gap> Read("FlabbyResolutionFromBase.gap");
gap> Filtered(H1(FlabbyResolutionNorm1TorusJ(16,1753).actionF),x->x>1); # H^1(G,[J_{G/H}]^{fl})
[ 2 ]
gap> G:=TransitiveGroup(16,1753);
t16n1753
gap> H:=Stabilizer(G,1);
Group([ (2,5,3,4)(6,7)(8,10,15,12)(9,11,14,13), (3,12,16,15)(4,9,8,6)
(5,7,10,11)(13,14) ])
gap> FirstObstructionN(G,H).ker;
[ [  ], [ [ 2 ], [  ] ] ]
gap> FirstObstructionDnr(G,H).Dnr;
[ [  ], [ [ 2 ], [  ] ] ]
gap> GroupCohomology(G,3); # H^3(G,Z)=M(G): Schur multiplier of G
[ 2, 2 ]
gap> cGs:=MinimalStemExtensions(G);;
gap> for cG in cGs do
> bG:=cG.MinimalStemExtension;
> bH:=PreImage(cG.epi,H);
> Print(FirstObstructionN(bG,bH).ker[1]);
> Print(FirstObstructionDnr(bG,bH).Dnr[1]);
> Print("\n");
> od;
[  ][  ]
[ 2 ][  ]
[  ][  ]
gap> cG:=cGs[2];; # there exists i=2 such that Obs(K/k)=Obs_1(L^-_i/K/k)
gap> bG:=cG.MinimalStemExtension;
<permutation group of size 23040 with 6 generators>
gap> bH:=PreImage(cG.epi,H);
Group([ (1,5,12,9)(3,14,8,6)(10,20,17,16)(13,19,18,15), (1,2,5,6)(3,9,12,8)
(4,14)(7,13,16,15)(10,18,19,17)(11,20), (1,15)(2,7)(3,10)(4,11)(5,13)(6,16)
(8,17)(9,18)(12,19)(14,20) ])
gap> KerResH3Z(bG,bH:iterator); # KerResH3Z=0 => Obs(K/k)=Obs_1(L^-_2/K/k)
[ [  ], [ [ 2, 2 ], [  ] ] ]
gap> HNPtruefalsefn:=x->FirstObstructionDr(bG,PreImage(cG.epi,x),bH).Dr[1]=[2];
function( x ) ... end
gap> Gcs:=ConjugacyClassesSubgroups(G);;
gap> Length(Gcs);
498
gap> GcsH:=ConjugacyClassesSubgroupsNGHOrbitRep(Gcs,H);;
gap> GcsHNPtf:=List(GcsH,x->List(x,HNPtruefalsefn));;
gap> Collected(List(GcsHNPtf,Set));
[ [ [ true ], 143 ], [ [ false ], 355 ] ]
gap> GcsHNPfalse:=List(Filtered([1..Length(Gcs)],
> x->false in GcsHNPtf[x]),y->Gcs[y]);;
gap> Length(GcsHNPfalse);
355
gap> GcsHNPtrue:=List(Filtered([1..Length(Gcs)],
> x->true in GcsHNPtf[x]),y->Gcs[y]);;
gap> Length(GcsHNPtrue);
143
gap> Collected(List(GcsHNPfalse,x->StructureDescription(Representative(x))));
[ [ "(((((C2 x C2 x C2 x C2) : C3) : C2) : C3) : C2) : C2", 1 ], 
  [ "((((C2 x C2 x C2) : (C2 x C2)) : C3) : C2) : C2", 1 ], 
  [ "((((C4 x C2 x C2) : C2) : C2) : C3) : C2", 1 ], 
  [ "(((C2 x C2 x C2 x C2) : C2) : C2) : C2", 1 ], 
  [ "(((C2 x C2 x C2 x C2) : C2) : C2) : C3", 1 ], 
  [ "(((C2 x C2 x C2 x C2) : C2) : C3) : C2", 1 ], 
  [ "(((C2 x C2 x C2 x C2) : C3) : C2) : C3", 1 ], 
  [ "(((C2 x C2 x C2) : (C2 x C2)) : C3) : C2", 2 ], 
  [ "(((C2 x Q8) : C2) : C2) : C2", 1 ], 
  [ "(((C4 x C2 x C2) : C2) : C2) : C2", 1 ], 
  [ "(((C4 x C2 x C2) : C2) : C2) : C3", 1 ], 
  [ "(((C4 x C4) : C2) : C3) : C2", 1 ], [ "((A4 x A4) : C2) : C2", 1 ], 
  [ "((C2 x C2 x C2 x C2) : C2) : C2", 3 ], 
  [ "((C2 x C2 x C2 x C2) : C2) : C3", 1 ], 
  [ "((C2 x C2 x C2 x C2) : C3) : C2", 2 ], 
  [ "((C2 x C2 x C2 x C2) : C5) : C2", 1 ], 
  [ "((C2 x C2 x C2 x C2) : C5) : C4", 1 ], 
  [ "((C2 x C2 x C2) : (C2 x C2)) : C2", 2 ], 
  [ "((C2 x C2 x C2) : (C2 x C2)) : C3", 1 ], 
  [ "((C2 x C2 x C2) : C4) : C2", 4 ], [ "((C4 x C2 x C2) : C2) : C2", 2 ], 
  [ "((C4 x C2) : C2) : C4", 2 ], [ "((C4 x C4) : C2) : C2", 5 ], 
  [ "((C4 x C4) : C2) : C3", 1 ], [ "((C4 x C4) : C3) : C2", 2 ], 
  [ "((C8 : C2) : C2) : C2", 3 ], [ "(A4 x A4) : C2", 1 ], 
  [ "(A4 x A4) : C4", 1 ], [ "(C2 x C2 x C2 x C2) : A5", 1 ], 
  [ "(C2 x C2 x C2 x C2) : C2", 7 ], [ "(C2 x C2 x C2 x C2) : C3", 1 ], 
  [ "(C2 x C2 x C2 x C2) : C5", 1 ], [ "(C2 x C2 x C2 x C2) : S5", 1 ], 
  [ "(C2 x C2 x C2) : (C2 x C2)", 4 ], [ "(C2 x C2 x C2) : C4", 10 ], 
  [ "(C2 x Q8) : C2", 3 ], [ "(C2 x Q8) : C4", 2 ], [ "(C2 x S4) : C2", 3 ], 
  [ "(C3 x A4) : C2", 1 ], [ "(C3 x C3) : C2", 1 ], [ "(C3 x C3) : C4", 1 ], 
  [ "(C4 x C2 x C2) : C2", 5 ], [ "(C4 x C2) : C2", 21 ], 
  [ "(C4 x C4) : C2", 10 ], [ "(C4 x C4) : C3", 1 ], [ "(C4 x D8) : C2", 1 ], 
  [ "(C6 x C2) : C2", 2 ], [ "(C8 : C2) : C2", 3 ], [ "(D8 x D8) : C2", 1 ], 
  [ "(S3 x S3) : C2", 1 ], [ "1", 1 ], [ "A4", 6 ], [ "A4 : C4", 1 ], 
  [ "A4 x A4", 1 ], [ "A4 x S3", 1 ], [ "A5", 3 ], [ "A6", 1 ], [ "C12", 1 ], [ "C2", 5 ], 
  [ "C2 . (((C2 x C2 x C2 x C2) : C3) : C2) = (((C2 x C2 x C2) : (C2 x C2)) : C3) . C2", 1 ], 
  [ "C2 . S4 = SL(2,3) . C2", 1 ], 
  [ "C2 x ((C4 x C2) : C2)", 1 ], [ "C2 x A4", 6 ], [ "C2 x C2", 18 ], 
  [ "C2 x C2 x A4", 2 ], [ "C2 x C2 x C2", 15 ], [ "C2 x C2 x C2 x C2", 3 ], 
  [ "C2 x C2 x D8", 2 ], [ "C2 x C2 x S3", 2 ], [ "C2 x C2 x S4", 2 ], 
  [ "C2 x D8", 22 ], [ "C2 x Q8", 1 ], [ "C2 x S4", 9 ], [ "C3", 2 ], 
  [ "C3 : C4", 1 ], [ "C3 x A4", 1 ], [ "C3 x C3", 1 ], [ "C3 x D8", 1 ], 
  [ "C3 x S3", 2 ], [ "C3 x S4", 1 ], [ "C4", 6 ], [ "C4 : C4", 3 ], 
  [ "C4 x A4", 1 ], [ "C4 x C2", 13 ], [ "C4 x C2 x C2", 3 ], 
  [ "C4 x C4", 2 ], [ "C4 x D8", 1 ], [ "C4 x S3", 1 ], [ "C4 x S4", 1 ], 
  [ "C5", 1 ], [ "C5 : C4", 1 ], [ "C6", 3 ], [ "C6 x C2", 2 ], [ "C8", 2 ], 
  [ "C8 : (C2 x C2)", 2 ], [ "C8 : C2", 3 ], [ "D10", 1 ], [ "D12", 6 ], 
  [ "D16", 2 ], [ "D24", 1 ], [ "D8", 20 ], [ "D8 x A4", 1 ], 
  [ "D8 x D8", 1 ], [ "D8 x S3", 1 ], [ "GL(2,3)", 1 ], [ "Q16", 2 ], 
  [ "Q8", 3 ], [ "QD16", 4 ], [ "S3", 5 ], [ "S3 x S3", 2 ], [ "S4", 11 ], 
  [ "S4 x A4", 1 ], [ "S4 x D8", 1 ], [ "S4 x S3", 1 ], [ "S4 x S4", 1 ], 
  [ "S5", 2 ], [ "S6", 1 ], [ "SL(2,3)", 1 ] ]
gap> Collected(List(GcsHNPtrue,x->StructureDescription(Representative(x))));
[ [ "(((((C2 x C2 x C2 x C2) : C2) : C2) : C3) : C2) : C2", 1 ], 
  [ "(((((C2 x C2 x C2) : (C2 x C2)) : C3) : C2) : C2) : C2", 1 ], 
  [ "((((C2 x C2 x C2 x C2) : C2) : C2) : C3) : C2", 2 ], 
  [ "((((C2 x C2 x C2) : (C2 x C2)) : C3) : C2) : C2", 1 ], 
  [ "((((C4 x C2 x C2) : C2) : C2) : C3) : C2", 1 ], 
  [ "((((C4 x C4) : C2) : C2) : C2) : C3", 1 ], 
  [ "(((C2 x C2 x C2 x C2) : C2) : C2) : C2", 2 ], 
  [ "(((C2 x C2 x C2 x C2) : C2) : C2) : C3", 1 ], 
  [ "(((C2 x C2 x C2) : (C2 x C2)) : C3) : C2", 1 ], 
  [ "(((C2 x C2 x C2) : C4) : C2) : C2", 1 ], 
  [ "(((C4 x C2 x C2) : C2) : C2) : C2", 1 ], 
  [ "(((C4 x C4) : C2) : C2) : C2", 2 ], 
  [ "((C2 x ((C2 x C2 x C2) : (C2 x C2))) : C2) : C3", 1 ], 
  [ "((C2 x C2 x C2 x C2) : C2) : C2", 6 ], 
  [ "((C2 x C2 x C2 x C2) : C3) : C2", 2 ], 
  [ "((C2 x C2 x C2) : (C2 x C2)) : C2", 1 ], 
  [ "((C2 x C2 x C2) : C4) : C2", 1 ], [ "((C4 x C2 x C2) : C2) : C2", 2 ], 
  [ "((C4 x C4) : C2) : C2", 1 ], [ "((C4 x C4) : C2) : C3", 1 ], 
  [ "((C8 : C2) : C2) : C2", 1 ], [ "((D8 x D8) : C2) : C2", 1 ], 
  [ "(C2 x (((C2 x C2 x C2) : (C2 x C2)) : C3)) : C2", 1 ], 
  [ "(C2 x ((C2 x C2 x C2 x C2) : C2)) : C2", 1 ], 
  [ "(C2 x ((C2 x C2 x C2) : (C2 x C2))) : C2", 1 ], 
  [ "(C2 x ((C2 x C2 x C2) : C4)) : C2", 1 ], 
  [ "(C2 x ((C4 x C2) : C2)) : C2", 1 ], [ "(C2 x C2 x C2 x C2) : A5", 1 ], 
  [ "(C2 x C2 x C2 x C2) : A6", 1 ], [ "(C2 x C2 x C2 x C2) : C2", 8 ], 
  [ "(C2 x C2 x C2 x C2) : C3", 1 ], [ "(C2 x C2 x C2 x C2) : S5", 1 ], 
  [ "(C2 x C2 x C2 x C2) : S6", 1 ], [ "(C2 x Q8) : C2", 1 ], 
  [ "(C2 x SL(2,3)) : C2", 1 ], [ "(C4 x C2 x C2) : C2", 4 ], 
  [ "(C4 x C2) : C2", 5 ], [ "(C4 x C4) : C2", 2 ], [ "(C4 x C4) : C3", 1 ], 
  [ "(C8 x C2) : C2", 2 ], [ "A4", 1 ], [ "A5", 1 ], [ "A6", 1 ], 
  [ "C2 x ((((C2 x C2 x C2) : (C2 x C2)) : C3) : C2)", 1 ], 
  [ "C2 x (((C2 x C2 x C2 x C2) : C3) : C2)", 1 ], 
  [ "C2 x (((C2 x C2 x C2) : (C2 x C2)) : C2)", 1 ], 
  [ "C2 x (((C2 x C2 x C2) : (C2 x C2)) : C3)", 1 ], 
  [ "C2 x (((C8 : C2) : C2) : C2)", 1 ], 
  [ "C2 x ((C2 x C2 x C2 x C2) : C2)", 2 ], 
  [ "C2 x ((C2 x C2 x C2 x C2) : C3)", 1 ], 
  [ "C2 x ((C2 x C2 x C2) : (C2 x C2))", 1 ], 
  [ "C2 x ((C2 x C2 x C2) : C4)", 3 ], [ "C2 x ((C4 x C2) : C2)", 6 ], 
  [ "C2 x ((C8 : C2) : C2)", 1 ], [ "C2 x (C8 : C2)", 1 ], [ "C2 x A4", 3 ], 
  [ "C2 x C2", 2 ], [ "C2 x C2 x A4", 1 ], [ "C2 x C2 x C2", 8 ], 
  [ "C2 x C2 x C2 x C2", 5 ], [ "C2 x C2 x C2 x C2 x C2", 1 ], 
  [ "C2 x C2 x D8", 3 ], [ "C2 x C2 x S4", 1 ], [ "C2 x D8", 7 ], 
  [ "C2 x Q8", 1 ], [ "C2 x S4", 5 ], [ "C2 x SL(2,3)", 1 ], [ "C4 x C2", 2 ],
  [ "C4 x C2 x C2", 3 ], [ "C4 x C4", 1 ], [ "C4 x D8", 1 ], 
  [ "C8 : (C2 x C2)", 1 ], [ "C8 x C2", 1 ], [ "D8", 2 ], [ "S4", 3 ], 
  [ "S5", 1 ], [ "S6", 1 ] ]
gap> GcsHNPtrueMin:=MinConjugacyClassesSubgroups(GcsHNPtrue);;
gap> Collected(List(GcsHNPtrueMin,x->StructureDescription(Representative(x))));
[ [ "(C4 x C2) : C2", 1 ], [ "C2 x C2", 2 ], [ "C2 x Q8", 1 ], 
  [ "C4 x C2", 1 ], [ "C4 x C4", 1 ], [ "C8 x C2", 1 ], [ "D8", 2 ] ]
gap> Collected(List(GcsHNPtrueMin,x->IdSmallGroup(Representative(x))));
[ [ [ 4, 2 ], 2 ], [ [ 8, 2 ], 1 ], [ [ 8, 3 ], 2 ], [ [ 16, 2 ], 1 ], 
  [ [ 16, 3 ], 1 ], [ [ 16, 5 ], 1 ], [ [ 16, 12 ], 1 ] ]
gap> IsInvariantUnderAutG(GcsHNPtrueMin);
false
gap> GcsHNPfalseC2xC2:=Filtered(GcsHNPfalse, # G(4,2)=C2xC2
> x->Order(Representative(x))=4 and IdSmallGroup(Representative(x))=[4,2]);;
gap> Length(GcsHNPfalseC2xC2);
18
gap> GcsHNPtrueC2xC2:=Filtered(GcsHNPtrue, # G(4,2)=C2xC2
> x->Order(Representative(x))=4 and IdSmallGroup(Representative(x))=[4,2]);;
gap> Length(GcsHNPtrueC2xC2);
2
gap> IsInvariantUnderAutG(GcsHNPtrueC2xC2);
false
gap> Collected(List(List(GcsHNPfalseC2xC2,Representative),
> x->SortedList(List(Orbits(x),Length))));
[ [ [ 2, 2, 2, 2, 2, 2, 2, 2 ], 1 ], [ [ 2, 2, 2, 2, 2, 2, 4 ], 2 ], 
  [ [ 2, 2, 2, 2, 4 ], 1 ], [ [ 2, 2, 2, 2, 4, 4 ], 5 ], 
  [ [ 2, 2, 2, 4, 4 ], 2 ], [ [ 2, 2, 4, 4 ], 1 ], [ [ 2, 2, 4, 4, 4 ], 1 ], 
  [ [ 4, 4, 4 ], 1 ], [ [ 4, 4, 4, 4 ], 4 ] ]
gap> Collected(List(List(GcsHNPtrueC2xC2,Representative),
> x->SortedList(List(Orbits(x),Length))));
[ [ [ 2, 2, 2, 2, 2, 2, 2, 2 ], 1 ], [ [ 2, 2, 2, 2, 2, 2, 4 ], 1 ] ]
gap> NS16G:=Normalizer(SymmetricGroup(16),G);
Group([ (1,10,11,6,9,2,3,14)(4,13,8,15,12,5,16,7), (1,12,6,4,7,8)
(2,3,15,9,13,10)(5,11,14) ])
gap> Collected(List(List(GcsHNPfalseC2xC2,Representative),
> x->[SortedList(List(Orbits(x),Length)),Order(Normalizer(NS16G,x))]));
[ [ [ [ 2, 2, 2, 2, 2, 2, 2, 2 ], 768 ], 1 ], 
  [ [ [ 2, 2, 2, 2, 2, 2, 4 ], 32 ], 1 ], 
  [ [ [ 2, 2, 2, 2, 2, 2, 4 ], 192 ], 1 ], [ [ [ 2, 2, 2, 2, 4 ], 64 ], 1 ], 
  [ [ [ 2, 2, 2, 2, 4, 4 ], 32 ], 1 ], [ [ [ 2, 2, 2, 2, 4, 4 ], 64 ], 2 ], 
  [ [ [ 2, 2, 2, 2, 4, 4 ], 128 ], 2 ], [ [ [ 2, 2, 2, 4, 4 ], 32 ], 1 ], 
  [ [ [ 2, 2, 2, 4, 4 ], 96 ], 1 ], [ [ [ 2, 2, 4, 4 ], 64 ], 1 ], 
  [ [ [ 2, 2, 4, 4, 4 ], 64 ], 1 ], [ [ [ 4, 4, 4 ], 192 ], 1 ], 
  [ [ [ 4, 4, 4, 4 ], 64 ], 1 ], [ [ [ 4, 4, 4, 4 ], 256 ], 1 ], 
  [ [ [ 4, 4, 4, 4 ], 576 ], 1 ], [ [ [ 4, 4, 4, 4 ], 768 ], 1 ] ]
gap> Collected(List(List(GcsHNPtrueC2xC2,Representative),
> x->[SortedList(List(Orbits(x),Length)),Order(Normalizer(NS16G,x))]));
[ [ [ [ 2, 2, 2, 2, 2, 2, 2, 2 ], 64 ], 1 ], 
  [ [ [ 2, 2, 2, 2, 2, 2, 4 ], 96 ], 1 ] ]
gap> GcsHNPfalseC4xC2:=Filtered(GcsHNPfalse, # G(8,2)=C4xC2
> x->Order(Representative(x))=8 and IdSmallGroup(Representative(x))=[8,2]);;
gap> Length(GcsHNPfalseC4xC2);
13
gap> GcsHNPtrueC4xC2:=Filtered(GcsHNPtrue, # G(8,2)=C4xC2
> x->Order(Representative(x))=8 and IdSmallGroup(Representative(x))=[8,2]);;
gap> Length(GcsHNPtrueC4xC2);
2
gap> IsInvariantUnderAutG(GcsHNPtrueC4xC2);
false
gap> Collected(List(List(GcsHNPfalseC4xC2,Representative),
> x->SortedList(List(Orbits(x),Length))));
[ [ [ 2, 2, 4, 8 ], 2 ], [ [ 2, 4, 8 ], 1 ], [ [ 4, 4, 4, 4 ], 1 ], 
  [ [ 4, 4, 8 ], 4 ], [ [ 8, 8 ], 5 ] ]
gap> Collected(List(List(GcsHNPtrueC4xC2,Representative),
> x->SortedList(List(Orbits(x),Length))));
[ [ [ 2, 2, 4, 4, 4 ], 1 ], [ [ 8, 8 ], 1 ] ]
gap> Collected(List(List(GcsHNPfalseC4xC2,Representative),
> x->[SortedList(List(Orbits(x),Length)),Order(Normalizer(NS16G,x))]));
[ [ [ [ 2, 2, 4, 8 ], 64 ], 2 ], [ [ [ 2, 4, 8 ], 32 ], 1 ], 
  [ [ [ 4, 4, 4, 4 ], 128 ], 1 ], [ [ [ 4, 4, 8 ], 32 ], 1 ], 
  [ [ [ 4, 4, 8 ], 64 ], 1 ], [ [ [ 4, 4, 8 ], 128 ], 2 ], 
  [ [ [ 8, 8 ], 128 ], 3 ], [ [ [ 8, 8 ], 256 ], 2 ] ]
gap> Collected(List(List(GcsHNPtrueC4xC2,Representative),
> x->[SortedList(List(Orbits(x),Length)),Order(Normalizer(NS16G,x))]));
[ [ [ [ 2, 2, 4, 4, 4 ], 32 ], 1 ], [ [ [ 8, 8 ], 32 ], 1 ] ]
gap> GcsHNPfalseD4:=Filtered(GcsHNPfalse, # G(8,3)=D4
> x->Order(Representative(x))=8 and IdSmallGroup(Representative(x))=[8,3]);;
gap> Length(GcsHNPfalseD4);
20
gap> GcsHNPtrueD4:=Filtered(GcsHNPtrue, # G(8,3)=D4
> x->Order(Representative(x))=8 and IdSmallGroup(Representative(x))=[8,3]);;
gap> Length(GcsHNPtrueD4);
2
gap> IsInvariantUnderAutG(GcsHNPtrueD4);
false
gap> Collected(List(List(GcsHNPfalseD4,Representative),
> x->SortedList(List(Orbits(x),Length))));
[ [ [ 2, 2, 4, 8 ], 2 ], [ [ 2, 4, 4, 4 ], 2 ], [ [ 2, 4, 8 ], 2 ], 
  [ [ 4, 4, 4, 4 ], 5 ], [ [ 4, 4, 8 ], 6 ], [ [ 8, 8 ], 3 ] ]
gap> Collected(List(List(GcsHNPtrueD4,Representative),
> x->SortedList(List(Orbits(x),Length))));
[ [ [ 2, 2, 4, 4, 4 ], 2 ] ]
gap> GcsHNPfalseC4xC4:=Filtered(GcsHNPfalse, # G(16,2)=C4xC4
> x->Order(Representative(x))=16 and IdSmallGroup(Representative(x))=[16,2]);;
gap> Length(GcsHNPfalseC4xC4);
2
gap> GcsHNPtrueC4xC4:=Filtered(GcsHNPtrue, # G(16,2)=C4xC4
> x->Order(Representative(x))=16 and IdSmallGroup(Representative(x))=[16,2]);;
gap> Length(GcsHNPtrueC4xC4);
1
gap> IsInvariantUnderAutG(GcsHNPtrueC4xC4);
true
gap> Collected(List(GcsHNPfalseC4xC4,x->Length(Elements(x))));
[ [ 15, 1 ], [ 45, 1 ] ]
gap> Collected(List(GcsHNPtrueC4xC4,x->Length(Elements(x))));
[ [ 30, 1 ] ]
gap> GcsHNPfalseG16_3:=Filtered(GcsHNPfalse, # G(16,3)=(C4xC2):C2
> x->Order(Representative(x))=16 and IdSmallGroup(Representative(x))=[16,3]);;
gap> Length(GcsHNPfalseG16_3);
15
gap> GcsHNPtrueG16_3:=Filtered(GcsHNPtrue, # G(16,3)=(C4xC2):C2
> x->Order(Representative(x))=16 and IdSmallGroup(Representative(x))=[16,3]);;
gap> Length(GcsHNPtrueG16_3);
3
gap> IsInvariantUnderAutG(GcsHNPtrueG16_3);
true
gap> Collected(List(GcsHNPfalseG16_3,
> x->Collected(List(Elements(x),y->Order(Intersection(Representative(x),y))))));
# The number of conjugates of each G16_3
[ [ [ [ 1, 24 ], [ 2, 10 ], [ 4, 10 ], [ 16, 1 ] ], 1 ], 
  [ [ [ 1, 24 ], [ 2, 18 ], [ 4, 2 ], [ 16, 1 ] ], 1 ], 
  [ [ [ 1, 48 ], [ 2, 28 ], [ 4, 11 ], [ 8, 2 ], [ 16, 1 ] ], 1 ], 
  [ [ [ 1, 48 ], [ 2, 28 ], [ 4, 12 ], [ 8, 1 ], [ 16, 1 ] ], 1 ], 
  [ [ [ 1, 48 ], [ 2, 32 ], [ 4, 4 ], [ 8, 5 ], [ 16, 1 ] ], 1 ], 
  [ [ [ 1, 48 ], [ 2, 32 ], [ 4, 7 ], [ 8, 2 ], [ 16, 1 ] ], 2 ], 
  [ [ [ 1, 48 ], [ 2, 36 ], [ 4, 3 ], [ 8, 2 ], [ 16, 1 ] ], 1 ], 
  [ [ [ 1, 48 ], [ 2, 36 ], [ 4, 4 ], [ 8, 1 ], [ 16, 1 ] ], 2 ], 
  [ [ [ 1, 144 ], [ 2, 28 ], [ 4, 6 ], [ 8, 1 ], [ 16, 1 ] ], 2 ], 
  [ [ [ 4, 40 ], [ 8, 4 ], [ 16, 1 ] ], 1 ], 
  [ [ [ 4, 42 ], [ 8, 2 ], [ 16, 1 ] ], 1 ], 
  [ [ [ 4, 83 ], [ 8, 6 ], [ 16, 1 ] ], 1 ] ]
gap> Collected(List(GcsHNPtrueG16_3,
> x->Collected(List(Elements(x),y->Order(Intersection(Representative(x),y)))))); 
# The number of conjugates of each G16_3
[ [ [ [ 1, 160 ], [ 2, 4 ], [ 4, 14 ], [ 8, 1 ], [ 16, 1 ] ], 2 ], 
  [ [ [ 1, 160 ], [ 2, 16 ], [ 4, 2 ], [ 8, 1 ], [ 16, 1 ] ], 1 ] ]
gap> GcsHNPfalseC8xC2:=Filtered(GcsHNPfalse, # G(16,5)=C8xC2
> x->Order(Representative(x))=16 and IdSmallGroup(Representative(x))=[16,5]);;
gap> Length(GcsHNPfalseC8xC2);
0
gap> GcsHNPtrueC8xC2:=Filtered(GcsHNPtrue, # G(16,5)=C8xC2
> x->Order(Representative(x))=16 and IdSmallGroup(Representative(x))=[16,5]);;
gap> Length(GcsHNPtrueC8xC2);
1
gap> GcsHNPfalseC2xQ8:=Filtered(GcsHNPfalse, # G(16,12)=C2xQ8
> x->Order(Representative(x))=16 and IdSmallGroup(Representative(x))=[16,12]);;
gap> Length(GcsHNPfalseC2xQ8);
1
gap> GcsHNPtrueC2xQ8:=Filtered(GcsHNPtrue, # G(16,12)=C2xQ8
> x->Order(Representative(x))=16 and IdSmallGroup(Representative(x))=[16,12]);;
gap> Length(GcsHNPtrueC2xQ8);
1
gap> IsInvariantUnderAutG(GcsHNPtrueC2xQ8);
true
gap> ChG:=CharacteristicSubgroups(G);
[ Group(()), Group([ (1,2)(3,16)(4,7)(5,6)(8,11)(9,10)(12,15)(13,14), (1,6)
  (2,5)(3,4)(7,16)(8,15)(9,14)(10,13)(11,12), (1,10)(2,9)(3,8)(4,15)(5,14)
  (6,13)(7,12)(11,16), (1,3)(2,16)(4,6)(5,7)(8,10)(9,11)(12,14)(13,15) ]), 
  <permutation group of size 5760 with 6 generators>, t16n1753 ]
gap> List(ChG,Order);
[ 1, 16, 5760, 11520 ]
gap> Collected(List(GcsHNPtrueC2xQ8,
> x->List(ChG,y->Order(Intersection(Representative(x),y)))));
[ [ [ 1, 2, 8, 16 ], 1 ] ]
gap> StructureDescription(ChG[2]);
"C2 x C2 x C2 x C2"
\end{verbatim}
}~\\\vspace*{-4mm}
\end{example}

{\it Proof of Corollary \ref{cor1.5}.}
%\begin{proof}
By the definition, we have $0\to \bZ\xrightarrow{\varepsilon^\circ}\bZ[G/H]\to J_{G/H}\to 0$ where 
$J_{G/H}\simeq \widehat{T}={\rm Hom}(T,\bG_m)$. 
Then we get $H^1(G,\bZ[G/H])\to H^1(G,J_{G/H})$ 
$\xrightarrow{\delta}$ $H^2(G,\bZ)\xrightarrow{\varphi} H^2(G,\bZ[G/H])$ 
where $\delta$ is the connecting homomorphism. 
We have $H^2(G,\bZ)\simeq H^1(G,\bQ/\bZ)
={\rm Hom}(G,\bQ/\bZ)\simeq G^{ab}$. 
By Shapiro's lemma, we also have  
$H^1(G,\bZ[G/H])\simeq H^1(H,\bZ)={\rm Hom}(H,\bZ)=0$
and 
$H^2(G,\bZ[G/H])\simeq H^2(H,\bZ)\simeq H^1(H,\bQ/\bZ)={\rm Hom}(H,\bQ/\bZ)=H^{ab}$.
Then we get 
$0\to H^1(G,J_{G/H})\xrightarrow{\delta} 
G^{ab}={\rm Hom}(G,\bQ/\bZ)\xrightarrow{\varphi} H^{ab}={\rm Hom}(H,\bQ/\bZ)$ 
where $\varphi$ is the restriction map from $G$ to $H$. 
If $f\in {\rm Ker}\,\varphi$, then $f|_H=0$. 
Hence $H\leq {\rm Ker}\,f\lhd G$. 
On the other hand, $H={\rm Stab}_1(G)$ is not normal in $G$ 
and $H$ is a maximal subgroup of $G$ because 
$G=16Tm$ is a primitive subgroup of $S_{16}$. 
This implies that ${\rm Ker}\,f=G$ and ${\rm Ker}\,\varphi=0$. 
Hence $\varphi$ is injective and $H^1(G,J_{G/H})=0$. 
Applying Ono's formula $\tau(T)=|H^1(k,\widehat{T})|/|\Sha(T)|$ with 
$H^1(k,\widehat{T})\simeq H^1(G,J_{G/H})=0$, we get $\tau(T)=1/|\Sha(T)|$.\qed 
%\end{proof}

%%%%%%%%%%%%%%%%%%%%%%%%%%%%%%%%%%%%%%%%%%%%%%%%%%%%%%%%%%%%%%%%%%%%%%%%%%
\section{GAP algorithms}\label{S7}
The GAP algorithms for computing the total obstruction 
${\rm Obs}(K/k)$ and 
the first obstruction ${\rm Obs}_1(L/K/k)$ are given in Hoshi, Kanai and Yamasaki 
\cite[Section 6]{HKY22}, \cite[Section 6]{HKY23}. 
The below is used for the case $G=16T1080$ of the proof of Theorem \ref{thmain2} in Section \ref{S6}. 
The related functions are available 
as in \cite{Norm1ToriHNP}. 
%from\\
%\url{https://www.math.kyoto-u.ac.jp/~yamasaki/Algorithm/Norm1ToriHNP/}.\\
~{}\vspace*{-2mm}\\
{\small 
\begin{verbatim}
AllSubgroups2 := function(G)
    Reset(GlobalMersenneTwister);
    Reset(GlobalRandomSource);
    return AllSubgroups(G);
end;

StemExtensions:= function(G)
    local ScG,ScGg,K,Ks,ans,m,pi,cG,cGg,iso,GG,GGg,Gg,epi,n,i,id;
    ScG:=SchurCoverG(G);
    ScGg:=GeneratorsOfGroup(ScG.SchurCover);
    K:=Kernel(ScG.epi);
    Ks:=AllSubgroups2(K);
    ans:=[];
    for m in Ks do
        pi:=NaturalHomomorphismByNormalSubgroup(ScG.SchurCover,m);
        cG:=Range(pi);
        cGg:=List(ScGg,x->Image(pi,x));
        iso:=IsomorphismPermGroup(Range(pi));
        GG:=Range(iso);
        GGg:=List(cGg,x->Image(iso,x));
        Gg:=List(ScGg,x->Image(ScG.epi,x));
        epi:=GroupHomomorphismByImages(GG,G,GGg,Gg);
        n:=NrMovedPoints(Source(epi));
        if n>=2 and n<=30 and IsTransitive(Source(epi),[1..n]) then
            for i in [1..NrTransitiveGroups(n)] do
                if Order(TransitiveGroup(n,i))=Order(Source(epi)) and
                 IsConjugate(SymmetricGroup(n),
                  TransitiveGroup(n,i),Source(epi)) then
                    id:=[n,i];
                    break;
                fi;
            od;
            Add(ans,rec(StemExtension:=Source(epi), epi:=epi, Tid:=id));
        else
            Add(ans,rec(StemExtension:=Source(epi), epi:=epi));
        fi;
    od;
    return ans;
end;
\end{verbatim}
}

%%%%%%%%%%%%%%%%%%%%%%%%%%%%%%%%%%%%%%%%%%%%%%%%%%%%%%%%%%%%%%%%%%%%%%%%


\begin{thebibliography}{Norm1ToriHNP}
\bibitem[Bar81a]{Bar81a} 
H.-J. Bartels, 
{\it Zur Arithmetik von Konjugationsklassen in algebraischen Gruppen}, 
J. Algebra \textbf{70} (1981) 179--199. 
\bibitem[Bar81b]{Bar81b} 
H.-J. Bartels, 
{\it Zur Arithmetik von Diedergruppenerweiterungen}, 
Math. Ann. \textbf{256} (1981) 465--473.
\bibitem[Bea16]{Bea16} 
A. Beauville, {\it The L\"uroth problem}, 
Rationality problems in algebraic geometry, 1--27, 
Lecture Notes in Math., 2172, 
Fond. CIME/CIME Found. Subser., Springer, Cham, 2016. 
\bibitem[BT82]{BT82} 
F. R. Beyl, J. Tappe, 
{\it Group extensions, representations, and the Schur multiplicator}, 
Lecture Notes in Mathematics, 958, Springer-Verlag, Berlin-New York, 1982. 
\bibitem[Bro82]{Bro82}
K. S. Brown, 
{\it Cohomology of groups}, 
Graduate Texts in Mathematics, 87, Springer-Verlag, New York-Berlin, 1982, x+306 pp.
\bibitem[BM83]{BM83}
G. Butler, J. McKay, 
{\it The transitive groups of degree up to eleven}, 
Comm. Algebra \textbf{11} (1983) 863--911.
\bibitem[CT07]{CT07} 
J.-L. Colliot-Th\'{e}l\`{e}ne, 
{\it Lectures on Linear Algebraic Groups}, 
Beijing lectures, Moning side centre, April 2007, 
\url{https://www.math.u-psud.fr/~colliot/BeijingLectures2Juin07.pdf}. 
\bibitem[CTHS05]{CTHS05} 
J.-L. Colliot-Th\'{e}l\`{e}ne, D. Harari, A. N. Skorobogatov, 
{\it Compactification \'equivariante d'un tore 
(d'apr\`es Brylinski et K\"unnemann)}, 
Expo. Math. \textbf{23} (2005) 161--170.
\bibitem[CTS77]{CTS77} 
J.-L. Colliot-Th\'{e}l\`{e}ne, J.-J. Sansuc, 
{\it La R-\'{e}quivalence sur les tores}, 
Ann. Sci. \'{E}cole Norm. Sup. (4) \textbf{10} (1977) 175--229. 
\bibitem[CTS87]{CTS87} 
J.-L. Colliot-Th\'{e}l\`{e}ne, J.-J. Sansuc, 
{\it Principal homogeneous spaces under flasque tori: Applications}, 
J. Algebra \textbf{106} (1987) 148--205. 
\bibitem[CTS07]{CTS07} 
J.-L. Colliot-Th\'{e}l\`{e}ne, J.-J. Sansuc, 
{\it The rationality problem for fields of invariants under linear 
algebraic groups (with special regards to the Brauer group)}, 
Algebraic groups and homogeneous spaces, 113--186, 
Tata Inst. Fund. Res. Stud. Math., 19, Tata Inst. Fund. Res., Mumbai, 2007.
\bibitem[CTS21]{CTS21} 
J.-L. Colliot-Th\'{e}l\`{e}ne, A. N. Skorobogatov, 
{\it The Brauer-Grothendieck group}, 
Ergeb. Math. Grenzgeb. (3) 71,  
%[Results in Mathematics and Related Areas. 3rd Series. 
%A Series of Modern Surveys in Mathematics]
Springer, Cham, 2021, xv+453 pp.
\bibitem[CK00]{CK00} 
A. Cortella, B. Kunyavskii, 
{\it Rationality problem for generic tori in simple groups}, 
J. Algebra \textbf{225} (2000) 771--793. 
\bibitem[Dra89]{Dra89} 
Yu. A. Drakokhrust, 
{\it On the complete obstruction to the Hasse principle}, (Russian)
Dokl. Akad. Nauk BSSR \textbf{30} (1986) 5--8; 
translation in Amer. Math. Soc. Transl. (2) \textbf{143} (1989) 29--34. 
\bibitem[DP87]{DP87} 
Yu. A. Drakokhrust, V. P. Platonov, 
{\it The Hasse norm principle for algebraic number fields}, (Russian)
Izv. Akad. Nauk SSSR Ser. Mat. \textbf{50} (1986) 946--968; 
translation in Math. USSR-Izv. \textbf{29} (1987) 299--322. 
\bibitem[End11]{End11} 
S. Endo, 
{\it The rationality problem for norm one tori}, 
Nagoya Math. J. \textbf{202} (2011) 83--106. 
%\bibitem[EK17]{EK17} 
%S. Endo, M. Kang, 
%{\it Function fields of algebraic tori revisited}, 
%Asian J. Math. \textbf{21} (2017) 197--224.
\bibitem[EM73]{EM73} 
S. Endo, T. Miyata, 
{\it Invariants of finite abelian groups}, 
J. Math. Soc. Japan \textbf{25} (1973) 7--26. 
\bibitem[EM75]{EM75} 
S. Endo, T. Miyata, 
{\it On a classification of the function fields of algebraic tori}, 
Nagoya Math. J. \textbf{56} (1975) 85--104. 
Corrigenda: Nagoya Math. J. \textbf{79} (1980) 187--190. 
\bibitem[Flo]{Flo} 
M. Florence, 
{\it Non rationality of some norm-one tori}, preprint (2006).
%\bibitem[FLN18]{FLN18}
%C. Frei, D. Loughran, R. Newton, 
%{\it The Hasse norm principle for abelian extensions}, 
%Amer. J. Math. \textbf{140} (2018) 1639--1685.
\bibitem[GAP]{GAP} 
The GAP Group, GAP -- Groups, Algorithms, and Programming, 
%Version 4.4.12; 2008. 
%Version 4.8.10; 2018. 
Version 4.9.3; 2018. (\url{http://www.gap-system.org}).
\bibitem[Ger77]{Ger77} 
F. Gerth III, 
{\it The Hasse norm principle in metacyclic extensions of number fields}, 
J. London Math. Soc. (2) \textbf{16} (1977) 203--208. 
\bibitem[Ger78]{Ger78} 
F. Gerth III, 
{\it The Hasse norm principle in cyclotomic number fields}, 
J. Reine Angew. Math. \textbf{303/304} (1978) 249--252. 
\bibitem[Gur78a]{Gur78a} 
S. Gurak, {\it On the Hasse norm principle}, 
J. Reine Angew. Math. \textbf{299/300} (1978) 16--27.
\bibitem[Gur78b]{Gur78b}
S. Gurak, 
{\it The Hasse norm principle in non-abelian extensions}, 
J. Reine Angew. Math. \textbf{303/304} (1978) 314--318.
\bibitem[Gur80]{Gur80} 
S. Gurak, 
{\it The Hasse norm principle in a compositum of radical extensions}, 
J. London Math. Soc. (2) \textbf{22} (1980) 385--397.
%\bibitem[HAP]{HAP}
%G. Ellis, The GAP package HAP, version 1.12.6, available from
%\url{http://www.gap-system.org/Packages/hap.html}.
\bibitem[HHY20]{HHY20} 
S. Hasegawa, A. Hoshi, A. Yamasaki, 
{\it Rationality problem for norm one tori in small dimensions}, 
Math. Comp. \textbf{89} (2020) 923--940. 
\bibitem[Har20]{Har20} 
D. Harari, {\it Galois cohomology and class field theory}, 
Translated from the 2017 French original by Andrei Yafaev, 
Universitext, Springer, Cham, 2020, xiv+338 pp. 
\bibitem[Has31]{Has31} 
H. Hasse, 
{\it Beweis eines Satzes und Wiederlegung einer Vermutung \"uber das allgemeine Normenrestsymbol}, Nachrichten von der Gesellschaft der Wissenschaften zu G\"ottingen, Mathematisch-Physikalische Klasse (1931) 64--69. 
\bibitem[Hir64]{Hir64} 
H. Hironaka, {\it Resolution of singularities of an algebraic 
variety over a field of characteristic zero. I, II.}, 
Ann. of Math. (2) \textbf{79} (1964) 109--203; 205--326. 
\bibitem[HKY22]{HKY22}
A. Hoshi, K. Kanai, A. Yamasaki, 
{\it Norm one tori and Hasse norm principle}, 
Math. Comp. \textbf{91} (2022) 2431--2458. 
\bibitem[HKY23]{HKY23}
A. Hoshi, K. Kanai, A. Yamasaki, 
{\it Norm one tori and Hasse norm principle, II: Degree $12$ case}, 
J. Number Theory \textbf{244} (2023) 84--110. 
\bibitem[HKY]{HKY} 
A. Hoshi, K. Kanai, A. Yamasaki, 
{\it Hasse norm principle for $M_{11}$ and $J_1$ extensions}, 
arXiv:2210.09119.
%%%%%%%%%%%%%%
%\bibitem[HKK14]{HKK14} 
%A. Hoshi, M. Kang, H. Kitayama, 
%{\it Quasi-monomial actions and some 4-dimensional rationality problems}, 
%J. Algebra \textbf{403} (2014) 363--400. 
\bibitem[HY17]{HY17} 
A. Hoshi, A. Yamasaki, 
{\it Rationality problem for algebraic tori}, 
Mem. Amer. Math. Soc. \textbf{248} (2017) no. 1176, v+215 pp. 
\bibitem[HY21]{HY21} 
A. Hoshi, A. Yamasaki, 
{\it Rationality problem for norm one tori}, 
Israel J. Math. \textbf{241} (2021) 849--867. 
\bibitem[HY24]{HY24}
A. Hoshi, A. Yamasaki, 
{\it Rationality problem for norm one tori for dihedral extensions}, 
J. Algebra \textbf{640} (2024) 368--384. 
\bibitem[HY1]{HY1} 
A. Hoshi, A. Yamasaki, 
{\it Birational classification for algebraic tori}, 
arXiv:2112.02280.  
\bibitem[HY2]{HY2} 
A. Hoshi, A. Yamasaki, 
{\it Rationality problem for norm one tori for $A_5$ and ${\rm PSL}_2(\mathbbm{F}_8)$ extensions}, 
arXiv:2309.16187. 
\bibitem[Hul96]{Hul96} 
A. Hulpke, 
{\it Konstruktion transitiver Permutationsgruppen}, 
Ph.D Thesis. Rheinisch-Westf'\"alische
Technische Hochschule, Aachen, Germany, 1996. 
\bibitem[Hul05]{Hul05} 
A. Hulpke, {\it Constructing Transitive Permutation Groups}, 
J. Symbolic Comput. \textbf{39} (2005) 1--30.
\bibitem[H\"{u}r84]{Hur84} 
W. H\"{u}rlimann, 
{\it On algebraic tori of norm type}, Comment. Math. Helv. 
\textbf{59} (1984) 539--549.
\bibitem[Kan12]{Kan12}
M. Kang, {\it Retract rational fields}, 
J. Algebra \textbf{349} (2012) 22--37.
%\bibitem[Mer08]{Mer08} 
%A. Merkurjev, {\it $R$-equivalence on three-dimensional tori 
%and zero-cycles}, 
%Algebra Number Theory \textbf{2} (2008) 69--89.
\bibitem[Mer17]{Mer17} 
A. S. Merkurjev, 
{\it Invariants of algebraic groups and retract rationality of classifying spaces}, 
Algebraic groups: structure and actions, 277--294. 
Proc. Sympos. Pure Math., 94, American Mathematical Society, Providence, RI, 2017. 
%\bibitem[Kap87]{Kap87} 
%G. Karpilovsky, 
%{\it The Schur multiplier}, 
%London Mathematical Society Monographs. 
%New Series, 2. The Clarendon Press, Oxford University Press, New York, 1987.
\bibitem[KMRT98]{KMRT98} 
M.-A. Knus, A. Merkurjev, M. Rost, J.-P. Tignol, 
{\it The book of involutions}, American Mathematical Society Colloquium Publications, 44, 
American Mathematical Society, Providence, RI, 1998, xxii+593 pp. 
\bibitem[Kun84]{Kun84} 
B. E. Kunyavskii, 
{\it Arithmetic properties of three-dimensional algebraic tori}, (Russian)
Integral lattices and finite linear groups, 
Zap. Nauchn. Sem. Leningrad. Otdel. Mat. Inst. Steklov. (LOMI) \textbf{116} (1982) 102--107, 163; 
translation in J. Soviet Math. \textbf{26} (1984) 1898--1901. 
\bibitem[Kun90]{Kun90}
B. E. Kunyavskii, {\it Three-dimensional algebraic tori},
Selecta Math. Soviet. \textbf{9} (1990) 1--21.
\bibitem[Kun07]{Kun07} 
B. E. Kunyavskii, 
{\it Algebraic tori --- thirty years after}, 
Vestnik Samara State Univ. (2007) 198--214. 
\bibitem[LL00]{LL00} 
N. Lemire, M. Lorenz, 
{\it On certain lattices associated with generic division algebras}, 
J. Group Theory \textbf{3} (2000) 385--405.
\bibitem[LPR06]{LPR06} 
N. Lemire, V. L. Popov, Z. Reichstein, 
{\it Cayley groups}, J. Amer. Math. Soc. \textbf{19} (2006) 921--967. 
\bibitem[LeB95]{LeB95} 
L. Le Bruyn, {\it Generic norm one tori},
Nieuw Arch. Wisk. (4) \textbf{13} (1995) 401--407. 
\bibitem[Lor05]{Lor05} 
M. Lorenz, 
{\it Multiplicative invariant theory}, 
Encyclopaedia Math. Sci., vol. 135, Springer-Verlag, Berlin, 2005. 
\bibitem[Mac20]{Mac20} 
A. Macedo, 
{\it The Hasse norm principle for $A_n$-extensions}, 
J. Number Theory \textbf{211} (2020) 500--512.
\bibitem[MN22]{MN22} 
A. Macedo, R. Newton, 
{\it Explicit methods for the Hasse norm principle and applications to $A_n$ and $S_n$ extensions}, 
Math. Proc. Cambridge Philos. Soc. \textbf{172} (2022) 489--529. 
%\bibitem[Man74]{Man74} 
%Yu. I. Manin, 
%{\it Cubic forms: algebra, geometry, arithmetic}, 
%Translated from the Russian by M. Hazewinkel. 
%North-Holland Mathematical Library, Vol. 4. 
%North-Holland Publishing Co., Amsterdam-London; 
%American Elsevier Publishing Co., New York, 1974, vii+292 pp.
%North-Holland Mathematical Library 4, North-Holland, Amsterdam, 1974.
\bibitem[Man86]{Man86} 
Yu. I. Manin, 
{\it Cubic forms: algebra, geometry, arithmetic},  
Second edition. North-Holland Mathematical Library 4, 
North-Holland Publishing Co., Amsterdam, 1986, x+326 pp.
\bibitem[MT86]{MT86}
Yu. I. Manin, M. A. Tsfasman, 
{\it Rational varieties: algebra, geometry, arithmetic}, (Russian)
Uspekhi Mat. Nauk \textbf{41} (1986) 43--94; 
translation in Russian Math. Surveys \textbf{41} (1986) 51--116. 
\bibitem[Mil86]{Mil86}
Milne, J. S.
{\it Arithmetic duality theorems}, 
Perspect. Math., 1, 
Academic Press, Inc., Boston, MA, 1986, x+421 pp.
%Second edition, 
%BookSurge, LLC, Charleston, SC, 2006. viii+339 pp.
\bibitem[NSW00]{NSW00}
J. Neukirch, A. Schmidt, K. Wingberg, 
{\it Cohomology of number fields}, 
Grundlehren Math. Wiss., 323, Springer-Verlag, Berlin, 2000, xvi+699 pp.
\bibitem[Norm1ToriHNP]{Norm1ToriHNP} 
A. Hoshi, A. Yamasaki, 
Norm1ToriHNP for GAP 4 ver.2024.04.03, 
available at\\ 
\url{https://doi.org/10.57723/289563}, 
\url{http://mathweb.sc.niigata-u.ac.jp/~hoshi/Algorithm/Norm1ToriHNP/}, 
\url{https://www.math.kyoto-u.ac.jp/~yamasaki/Algorithm/Norm1ToriHNP/}. 
\bibitem[Ono61]{Ono61} 
T. Ono, 
{\it Arithmetic of algebraic tori}, 
Ann. of Math. (2) \textbf{74} (1961) 101--139. 
\bibitem[Ono63]{Ono63} 
T. Ono, 
{\it On the Tamagawa number of algebraic tori}, 
Ann. of Math. (2) \textbf{78} (1963) 47--73.
\bibitem[Ono65]{Ono65} 
T. Ono, 
{\it On the relative theory of Tamagawa numbers}, 
Ann. of Math. (2) \textbf{82} (1965) 88--111.
\bibitem[Opo80]{Opo80} 
H. Opolka, 
{\it Zur Aufl\"osung zahlentheoretischer Knoten}, %(German)
Math. Z. \textbf{173} (1980) 95--103. 
\bibitem[Pla82]{Pla82} 
V. P. Platonov, 
{\it Arithmetic theory of algebraic groups}, (Russian)
Uspekhi Mat. Nauk \textbf{37} (1982) 3--54; 
translation in Russian Math. Surveys \textbf{37} (1982) 1--62. 
\bibitem[PD85a]{PD85a} 
V. P. Platonov, Yu. A. Drakokhrust, 
{\it On the Hasse principle for algebraic number fields}, (Russian) 
Dokl. Akad. Nauk SSSR \textbf{281} (1985) 793--797; 
translation in  Soviet Math. Dokl. \textbf{31} (1985) 349--353.
\bibitem[PD85b]{PD85b} 
V. P. Platonov, Yu. A. Drakokhrust, 
{\it The Hasse norm principle for primary extensions of algebraic number fields}, (Russian) 
Dokl. Akad. Nauk SSSR \textbf{285} (1985) 812--815; 
translation in Soviet Math. Dokl. \textbf{32} (1985) 789--792.
\bibitem[PR94]{PR94} 
V. P. Platonov, A. Rapinchuk, 
{\it Algebraic groups and number theory}, 
Translated from the 1991 Russian original by Rachel Rowen, 
Pure and applied mathematics, 139, Academic Press, 1994. 
\bibitem[Rob96]{Rob96} 
D. J. S. Robinson, 
{\it A course in the theory of groups}, Second edition. 
Graduate Texts in Mathematics, 80, Springer-Verlag, New York, 1996. 
\bibitem[Sal84]{Sal84} 
D. J. Saltman, 
{\it Retract rational fields and cyclic Galois extensions}, 
Israel J. Math. \textbf{47} (1984) 165--215. 
\bibitem[Sal99]{Sal99} 
D. J. Saltman, 
{\it Lectures on division algebras}, 
CBMS Regional Conference Series in Mathematics, 94, Published by American Mathematical Society, 
Providence, RI; on behalf of Conference Board of the Mathematical Sciences, Washington, DC, 1999, viii+120 pp. 
\bibitem[San81]{San81} 
J.-J. Sansuc, 
{\it Groupe de Brauer et arithm\'etique des groupes alg\'ebriques lin\'eaires sur un corps de nombres}, (French) 
J. Reine Angew. Math. \textbf{327} (1981) 12--80.
\bibitem[Swa83]{Swa83} 
R. G. Swan, 
{\it Noether's problem in Galois theory}, 
Emmy Noether in Bryn Mawr (Bryn Mawr, Pa., 1982), 21--40, Springer, 
New York-Berlin, 1983. 
\bibitem[Swa10]{Swa10} 
R. G. Swan, 
{\it The flabby class group of a finite cyclic group}, 
Fourth International Congress of Chinese Mathematicians, 259--269, 
AMS/IP Stud. Adv. Math., 48, Amer. Math. Soc., Providence, RI, 2010. 
\bibitem[Tat67]{Tat67} 
J. Tate, 
{\it Global class field theory}, 
Algebraic Number Theory (Proc. Instructional Conf., Brighton, 1965), 
162--203, Thompson, Washington, D.C., 1967.  
\bibitem[Vos67]{Vos67} 
V. E. Voskresenskii, 
{\it On two-dimensional algebraic tori II}, (Russian) 
Izv. Akad. Nauk SSSR Ser. Mat. \textbf{31} (1967) 711--716; 
translation in Math. USSR-Izv. \textbf{1} (1967) 691--696.
\bibitem[Vos69]{Vos69} 
V. E. Voskresenskii, 
{\it The birational equivalence of linear algebraic groups}, (Russian) 
Dokl. Akad. Nauk SSSR \textbf{188} (1969) 978--981; 
erratum, ibid. 191 1969 nos., 1, 2, 3, vii; 
translation in Soviet Math. Dokl. \textbf{10} (1969) 1212--1215. 
\bibitem[Vos70]{Vos70} 
V. E. Voskresenskii, 
{\it Birational properties of linear algebraic groups}, (Russian) 
Izv. Akad. Nauk SSSR Ser. Mat. \textbf{34} (1970) 3--19; 
translation in Math. USSR-Izv. \textbf{4} (1970) 1--17.
\bibitem[Vos74]{Vos74} 
V. E. Voskresenskii, 
{\it Stable equivalence of algebraic tori}, (Russian) 
Izv. Akad. Nauk SSSR Ser. Mat. \textbf{38} (1974) 3--10; 
translation in Math. USSR-Izv. \textbf{8} (1974) 1--7.
%\bibitem[Vos83]{Vos83} 
%V. E. Voskresenskii, 
%{\it Projective invariant Demazure models}, (Russian)
%Izv. Akad. Nauk SSSR Ser. Mat. \textbf{46} (1982) 195--210, 431; 
%translation in Math USSR-Izv. \textbf{20} (1983) 189--202.
\bibitem[Vos88]{Vos88} 
V. E. Voskresenskii, 
{\it Maximal tori without affect in semisimple algebraic groups}, (Russian) 
Mat. Zametki 44 (1988) 309--318; 
translation in Math. Notes \textbf{44} (1988) 651--655. 
\bibitem[Vos98]{Vos98} 
V. E. Voskresenskii, 
{\it Algebraic groups and their birational invariants}, 
Translated from the Russian manuscript by Boris Kunyavskii, 
Translations of Mathematical Monographs, 179. 
American Mathematical Society, Providence, RI, 1998.
\bibitem[VK84]{VK84} 
V. E. Voskresenskii,  B. E. Kunyavskii, 
{\it Maximal tori in semisimple algebraic groups}, 
Kuibyshev State Inst., Kuibyshev (1984). 
Deposited in VINITI March 5, 1984, No. 1269-84 Dep. 
(Ref. Zh. Mat. (1984), 7A405 Dep.).
\bibitem[Yam12]{Yam12} 
A. Yamasaki, 
{\it Negative solutions to three-dimensional monomial Noether problem},
J. Algebra \textbf{370} (2012) 46--78.
\end{thebibliography}
\end{document}